\documentclass[11pt,a4paper]{article}

\usepackage[utf8]{inputenc}
\usepackage[T1]{fontenc}
\usepackage{amsmath, amssymb, amsthm, mathrsfs}
\usepackage{geometry}
\newtheorem{theorem}{Theorem}[section]
\newtheorem{lemma}[theorem]{Lemma}
\newtheorem{assumption}{Assumption}

\theoremstyle{remark}
\newtheorem{remark}{Remark}

\newcommand{\R}{\mathbf{R}}
\newcommand{\C}{\mathbf{C}}
\newcommand{\Sp}{\mathrm{S}}
\newcommand{\F}{\mathscr{F}}
\newcommand{\rd}{\mathrm{d}}
\newcommand{\norm}[1]{\left\| #1 \right\|}

\begin{document}

\title{Global $L^p$ Second Commutation Lemma}
\author{
Marin Mi\v{s}ur\thanks{Marin Mi\v{s}ur,
University of Zagreb, Faculty of Science, Bijeni\v{c}ka cesta 30,
10000 Zagreb, Croatia.
    \texttt{mmisur@math.hr}}
}
\date{}
\maketitle

\begin{abstract}
We prove the second commutation lemma for the Lebesgue spaces $L^p(\R^d)$, $1 < p < \infty$, on the whole unbounded
domain, extending the $L^2$ theory of Tartar (1990) to the Banach-space framework of H-distributions. Unlike the $L^2$
setting, where the Plancherel isometry and Hilbert-space compactness are available, the global $L^p$ setting has neither.
We control the non-local tail through Calder\'on--Zygmund kernel estimates, an explicit Taylor-remainder identity, and
spatial truncation, obtaining order-$(-\epsilon)$ smoothing of the remainder into the Besov scale. Pairing weakly convergent
sequences with their canonical Nemyckij duals, we then use the lemma to derive $L^p$ transport equations. For first-order
scalar equations we establish the phase-space bicharacteristic (Vlasov) flow of the associated H-distribution when
$p \ge 2$; for the quasilinear $p$-wave system we lift the local energy identity to the microlocal level, obtaining a
Poynting-flux transport of microlocal energy in the linear core $p = 2$ and isolating the structural obstruction to its
closure when $p \neq 2$. These results provide functional-analytic tools for tracking the propagation of singularities in
degenerate nonlinear and non-local partial differential equations.
\end{abstract}

\medskip
\noindent\textbf{Keywords:} H-distributions; second commutation lemma; Fourier multipliers; commutators; Poisson bracket; bicharacteristic transport; Besov spaces.

\medskip
\noindent\textbf{2020 Mathematics Subject Classification:} Primary 35S05, 42B20; Secondary 35A27, 35B27, 42B15, 46E35, 47B07, 35Q83, 35L05.

\section{Introduction and Functional Setting}\label{sec:intro}

The second commutation lemma is the analytic tool underlying the propagation principles of H-distributions (and their $L^2$ predecessors, H-measures).
The $L^2$ H-measures were originally developed by Tartar \cite{Tartar1990} and further expanded in his later syntheses on propagation effects \cite{Tartar2017},
while H-distributions were introduced by Antoni\'c and Mitrovi\'c \cite{AntonicMitrovic2011} as their extension to the $L^p$--$L^q$ setting, and have since
found applications to compensated compactness \cite{MisurMitrovic2015} and velocity averaging \cite{LazarMitrovic2012, ErcegMisurMitrovic2023}. 

To define the microlocal energy of sequences in reflexive Banach spaces $L^p(\R^d)$ ($1 < p < \infty$), we must go beyond the $L^2$ restriction of classical
H-measures. The existence of H-distributions in this $L^p$--$L^q$ setting is due to Antoni\'c and Mitrovi\'c \cite{AntonicMitrovic2011}, subsequently refined
by Antoni\'c, Erceg, and Mi\v{s}ur \cite{AntonicErcegMisur2021} to locally weakly convergent sequences and to distributions of anisotropic order; these objects
underlie the $L^p$ commutation theory (see also \cite{Misur2017} for a systematic treatment, \cite{MisurPalle2025}
for further properties of the underlying anisotropic distributions, and \cite{Misur2019} for an application of anisotropic
distributions to a refinement of Peetre's theorem). Whereas that work established the existence and localisation of these objects,
the present paper supplies the second commutation lemma (the identification of the commutator's principal symbol as a Poisson bracket, together
with the global compactness of its remainder) and uses it to derive microlocal transport equations. The commutation lemma itself holds for the full range
$1 < p < \infty$; the transport applications additionally require the pointwise chain rule for the Nemyckij dual, and are therefore proved for $p \ge 2$ in the
first-order scalar case (Section \ref{sec:applications}) and, in the wave case, for the linear core $p = 2$ (Section \ref{sec:wave_eq}). Throughout, the spatial
dimension is $d \ge 2$, so that $\Sp^{d-1}$ is a genuine sphere.

We delineate at the outset what is new here and what is recovered, since the two are deliberately kept distinct. The novel contributions are the global
$L^p$ second commutation lemma (Theorem \ref{thm:second_comm}), namely the identification of the principal symbol as a Poisson bracket together with the compactness
of the remainder on the whole of $\R^d$, obtained without the Plancherel isometry through the truncation argument of Section \ref{sec:theorem_proof}, and the
quantitative order-$(-\epsilon)$ smoothing of that remainder into the Besov scale (Lemma \ref{lem:remainder}, Appendix \ref{app:remainder}). Its principal
application, where the lemma is needed at $p \neq 2$, is the $L^p$ bicharacteristic transport (Vlasov) equation for first-order scalar PDEs
(Theorem \ref{thm:bicharacteristic_first_order}). By contrast, the wave results of Section \ref{sec:wave_eq} are confined to the linear core $p = 2$, where the
H-distribution framework collapses to a classical Tartar H-measure and the commutation lemma is not needed for the constant-coefficient statement
(Theorem \ref{thm:bicharacteristic_pwave}); we include them to fix notation, to exhibit the lemma performing the variable-coefficient refraction computation
(Theorem \ref{thm:wave-variable}), and to isolate, rather than resolve, the structural obstruction to closure when $p \neq 2$
(Remark \ref{rem:nonlinear-obstruction}). We make no claim of novelty for these $L^2$ statements.

\begin{theorem}[Existence of H-distributions \cite{AntonicMitrovic2011, AntonicErcegMisur2021}]\label{thm:h_dist_existence}
Let $1 < p < \infty$ and $p' = p/(p-1)$. Assume that $u_n \rightharpoonup 0$ weakly in $L_{loc}^p(\R^d)$ and $v_n \rightharpoonup 0$ weakly in $L_{loc}^{p'}(\R^d)$.
Then there exist a subsequence and a distribution $\mu \in \mathcal{D}'(\R^d \times \Sp^{d-1})$, uniquely determined by that subsequence, called the H-distribution
associated with the pair $(u_n, v_n)$, such that for all test functions $\varphi_1, \varphi_2 \in C_c^\infty(\R^d)$ and any smooth Fourier multiplier symbol
$a \in C^\infty(\Sp^{d-1})$, the following limit holds:
\begin{equation*}
    \lim_{n \to \infty} \int_{\R^d} \varphi_1(x) u_n(x) \mathcal{A}_a(\varphi_2 v_n)(x) \, \rd x = \langle \mu, \varphi_1 \varphi_2 a \rangle.
\end{equation*}
\end{theorem}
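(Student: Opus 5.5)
The plan is the classical bilinear-form-plus-diagonal-extraction argument of Antoni\'c and Mitrovi\'c, run so that the limit is a distribution of finite order in the symbol variable. For fixed $\varphi_1,\varphi_2\in C_c^\infty(\R^d)$ and $a\in C^\infty(\Sp^{d-1})$, set
\[
B_n(\varphi_1,\varphi_2,a)=\int_{\R^d}\varphi_1 u_n\,\mathcal{A}_a(\varphi_2 v_n)\,\rd x .
\]
The Fourier multiplier $\mathcal{A}_a$ has symbol $a(\xi/|\xi|)$, which is homogeneous of degree zero and smooth away from the origin. By the Mikhlin--H\"ormander theorem it is bounded on $L^{p'}(\R^d)$, with operator norm at most $C\,\|a\|_{C^{\kappa}(\Sp^{d-1})}$ for $\kappa=\lfloor d/2\rfloor+1$. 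Fix compact sets $K_1\supset\operatorname{supp}\varphi_1$ and $K_2\supset\operatorname{supp}\varphi_2$. Weak convergence in $L^p_{loc}$ and $L^{p'}_{loc}$ gives uniform bounds $\|u_n\|_{L^p(K_1)}\le C$ and $\|v_n\|_{L^{p'}(K_2)}\le C$ by the uniform boundedness principle. Hölder's inequality then yields
\[
|B_n(\varphi_1,\varphi_2,a)|\le C_{K_1,K_2}\,\|\varphi_1\|_{L^\infty}\|\varphi_2\|_{L^\infty}\|a\|_{C^\kappa(\Sp^{d-1})},
\]
uniformly in $n$.

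Next I extract a convergent subsequence by a diagonal argument over countable dense sets. Choose a countable set in $C_c^\infty(\R^d)$ that is dense in $C_c(K_j)$ in the sup norm for an exhausting sequence of balls $K_j$, and a countable set in $C^\infty(\Sp^{d-1})$ dense in the $C^\kappa$ norm. The uniform bound is trilinear and continuous in these norms, so pointwise convergence on the countable set, obtained by Cantor diagonalisation, extends to all triples by an $\varepsilon/3$ argument. This defines a limit trilinear form $B(\varphi_1,\varphi_2,a)$, bounded by $\|\varphi_1\|_{L^\infty}\|\varphi_2\|_{L^\infty}\|a\|_{C^\kappa}$ on each compact set.

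The main obstacle is showing that $B$ depends on $(\varphi_1,\varphi_2)$ only through the product $\varphi_1\varphi_2$, and that the resulting form extends to a genuine distribution on $\R^d\times\Sp^{d-1}$. For the first point I use the \emph{first} commutation lemma. For $b\in C_c^\infty(\R^d)$, the commutator $[\mathcal{A}_a,b]$ is compact on $L^{p'}(\R^d)$; this is proved via Cordes/Calder\'on commutator estimates and is classical. Consequently $\mathcal{A}_a(\varphi_2 v_n)-\varphi_2\mathcal{A}_a(\psi v_n)\to0$ strongly in $L^{p'}_{loc}$, where $\psi\in C_c^\infty$ equals $1$ on $\operatorname{supp}\varphi_2$. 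Hence $B(\varphi_1,\varphi_2,a)=B(\varphi_1\varphi_2,\psi,a)$, which depends only on $\varphi_1\varphi_2$ since $\psi$ may be chosen uniformly on compact sets.

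For the second point, the resulting bilinear form $(\varphi,a)\mapsto B(\varphi,\psi,a)$ is separately continuous on $C_c^\infty(\R^d)\times C^\infty(\Sp^{d-1})$. The Schwartz kernel theorem then gives a unique $\mu\in\mathcal{D}'(\R^d\times\Sp^{d-1})$ with $\langle\mu,\varphi\otimes a\rangle=B(\varphi,\psi,a)$. Since tensor products span a dense subspace, $\mu$ is determined by the limits, which gives uniqueness for the chosen subsequence. Because the bound involves only $\kappa$ derivatives in $\xi$ and none in $x$, the distribution is in fact of order $0$ in $x$ and finite order in $\xi$, consistent with the anisotropic refinement of \cite{AntonicErcegMisur2021}.
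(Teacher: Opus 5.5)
Your proposal is correct. It follows the same route the paper relies on: the paper cites \cite{AntonicMitrovic2011, AntonicErcegMisur2021} for this theorem and runs the same scheme in Steps~2--3 of the proof of Lemma~\ref{lem:evolution}, namely a uniform H\"ormander--Mikhlin bound, diagonal extraction of a bounded trilinear form, factorisation through $\varphi_1\varphi_2$ via compactness of the first commutator $[\mathcal{A}_a,M_\eta]$ on $L^{p'}$, and a kernel-theorem step.

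Two small points, neither of which affects the statement. First, independence of the auxiliary cutoff $\psi$ should be checked explicitly, for instance by applying your identity once more with a cutoff equal to $1$ on the supports of both candidates. Second, order $0$ in $x$ is not delivered by the plain Schwartz kernel theorem but by the anisotropic kernel theorem of \cite{AntonicErcegMisur2021}, which raises the sphere order from $\kappa$ to at most $d(\kappa+2)$.
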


\begin{remark}[Matrix-Valued H-Distributions]
While Theorem \ref{thm:h_dist_existence} is stated for scalar sequences as established by Antoni\'c, Erceg, and Mi\v{s}ur \cite{AntonicErcegMisur2021},
it extends to vector-valued sequences via component-wise application. For sequences
$\mathbf{u}_n \in L_{loc}^p(\R^d; \R^k)$ and $\mathbf{v}_n \in L_{loc}^{p'}(\R^d; \R^m)$, the associated H-distribution $\boldsymbol{\mu}$ is a matrix-valued
distribution in $\mathcal{D}'(\R^d \times \Sp^{d-1}; \R^{k \times m})$, where each entry $\mu_{ij}$ is the scalar H-distribution generated by the pair
$(u_n^i, v_n^j)$; a single subsequence, extracted by a finite diagonal argument, serves all $k \times m$ pairs simultaneously. We use this tensorial
structure in Section \ref{sec:wave_eq} to analyse coupled quasilinear systems.
\end{remark}

Because H-distributions lack the positivity of $L^2$ H-measures (which are Radon measures), they require careful functional analytic treatment.
Establishing propagation rules for these distributions relies on the second commutation lemma, whose analytic core, the compactness of commutators of
multiplication operators with Fourier multipliers, has been studied in \cite{AntonicMisurMitrovic2018, MisurMitrovic2019}. Establishing this lemma in
domains lacking the Plancherel isometry and the compactness tools of Hilbert space requires substantial technical work, comparable in effort
to other departures from the isotropic $L^2$ theory, such as the parabolic H-measures of Antoni\'c and Lazar \cite{AntonicLazar2013}, which
(while remaining within the $L^2$/Plancherel framework) replace the Euclidean sphere by a parabolically scaled manifold. 

To guarantee boundedness and compactness on all of $L^p(\R^d)$ ($1 < p < \infty$), we replace the low-regularity $L^2$ multiplier assumptions with mild
fractional regularity conditions governed by Calder\'on-Zygmund theory.

We consider the Fourier multiplier operator $\mathcal{A}_a$ defined by $\F(\mathcal{A}_a u)(\xi) = a(\xi)\F u(\xi)$, imposing the following conditions:

\begin{assumption}[Symbol Regularity]\label{ass:symbol}
Let $a: \R^d \setminus \{0\} \to \C$ be a homogeneous function of degree zero. We assume the restriction of $a$ to the unit sphere satisfies
$a \in W^{s,2}(\Sp^{d-1})$ for real $s > d+2$.
\end{assumption}

\begin{remark}[On the Sobolev Regularity Threshold]\label{rem:sobolev}
The threshold $s > d+2$ is dictated not by the $L^p$-boundedness of $\mathcal{A}_a$, but by the pointwise Calder\'on--Zygmund kernel bounds on which our
elementary proof relies; two distinct regularity budgets must be kept apart.

\emph{Boundedness budget.} Mere $L^p$-boundedness of $\mathcal{A}_a$ (and of the auxiliary singular integrals $\operatorname{Op}(\xi_j\partial_{\xi_k}a)$
defining the principal symbol $S_j$) follows from the H\"ormander--Mikhlin theorem, which for a symbol homogeneous of degree zero requires only
$s_0 > \frac{d}{2}$ derivatives on the sphere: the radial variable being inert, the full $\R^d$ Mikhlin condition reduces to
$a|_{\Sp^{d-1}} \in W^{s_0,2}(\Sp^{d-1})$ with the same exponent $s_0 > \frac{d}{2}$ (the smaller value $\frac{d-1}{2}$ is only the embedding threshold
$W^{s_0,2}(\Sp^{d-1}) \hookrightarrow L^\infty$, sufficient for $L^2$ boundedness via Plancherel but not for $L^p$). This budget alone would suggest a
threshold near $s > \frac{d+4}{2}$, obtained by adding to $\frac{d}{2}$ the two orders consumed by the second-order Taylor remainder of the commutator.

\emph{Kernel budget (binding).} However, Part I and the kernel identity of Lemma \ref{lem:remainder} (Appendix \ref{app:remainder}) use the pointwise
bounds $|\partial_j k(z)| \le C|z|^{-d-1}$ and $|\nabla\partial_j k(z)| \le C|z|^{-d-2}$ on the kernel of $\partial_j\mathcal{A}_a$. Passing from Sobolev
regularity of the symbol to pointwise kernel-derivative bounds incurs a dimension-dependent loss that the boundedness budget ignores: by the Bochner--Stein--Weiss
identity on spherical harmonics (Appendix \ref{app:kernel}), the symbol-to-kernel transfer for a kernel of degree $-d-1$ costs $\tfrac{d}{2} + 1$ derivatives in
the $W^{\sigma,2}(\Sp^{d-1})$ scale, so the size bound requires $s > d + \tfrac{1}{2}$ and the gradient bound $s > d + \tfrac{3}{2}$. The clean sufficient
condition $s > d+2$ guarantees both in every dimension $d \ge 2$, with roughly half a derivative to spare.
\end{remark}

\begin{remark}[A sharper threshold via Littlewood--Paley]\label{rem:sharper}
The requirement $s > d+2$ is an artefact of our deliberately elementary, kernel-based proof of Lemma \ref{lem:remainder}, and is not sharp.
A frequency-space argument dispenses with pointwise kernel bounds altogether: decomposing the commutator $[\mathcal{A}_a, M_b]$ through the Bony
paraproduct calculus, bounding each dyadic block of $\mathcal{A}_a$ by the H\"ormander--Mikhlin theorem (which costs only $\frac{d}{2}$ derivatives),
and extracting the geometric gain $2^{-\epsilon l}$ across dyadic scales $l$ from the H\"older regularity $\nabla b \in C^\epsilon$, would establish
the same order-$(-\epsilon)$ smoothing under the much weaker hypothesis $s > \frac{d+4}{2}$, the H\"ormander base plus the two orders consumed by the
second-order Taylor remainder. We do not pursue this route: it would require importing the full Littlewood--Paley and paraproduct apparatus, sacrificing
the self-contained character of Appendix \ref{app:remainder}, whereas the symbols arising in all of our applications are $C^\infty(\Sp^{d-1})$, so the
precise value of $s$ is immaterial downstream. We record the sharper threshold only to locate the true regularity demand of the second commutation lemma.
\end{remark}

Throughout, $C_0(\R^d)$ denotes the continuous functions vanishing at infinity in the supremum norm, and $C^\epsilon(\R^d)$ the bounded, uniformly
$\epsilon$-H\"older functions, normed by $\norm{\cdot}_{C^\epsilon(\R^d)} = \norm{\cdot}_{L^\infty} + [\,\cdot\,]_{C^\epsilon}$. Our spatial decay
hypotheses require only that the relevant gradient lie in $C_0(\R^d)$, vanishing at infinity in supremum norm, while remaining uniformly H\"older;
we do not require the H\"older seminorm itself to vanish at infinity (the stronger little-H\"older condition $C_0^\epsilon(\R^d)$, the closure
of $C_c^\infty(\R^d)$ in $\norm{\cdot}_{C^\epsilon(\R^d)}$).

\begin{assumption}[Global Spatial Regularity]\label{ass:spatial_reg}
Let $b \in C^{1,\epsilon}(\R^d)$ for some $\epsilon > 0$. We assume its weak gradient vanishes at infinity in supremum norm, namely
$\nabla b \in C_0(\R^d; \R^d)$. Here $C^{1,\epsilon}$ constrains only the gradient (bounded and uniformly $\epsilon$-H\"older);
no bound on $b$ itself is imposed (its sublinear growth is used in Step~2 of Theorem \ref{thm:second_comm}), and global
boundedness of $b$ is added separately, where needed, as Assumption \ref{ass:spatial_bound}. Only sup-norm decay of $\nabla b$ is required:
the H\"older seminorm $[\nabla b]_\epsilon$ need only be globally bounded, not vanishing at infinity, a strict weakening of the little-H\"older
hypothesis $\nabla b \in C_0^\epsilon$ that the compactness argument would classically invoke (cf.\ Steps~2--3 of the proof of Theorem \ref{thm:second_comm},
where an operator bound in the supremum norm of $\nabla b$ makes sup-norm decay suffice).
\end{assumption}

For the finite-difference (Besov) arguments of Lemmas \ref{lem:remainder} and \ref{lem:building} we take, without loss of generality, $0 < \epsilon < 1$:
a bounded, uniformly $\epsilon$-H\"older function with $\epsilon > 1$ is constant, while any $\epsilon \ge 1$ may be lowered to an arbitrary
$\epsilon'' \in (0,1)$, since a bounded uniformly $\epsilon$-H\"older function is uniformly $\epsilon''$-H\"older for every $\epsilon'' \le \epsilon$
(interpolating the difference quotient against the $L^\infty$ bound at unit length scale), while the sup-norm decay $\nabla b \in C_0$ is preserved.

\section{Theorem Statement and Proof}\label{sec:theorem_proof}

\begin{theorem}[Global $L^p$ Second Commutation Lemma]\label{thm:second_comm}
Under Assumptions \ref{ass:symbol} and \ref{ass:spatial_reg}, the following hold:
\begin{enumerate}
    \item \textbf{Sobolev Mapping:} The commutator $[\mathcal{A}_a, M_b]$ maps $L^p(\R^d)$ boundedly into $\dot{W}^{1,p}(\R^d)$.
    \item \textbf{Principal Symbol and Compactness:} The weak spatial derivative $\partial_j [\mathcal{A}_a, M_b]$ can be decomposed as
    \begin{equation*}
        \partial_j [\mathcal{A}_a, M_b] = S_j + R_j
    \end{equation*}
    where $S_j = \operatorname{Op}\left( \xi_j \sum_{k=1}^d \frac{\partial a}{\partial \xi_k}(\xi) \frac{\partial b}{\partial x_k}(x) \right)$ is the bounded
    pseudo-differential operator defined by the principal symbol, and the remainder $R_j$ is a compact operator on $L^p(\R^d)$.
\end{enumerate}
\end{theorem}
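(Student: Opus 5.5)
We work at the level of kernels. Let $k$ denote the Calder\'on--Zygmund kernel of $\mathcal{A}_a$, so that modulo the constant $\hat a$-mean term (which commutes with $M_b$) we have
\begin{equation*}
[\mathcal{A}_a, M_b]u(x) = \mathrm{p.v.}\int_{\R^d} k(x-y)\bigl(b(y)-b(x)\bigr)u(y)\,\rd y.
\end{equation*}
Differentiating in $x_j$ gives two contributions: $-\partial_j b(x)\,\mathcal{A}_a u(x)$, and the integral operator with kernel $\partial_j k(x-y)(b(y)-b(x))$.

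\textbf{Part 1.} In the second contribution we write $b(y)-b(x)=\nabla b(x)\cdot(y-x)+r(x,y)$, where by Assumption \ref{ass:spatial_reg} the remainder satisfies $|r(x,y)|\le [\nabla b]_\epsilon |x-y|^{1+\epsilon}$. The linear part yields $\sum_k \partial_k b(x)\,\mathrm{Op}(z_k\partial_j k)$. By homogeneity, $z_k\partial_j k$ is a degree $-d$ Calder\'on--Zygmund kernel whose symbol is, up to sign, $\partial_{\xi_k}(\xi_j a)$. After we combine it with the term $-\partial_j b\,\mathcal{A}_a$ (which cancels the $\delta_{jk}a$ piece), this part becomes exactly $S_j$. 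Boundedness of $S_j$ on $L^p$ follows from the H\"ormander--Mikhlin theorem applied to $\xi_j\partial_k a$, together with $\nabla b\in L^\infty$. The Taylor remainder gives $R_j$, whose kernel $K(x,y)=\partial_j k(x-y)r(x,y)$ obeys $|K|\lesssim |x-y|^{-d+\epsilon}$ near the diagonal, using the bound $|\partial_j k(z)|\le C|z|^{-d-1}$ from Remark \ref{rem:sobolev}. For $|x-y|\ge 1$ we instead use the Lipschitz bound $|b(y)-b(x)-\nabla b(x)\cdot(y-x)|\le 2\|\nabla b\|_\infty|x-y|$. This yields a kernel of size $|x-y|^{-d}$, which is not integrable at infinity. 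Consequently the far part must be handled as a singular integral and cannot be treated by Schur's test. I would split the far part as $\partial_jk\,(b(y)-b(x))$ minus the linear term; each piece is then a Calder\'on--Zygmund operator with a standard smoothness estimate, using $|\nabla\partial_jk|\le C|z|^{-d-2}$. This gives $L^p$ boundedness of $\partial_j[\mathcal{A}_a,M_b]$ for every $j$, hence Part 1.

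\textbf{Compactness of $R_j$.} This is the main obstacle, since on $\R^d$ there is neither local compactness at infinity nor Plancherel. The plan is to approximate $R_j$ in operator norm by compact operators. First, the order-$(-\epsilon)$ gain should hold, namely that $R_j$ maps $L^p$ into the Besov space $B^{\epsilon'}_{p,\infty}$ for $\epsilon'<\epsilon$. This is proved by estimating finite differences $\tau_h R_j u - R_j u$ with the kernel bounds, splitting the integral at $|x-y|\sim|h|$; this is Lemma \ref{lem:remainder}. Next, fix $\chi_R\in C_c^\infty$ with $\chi_R=1$ on $B_R$. By the Besov smoothing and Rellich--Kolmogorov, $M_{\chi_R}R_j$ is compact.

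It then remains to show that $\|(1-\chi_R)R_j\|\to0$. Here we use $\nabla b\in C_0$. For $|x|\ge R$ we rewrite $R_j$ via the representation $b(y)-b(x)=\int_0^1\nabla b(x+t(y-x))\cdot(y-x)\,\rd t$. The near-diagonal part, $|x-y|\le\rho$, is controlled by $[\nabla b]_\epsilon\rho^\epsilon$, which is small uniformly in $x$. The intermediate region involves only values of $\nabla b$ on $\{|z|\ge R-\rho\}$, where $\nabla b$ is small in supremum norm; there the operator bounds depend only on $\sup|\nabla b|$, exactly as noted in Assumption \ref{ass:spatial_reg}. The genuine difficulty is the region where $y$ is far from $x$ and near the origin. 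There I would use sublinear growth of $b$, in the form $|b(y)-b(x)|=o(|x-y|)+O(1)$ for $x\to\infty$, which follows from $\nabla b\in C_0$, together with the decay $|\partial_jk(x-y)|\lesssim|x|^{-d-1}$. This makes the tail a Schur-bounded operator with small norm. Choosing first $\rho$ small and then $R$ large gives $\|R_j - M_{\chi_R}R_j\|\to0$, so $R_j$ is compact as a norm limit of compact operators.
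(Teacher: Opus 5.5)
There is a genuine gap already in Part~1. Size and smoothness bounds make $\partial_jk(x-y)\bigl(b(y)-b(x)\bigr)$ a standard kernel. A standard kernel does not by itself define a bounded operator: you must also supply $L^2$-boundedness, or the cancellation hypotheses of the $T(1)$ theorem. For this kernel that is exactly Calder\'on's commutator theorem (Coifman--Meyer or $T(1)$ in the generality needed), which is the deep input the paper invokes. Calling each far piece ``a Calder\'on--Zygmund operator'' presupposes it.

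Your near/far split does not avoid this either. $S_j$, the near part of $R_j$ (by Schur) and the truncated linear term (by Cotlar) are bounded independently. Hence the truncated commutator in the far part is bounded if and only if $\partial_j[\mathcal{A}_a,M_b]$ is, and the argument is circular. Once Calder\'on's theorem is cited, Part~1 follows at once from $\partial_j[\mathcal{A}_a,M_b]=[\mathcal{A}_a\partial_j,M_b]-(\partial_jb)\mathcal{A}_a$. Its quantitative form, $\norm{\partial_j[\mathcal{A}_a,M_\beta]}_{\mathcal{L}(L^p)}\le C\norm{\nabla\beta}_{L^\infty}$ for every Lipschitz $\beta$, then gives $\norm{R_j(\beta)}_{\mathcal{L}(L^p)}\le C\norm{\nabla\beta}_{L^\infty}$. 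That is the estimate on which compactness turns.

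Cutting off the output by $\chi_R$ rather than truncating $b$ is a legitimate alternative, and $\norm{(1-\chi_R)R_j}_{\mathcal{L}(L^p)}\to0$ is in fact true, but your kernel estimates do not prove it. Take $|x|\ge R$, $|x-y|>\rho$ and $y$ away from the origin. There the only pointwise bound is $|\partial_jk(x-y)\,r(x,y)|\lesssim\delta_R|x-y|^{-d}$, with $\delta_R=\sup_{|z|\ge R/2}|\nabla b(z)|$, and this is not integrable as $|x-y|\to\infty$. Smallness there must come from cancellation. The sup-norm bound above applies to a whole operator $R_j(\beta)$, not to the kernel of $R_j(b)$ restricted to a region cut out by separate conditions on $x$ and on $y$. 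Also, the segment $[x,y]$ can cross $B_R$ with both endpoints outside it (e.g.\ $y\approx-x$), so it is false that only values of $\nabla b$ on $\{|z|\ge R-\rho\}$ enter.

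The missing idea is to globalise through an auxiliary multiplier, which is the paper's route. With $b_n=(b-b(0))\chi_n$, sublinear growth gives $\norm{\nabla(b-b_n)}_{L^\infty}\to0$, hence $\norm{R_j(b)-R_j(b_n)}_{\mathcal{L}(L^p)}\to0$. Meanwhile $R_j(b_n)$ is compact, because off a neighbourhood of $\operatorname{supp}b_n$ its kernel reduces to $\partial_jk(x-y)\,b_n(y)$, a Hille--Tamarkin tail.

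Your ``far from $x$, near the origin'' piece is also not Schur-bounded as claimed. The terms $b(x)$ and $\nabla b(x)\cdot(y-x)$ in $r(x,y)$ contribute a kernel of size $\eta_R|x|^{-d}$ with $\eta_R\to0$, and $\int_{|x|\ge R}|x|^{-d}\,\rd x=\infty$. A mixed $L^p_x(L^{p'}_y)$ bound over a fixed ball in $y$ is needed instead. With these repairs your output cut-off reduces to the paper's Steps~1--3.
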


The compactness assertion of Part II rests on the following quantitative smoothing estimate for the remainder, which isolates our single use of the non-smooth
symbolic calculus. We state it separately so that the exact regularity budget of Assumption \ref{ass:symbol} is transparent.

\begin{lemma}[Non-smooth symbolic remainder estimate]\label{lem:remainder}
Let $a$ satisfy Assumption \ref{ass:symbol}, let $0 < \epsilon < 1$, and let $b \in C^{1,\epsilon}(\R^d)$ with $\nabla b \in C^\epsilon(\R^d; \R^d)$.
Then the remainder
\begin{equation*}
    R_j(b) := \partial_j[\mathcal{A}_a, M_b] - S_j, \qquad S_j = \operatorname{Op}\Big( \xi_j \sum_{k=1}^d \tfrac{\partial a}{\partial \xi_k} \tfrac{\partial b}{\partial x_k} \Big),
\end{equation*}
admits the explicit kernel representation
\begin{equation*}
    R_j(b)u(x) = \int_{\R^d} (\partial_j k)(x-y)\,\big[\, b(y) - b(x) - \nabla b(x)\cdot(y-x) \,\big]\, u(y)\, \rd y,
\end{equation*}
where $k = \mathcal{F}^{-1}a$ is the convolution kernel of $\mathcal{A}_a$. It is a smoothing operator of order $-\epsilon$: for every $1 < p < \infty$
it maps $L^p(\R^d)$ boundedly into the Besov space $B^\epsilon_{p,\infty}(\R^d)$, and hence into the fractional Sobolev space $W^{\epsilon',p}(\R^d)$
for every $\epsilon' \in (0,\epsilon)$, with
\begin{equation*}
    \norm{R_j(b)}_{\mathcal{L}(L^p(\R^d),\, B^\epsilon_{p,\infty}(\R^d))} \le C\, \norm{\nabla b}_{C^\epsilon(\R^d)},
\end{equation*}
where $C = C(d,p,\epsilon)$ depends on the symbol only through $\norm{a}_{W^{s,2}(\Sp^{d-1})}$.
\end{lemma}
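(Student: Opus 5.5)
We first derive the kernel representation, and then use it to estimate finite differences of $R_j(b)u$, which is what membership in $B^\epsilon_{p,\infty}(\R^d)$ requires.

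\emph{Step 1: the kernel identity.} We have $[\mathcal{A}_a,M_b]u(x)=\int k(x-y)\,(b(y)-b(x))\,u(y)\,\rd y$. Differentiating in $x_j$ gives two terms. The first is $\int(\partial_jk)(x-y)(b(y)-b(x))u(y)\,\rd y$. The second is $-\partial_jb(x)\,\mathcal{A}_au(x)$.

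Next, write $b(y)-b(x)=\nabla b(x)\cdot(y-x)+\rho(x,y)$, where $\rho$ is the second-order Taylor remainder. We need the identity
\begin{equation*}
\int(\partial_jk)(x-y)\,(y_k-x_k)\,u(y)\,\rd y=-\operatorname{Op}\big(\partial_{\xi_k}(\xi_ja)\big)u(x)\quad\text{(up to the }2\pi i\text{ normalisation)}.
\end{equation*}
This holds because multiplying a kernel by $-z_k$ corresponds to $\partial_{\xi_k}$ on the symbol. Moreover $\partial_{\xi_k}(\xi_ja)=\delta_{jk}a+\xi_j\partial_{\xi_k}a$. The $\delta_{jk}a$ part then cancels $-\partial_jb\,\mathcal{A}_a$, and what survives is precisely $S_j$.

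To make this rigorous we treat $\partial_jk$ as a principal-value Calder\'on--Zygmund kernel of degree $-d-1$, and justify the identity first for $u\in\mathcal{S}$. The operator $z_k\,\partial_jk(z)$ is then a homogeneous degree $-d$ kernel with mean zero, whose symbol is $\xi_j\partial_{\xi_k}a$ plus the delta term. The remaining integral has kernel $(\partial_jk)(x-y)\rho(x,y)$. Since $|\rho|\le[\nabla b]_\epsilon|x-y|^{1+\epsilon}$, this kernel is bounded by $C|x-y|^{-d+\epsilon}$, so it is absolutely convergent locally. It is \emph{not} integrable at infinity, however, and we handle that below.

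\emph{Step 2: the $L^p$ bound.} We split $|x-y|<1$ from $|x-y|\ge1$.
\begin{itemize}
\item Near the diagonal, the kernel bound $C[\nabla b]_\epsilon|z|^{-d+\epsilon}$ is integrable, so Young's inequality applies.
\item Far from the diagonal, we use $|\rho|\le 2\|\nabla b\|_\infty|x-y|$ together with $|\partial_jk|\le C|z|^{-d-1}$. This gives a kernel of size $|z|^{-d}$, which is not absolutely integrable. We therefore regroup. The piece $\int_{|x-y|\ge1}\partial_jk(x-y)(b(y)-b(x))u(y)\,\rd y$ equals $-\int k\,\cdot$ (a truncated-commutator term) minus boundary terms. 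More simply, we write the far part as $T_1[M_b]$-type truncated Calder\'on--Zygmund commutators with kernels $\partial_jk(z)z_k\mathbf 1_{|z|\ge1}$. These are bounded on $L^p$ by Calder\'on--Zygmund theory: the kernels have degree $-d$, satisfy the smoothness condition, and the truncated operators are uniformly bounded. Each such term appears multiplied by $\partial_kb(x)$ or $\partial_kb(y)$ averaged along the segment, and we control it via the mean value representation $b(y)-b(x)=\int_0^1\nabla b(x+t(y-x))\,\rd t\cdot(y-x)$.
\end{itemize}
Together these give $\|R_j(b)u\|_{L^p}\le C\|\nabla b\|_{C^\epsilon}\|u\|_p$.

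\emph{Step 3: Besov smoothing (main obstacle).} We show $\|\Delta_hR_j(b)u\|_p\le C|h|^\epsilon\|\nabla b\|_{C^\epsilon}\|u\|_p$ for $|h|\le1$. Write the kernel as $K(x,y)=\partial_jk(x-y)\rho(x,y)$, and split at $|x-y|\sim2|h|$.
\begin{itemize}
\item On the near region, each of $K(x+h,y)$ and $K(x,y)$ is bounded by $|x-y|^{-d+\epsilon}$. Integrating over $|z|\lesssim|h|$ gives $C|h|^\epsilon$.
\item On the far region, we estimate the difference $K(x+h,y)-K(x,y)$. The derivative of $\partial_jk$ contributes $|h||z|^{-d-2}\cdot|z|^{1+\epsilon}$, using the gradient bound $|\nabla\partial_jk|\le C|z|^{-d-2}$ from Remark \ref{rem:sobolev}. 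The change in $\rho$ is $|\rho(x+h,y)-\rho(x,y)|\lesssim[\nabla b]_\epsilon|h|\,|z|^{\epsilon}$, which follows from $\partial_x\rho=-\nabla b(x)+\nabla b(x)-\nabla^2$-free terms, namely $\nabla_x\rho=(\nabla b(x)-\nabla b(y))$-type expressions of size $|z|^\epsilon$ (with no second derivative appearing after exact computation). Both contributions give $|h||z|^{-d-1+\epsilon}$, which is integrable over $|z|>2|h|$ with total $|h|^\epsilon$, provided $\epsilon<1$.
\end{itemize}
The delicate point is that for $|z|\ge1$ the factor $|z|^{1+\epsilon}$ should be replaced by $\|\nabla b\|_\infty|z|$. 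This yields the tail $|h||z|^{-d-1}$, which is integrable at infinity. The main obstacle is therefore again ensuring absolute integrability in the intermediate region, which the $\epsilon<1$ reduction handles.

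Schur's test then gives the uniform finite-difference bound, hence $B^\epsilon_{p,\infty}$. The embedding $B^\epsilon_{p,\infty}\hookrightarrow W^{\epsilon',p}$ for $\epsilon'<\epsilon$ is standard. The constants depend on $a$ only through the kernel bounds, which are controlled by $\|a\|_{W^{s,2}(\Sp^{d-1})}$ (Appendix \ref{app:kernel}).
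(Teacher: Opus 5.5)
Your Step~1 and the near-region estimate of Step~3 agree with the paper, but the far-region estimate of Step~3 rests on a false bound. You claim $|\rho(x+h,y)-\rho(x,y)|\lesssim[\nabla b]_\epsilon|h|\,|x-y|^\epsilon$, treating $\nabla_x\rho$ as a difference $\nabla b(x)-\nabla b(y)$. That difference is $\nabla_y\rho$. The finite difference acts in $x$, and $\nabla_x\rho(x,y)=-\nabla^2 b(x)(y-x)$ does involve the second derivative you want to avoid. The exact increment is
\[
\rho(x+h,y)-\rho(x,y)=c_h(x)-\Delta_h\nabla b(x)\cdot(y-x),\qquad c_h(x):=\nabla b(x+h)\cdot h-\big[b(x+h)-b(x)\big].
\]
Wherever $|\Delta_h\nabla b(x)|\sim|h|^\epsilon$ (which $\nabla b\in C^\epsilon$ permits), the second term has size $|h|^\epsilon|x-y|$. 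This exceeds your bound by the factor $(|x-y|/|h|)^{1-\epsilon}$. The corresponding kernel piece $(\partial_jk)(x-y)\,\Delta_h\nabla b(x)\cdot(y-x)$ is then only $O(|h|^\epsilon|x-y|^{-d})$ on $\{|x-y|>2|h|\}$. This is not integrable in $y$ at infinity, so Schur's test gives nothing.

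Your ``delicate point'' addresses the other piece, $[(\partial_jk)(x+h-y)-(\partial_jk)(x-y)]\,\rho(x+h,y)=O(|h|\,|x-y|^{-d-1+\epsilon})$. That piece is already integrable because $\epsilon<1$.

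The missing idea is cancellation. Since $\Delta_h\nabla b(x)$ depends on $x$ alone, the bad piece equals $\sum_l(\Delta_h\partial_lb)\,P^{(h)}_lu$. Here $P^{(h)}_l$ is the truncation at radius $2|h|$ of the Calder\'on--Zygmund convolution operator with kernel $z_l\,\partial_jk(z)$ and symbol $\delta_{jl}a+\xi_j\partial_{\xi_l}a$. Cotlar's inequality makes these truncations uniformly bounded on $L^p$, which gives $O([\nabla b]_\epsilon|h|^\epsilon)$. The $c_h$ piece is controlled by $|c_h|\lesssim[\nabla b]_\epsilon|h|^{1+\epsilon}$ against $\|(\partial_jk)\mathbf{1}_{\{|z|>2|h|\}}\|_{L^1}\lesssim|h|^{-1}$.

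Step~2 has an analogous gap. Removing $\sum_l\partial_lb(x)\,P^{(1)}_l$ is harmless. But the far part still contains the truncated Calder\'on commutator with kernel $(\partial_jk)(x-y)\big(b(y)-b(x)\big)\mathbf{1}_{\{|x-y|\ge1\}}$, of size $|x-y|^{-d}$. Writing $b(y)-b(x)$ with the segment-averaged gradient $\int_0^1\nabla b(x+t(y-x))\,\rd t$ does not reduce this to truncated convolution operators. That factor depends jointly on $x$ and $y$ and cannot be pulled out of the integral. A bounded two-variable factor can destroy exactly the cancellation on which $L^p$-boundedness depends. Boundedness of this operator is the content of Calder\'on's first commutator theorem and does not follow from size and smoothness bounds alone.

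The paper sidesteps this. It takes the $L^p$ component of the Besov norm directly from $\norm{R_j(b)}_{\mathcal{L}(L^p)}\le\norm{\partial_j[\mathcal{A}_a,M_b]}_{\mathcal{L}(L^p)}+\norm{S_j}_{\mathcal{L}(L^p)}\le C\norm{\nabla b}_{L^\infty}$. This uses Part~I of Theorem \ref{thm:second_comm} for the commutator and the H\"ormander--Mikhlin theorem for $S_j$. You should do the same.
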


\begin{proof}
The kernel identity and the boundedness into $B^\epsilon_{p,\infty}(\R^d)$ are established in Appendix \ref{app:remainder} by elementary
means (Calder\'on--Zygmund kernel bounds, Taylor's theorem, and the finite-difference characterisation of Besov spaces), the sole
non-elementary input being the $L^p$-boundedness of $\partial_j[\mathcal{A}_a, M_b]$ from Part I of Theorem \ref{thm:second_comm}
(Calder\'on's first commutator theorem), invoked once to control the $L^p$ component of the Besov norm. The continuous embedding
$B^\epsilon_{p,\infty}(\R^d) \hookrightarrow B^{\epsilon'}_{p,p}(\R^d) = W^{\epsilon',p}(\R^d)$ for $\epsilon' \in (0,\epsilon)$
is standard \cite[\S 2.3.2]{Triebel1983}.
\end{proof}

\begin{proof}[Proof of Part I: Boundedness into $\dot{W}^{1,p}(\R^d)$]
Applying the spatial derivative $\partial_j$ in the sense of Schwartz distributions and using the Leibniz rule yields:
\begin{equation*}
    \partial_j [\mathcal{A}_a, M_b] u = [\mathcal{A}_a \partial_j, M_b] u - (\partial_j b)\mathcal{A}_a u.
\end{equation*}

We evaluate both terms in the $L^p$ norm by verifying the requirements of the underlying harmonic analysis theorems:

\begin{enumerate}
    \item \textbf{The Commutator Term:} The operator $T_j = \mathcal{A}_a \partial_j$ is a singular integral operator of order $1$ with Fourier symbol
    $i\xi_j a(\xi)$, homogeneous of degree $1$. Normalising $a$ to have zero spherical mean (as we may, since $[\mathcal{A}_a, M_b]$ is unchanged by
    $a \mapsto a - c_0$; the mean $c_0$ contributes only the bounded multiplication $[c_0\partial_j, M_b] = c_0 M_{\partial_j b}$; cf.\ Appendix \ref{app:identity}),
    Assumption \ref{ass:symbol} ($s > d+2$; see Remark \ref{rem:sobolev}) makes $i\xi_j a$ restrict to a $C^{\lfloor d/2\rfloor+2}(\Sp^{d-1})$ function;
    hence its convolution kernel $K_j = \partial_j k$ is homogeneous of degree $-d-1$ and smooth off the origin (Grafakos \cite[Proposition 2.4.8]{Grafakos2014})
    and satisfies both the size bound $|K_j(z)| \le C|z|^{-d-1}$ and the regularity bound $|\nabla K_j(z)| \le C|z|^{-d-2}$ of \eqref{eq:app-kernel}.
    Consequently the commutator kernel $L(x,y) = K_j(x-y)\big(b(x)-b(y)\big)$ is a Calder\'on--Zygmund kernel of order $0$: with $b$ globally Lipschitz
    (established below), $|L(x,y)| \le C\norm{\nabla b}_\infty|x-y|^{-d}$ and $|\nabla_{x,y}L(x,y)| \le C\norm{\nabla b}_\infty|x-y|^{-d-1}$,
    while the cancellation and weak-boundedness conditions hold automatically: $K_j$ has divergence structure, so $T_j 1 = 0$ and
    $[T_j, M_b]1 = \mathcal{A}_a(\partial_j b) \in \mathrm{BMO}$ with norm $\le C\norm{\nabla b}_\infty$ (symmetrically for the transpose).
    Calder\'on's First Commutator Theorem \cite{Calderon1965} for the odd part of $K_j$, and Coifman--Meyer \cite{CoifmanMeyer1978} (equivalently
    the $T(1)$ theorem; see also Stein \cite[Chapter VI]{Stein1993}) for the general complex kernel, then apply once the spatial multiplier is globally Lipschitz. 
    
    By Assumption \ref{ass:spatial_reg}, $\nabla b \in C_0(\R^d) \subset L^\infty(\R^d; \R^d)$, so that $b$ is globally Lipschitz (i.e.,
    $b \in \dot{W}^{1,\infty}(\R^d)$). All hypotheses (kernel size, kernel regularity, cancellation, and multiplier Lipschitz continuity) being met,
    Calder\'on's theorem guarantees that $[T_j, M_b]$ is a bounded operator on $L^p(\R^d)$.

    \item \textbf{The Multiplication Term:} To bound the action of the Fourier multiplier $\mathcal{A}_a$, we invoke the H\"ormander multiplier
    theorem \cite{Hormander1960} (see also Grafakos \cite[Theorem 6.2.7]{Grafakos2014}). The classical theorem requires the symbol to satisfy
    sufficient fractional differentiability bounds. Because $a(\xi)$ is homogeneous of degree zero, its radial derivatives vanish, meaning the
    H\"ormander condition reduces entirely to its regularity on the unit sphere, specifically requiring $a \in W^{s_0,2}(\Sp^{d-1})$ for $s_0 > \frac{d}{2}$. 
    
    Our Assumption \ref{ass:symbol} exceeds this threshold, confirming that $\mathcal{A}_a$ is a bounded operator on $L^p(\R^d)$. 
    
    Finally, as established above, $\partial_j b \in L^\infty(\R^d)$. Because $L^p(\R^d)$ is a module over $L^\infty(\R^d)$, pointwise multiplication
    by $\partial_j b$ preserves $L^p$ integrability:
    \begin{equation*}
        \norm{(\partial_j b)\mathcal{A}_a u}_{L^p} \le \norm{\partial_j b}_{L^\infty} \norm{\mathcal{A}_a u}_{L^p} \le C \norm{u}_{L^p}.
    \end{equation*}
\end{enumerate}

Consequently, both components are bounded in $L^p$, proving that the commutator $[\mathcal{A}_a, M_b]: L^p(\R^d) \to \dot{W}^{1,p}(\R^d)$ acts as a bounded
linear operator. (When $p \ge d$ the kernel integral defining $[\mathcal{A}_a, M_b]$ need not converge absolutely for every $u \in L^p$; the operator is
then defined on $C_c^\infty(\R^d)$ and extended by the bound just proved, its image lying in $\dot{W}^{1,p}(\R^d)$ read modulo constants. This is immaterial
below: Part II uses only the operator $\partial_j[\mathcal{A}_a, M_b]$, and the applications pair against compactly supported test functions.)
\end{proof}

\begin{proof}[Proof of Part II: Global Compactness via Truncation]
In standard pseudo-differential calculus, spatial symbols are assumed to be infinitely differentiable. Because our spatial multiplier possesses only limited
fractional regularity ($b \in C^{1,\epsilon}(\R^d)$), we work in the non-smooth symbolic setting of Coifman and Meyer \cite{CoifmanMeyer1978} (see also
Taylor \cite[Chapter 1; the relevant commutator estimates are in \S 3.6 and \S 4.1]{Taylor1991} for a modern exposition); the single quantitative input we
require from it is furnished (by an elementary, self-contained Calder\'on--Zygmund kernel argument rather than the full paraproduct calculus) in
Lemma \ref{lem:remainder}. That lemma establishes that the remainder operator $R_j(b) = \partial_j [\mathcal{A}_a, M_b] - S_j(b)$ is a smoothing operator
of order $-\epsilon$; fixing once and for all an exponent $\epsilon' \in (0,\epsilon)$, it maps $L^p(\R^d)$ boundedly into the fractional Sobolev space
$W^{\epsilon', p}(\R^d)$, with operator norm controlled by $\norm{\nabla b}_{C^\epsilon(\R^d)}$. This fractional smoothing is the sole ingredient we extract
from the non-smooth calculus; the remainder of the argument is elementary.

\vspace{0.5cm}
\textbf{Step 1: Compactness for Compactly Supported Multipliers.} 

First, assume $b_c \in C_c^{1,\epsilon}(\R^d)$ is a multiplier with compact support. We must show $R_j(b_c)$ is a compact operator on $L^p(\R^d)$.
To invoke the fractional Rellich-Kondrachov theorem \cite[Theorem 7.1]{DiNezza2012}, the target Sobolev space must be restricted to a bounded spatial
domain $\Omega \subset \R^d$. 

The global embedding $W^{\epsilon',p}(\R^d) \hookrightarrow L^p(\R^d)$ is continuous but fails to be compact on the unbounded domain, since uniformly
bounded sequences may escape to spatial infinity without converging. Although the multiplier $b_c$ has compact support, singular integral and
pseudo-differential operators are non-local; thus, the mapped functions $R_j(b_c) u$ do not inherit this compact support, preventing a direct, global
application of the embedding theorem. 

To handle this, we isolate the locally compact behaviour from the non-local operator tail. Let $\tilde{\chi} \in C_c^\infty(\R^d)$ be a cutoff function
such that $\tilde{\chi} \equiv 1$ on the support of $b_c$ and vanishes outside a larger ball. We decompose the operator:
\begin{equation*}
    R_j(b_c) = \tilde{\chi} R_j(b_c) + (1-\tilde{\chi}) R_j(b_c).
\end{equation*}

\textit{The Local Part:} The operator $\tilde{\chi} R_j(b_c)$ maps $L^p(\R^d) \to W^{\epsilon',p}(\R^d)$. The left multiplication by $\tilde{\chi}$
ensures the image is supported within the compact domain $\Omega = \text{supp}(\tilde{\chi})$. By the compact embedding theorem for fractional Sobolev
spaces \cite[Theorem 7.1]{DiNezza2012}, the embedding $W^{\epsilon',p}(\Omega) \hookrightarrow L^p(\Omega)$ is compact. Thus, $\tilde{\chi} R_j(b_c)$
is a compact operator.

\textit{The Non-Local Tail:} We evaluate the action of $(1-\tilde{\chi}) R_j(b_c) u$ for $x \notin \text{supp}(\tilde{\chi})$. In this region, $b_c(x) = 0$
and $\nabla b_c(x) = 0$. Consequently, $S_j(b_c) u(x) = 0$ and $M_{b_c} \mathcal{A}_a u(x) = 0$. The remainder reduces to the action of the integral operator:
\begin{equation*}
    (1-\tilde{\chi}(x)) R_j(b_c) u(x) = (1-\tilde{\chi}(x)) \partial_j \mathcal{A}_a (b_c u) (x) = \int_{\R^d} H(x,y) u(y) \rd y
\end{equation*}
where the integral kernel is $H(x,y) = (1-\tilde{\chi}(x)) K_j(x-y) b_c(y)$, and $K_j$ is the convolution kernel of $\partial_j \mathcal{A}_a$. 

Because $\text{supp}(1-\tilde{\chi})$ and $\text{supp}(b_c)$ are separated by a strictly positive distance $M > 0$, the singularity of $K_j$ at the origin is
avoided. Standard Calder\'on-Zygmund kernel estimates (see, e.g., Grafakos \cite[Proposition 2.4.8]{Grafakos2014} or Stein \cite[Chapter VI]{Stein1993})
guarantee that $|K_j(z)| \le C|z|^{-d-1}$. Therefore, the kernel satisfies:
\begin{equation*}
    |H(x,y)| \le C (1+|x|)^{-d-1} \mathbf{1}_{\text{supp}(b_c)}(y).
\end{equation*}

This bounded, smooth kernel has rapid decay in $x$ and compact support in $y$. Integrating $|H(x,y)|^p$ in $x$ yields an asymptotic tail bounded by
$\int (1+|x|)^{-p(d+1)} \rd x$, which converges since $p(d+1) > d$ for any $p > 1$. This integrability ensures the kernel belongs to the mixed Lebesgue
space $L^p(\R^d_x; L^{p'}(\R^d_y))$. 

Integral operators satisfying this Hille-Tamarkin condition are compact on $L^p(\R^d)$ (see, e.g., J\"orgens \cite{Jorgens1982}). Since both the local part
and the non-local tail are compact, $R_j(b_c)$ is a compact operator.

\vspace{0.5cm}
\textbf{Step 2: Truncation Convergence.}

To pass from locally compact operators to the global remainder $R_j(b)$, we construct a sequence of compactly supported multipliers $\{b_n\}$ such that their
corresponding operators $R_j(b_n)$ converge to $R_j(b)$ in the uniform operator topology. By Assumption \ref{ass:spatial_reg}, $b \in C^{1,\epsilon}(\R^d)$
with the sup-norm vanishing condition $\nabla b \in C_0(\R^d)$. 

Let $\chi \in C_c^\infty(\R^d)$ be a standard smooth localizing bump function satisfying $\chi \equiv 1$ on $B(0,1)$ and $\text{supp}(\chi) \subset B(0,2)$.
We define the scaled spatial cutoff $\chi_n(x) = \chi(x/n)$ and construct the truncated multiplier sequence as:
\begin{equation*}
    b_n(x) = (b(x) - b(0))\chi_n(x).
\end{equation*}

\textit{Note:} We deliberately truncate $(b(x) - b(0))$ rather than $b(x)$. Because pseudo-differential commutators annihilate constants (i.e., $[P, M_{c}] = 0$),
$R_j(b)$ depends exclusively on the gradient $\nabla b$, not on the absolute magnitude of $b$. Subtracting $b(0)$ anchors the function at the origin, which is
needed to control its asymptotic growth rate via the fundamental theorem of calculus. 

Applying the product rule yields the gradient difference:
\begin{equation*}
    \nabla b_n(x) - \nabla b(x) = (\chi_n(x) - 1) \nabla b(x) + E_n(x),
\end{equation*}
where the error term generated by the cutoff derivative is $E_n(x) = \frac{1}{n} (\nabla \chi)(x/n) (b(x) - b(0))$. 

We analyze both terms in the $L^\infty(\R^d)$ topology. Unlike the classical little-H\"older route, no control of the H\"older seminorm of
$\nabla b_n - \nabla b$ is needed: the operator estimate invoked in Step~3 depends on the multiplier gradient only through its supremum norm
(Appendix \ref{app:lp}, estimate \eqref{eq:app-lpbound}).

First, the cutoff remainder $(\chi_n - 1)\nabla b$ vanishes uniformly. Since $\chi_n \equiv 1$ on $B(0,n)$ and $|\chi_n - 1| \le 1$ globally,
\begin{equation*}
    \norm{(\chi_n - 1)\nabla b}_{L^\infty} \le \sup_{|x| \ge n} |\nabla b(x)| \longrightarrow 0 \qquad (n \to \infty),
\end{equation*}
by the sup-norm decay $\nabla b \in C_0(\R^d)$ of Assumption \ref{ass:spatial_reg}.

Second, we bound the error term $E_n$. Fix $\delta > 0$. Since $\nabla b \in C_0(\R^d)$, choose $R$ with $|\nabla b(x)| < \delta$ for $|x| > R$;
then for $|x| > R$ the fundamental theorem of calculus $b(x) - b(0) = \int_0^1 \nabla b(tx)\cdot x\,\rd t$, split at $t = R/|x|$ (where $|tx| < R$
contributes $\norm{\nabla b}_{L^\infty}$ over a set of $t$-measure $R/|x|$, and $|tx| > R$ contributes at most $\delta$), yields the explicit
sublinear bound $|b(x) - b(0)| \le R\,\norm{\nabla b}_{L^\infty} + \delta|x|$. The cutoff derivative $(\nabla \chi)(x/n)$ is supported in the
annulus $n \le |x| \le 2n$, so for $n > R$,
\begin{equation*}
    \norm{E_n}_{L^\infty} \le \frac{\norm{\nabla \chi}_{L^\infty}}{n} \sup_{n \le |x| \le 2n} |b(x) - b(0)|
    \le \frac{\norm{\nabla \chi}_{L^\infty}}{n}\big(R\,\norm{\nabla b}_{L^\infty} + 2n\delta\big)
    \le \norm{\nabla \chi}_{L^\infty}\Big(\tfrac{R\,\norm{\nabla b}_{L^\infty}}{n} + 2\delta\Big).
\end{equation*}
Letting $n \to \infty$ gives $\limsup_n \norm{E_n}_{L^\infty} \le 2\delta\,\norm{\nabla \chi}_{L^\infty}$; as $\delta > 0$ was arbitrary,
$\norm{E_n}_{L^\infty} \to 0$.

Summing the two estimates yields $\lim_{n \to \infty} \norm{\nabla b_n - \nabla b}_{L^\infty(\R^d)} = 0$. No H\"older-seminorm or interpolation estimate is
required: the deliberately weakened decay hypothesis of Assumption \ref{ass:spatial_reg} is calibrated precisely so that supremum-norm convergence suffices.

\vspace{0.5cm}
\textbf{Step 3: Uniform Operator Limit.}

To conclude global compactness, we evaluate the topological limit of the truncated operators $R_j(b_n)$. We first observe the algebraic behavior of the
commutator mapping $b \mapsto \partial_j [\mathcal{A}_a, M_b]$. This mapping is linear in the spatial multiplier. Furthermore, because Fourier multipliers
are linear operators that commute with scalar multiplication, the commutator annihilates constants: for any $c \in \C$,
$[\mathcal{A}_a, M_c]u = \mathcal{A}_a(cu) - c\mathcal{A}_a u = 0$. 

Equally important, the principal pseudo-differential operator $S_j(b)$ also annihilates constants, as its spatial amplitude contains the gradient $\nabla b$
as a linear factor. Consequently, the remainder operator $R_j(b) = \partial_j [\mathcal{A}_a, M_b] - S_j(b)$ vanishes on constants, so its action depends only
on the spatial variations of $b$, quantified by its gradient $\nabla b$.

A distinction must be made regarding sublinear growth. Because $b(x)$ grows at infinity, the truncated difference $b(x) - b_n(x)$ is not in $L^\infty(\R^d)$,
so a pseudo-differential estimate requiring full symbol control would fail here. 

However, the remainder obeys the stronger operator bound \eqref{eq:app-lpbound} of Appendix \ref{app:lp},
\begin{equation*}
    \norm{R_j(\beta)}_{\mathcal{L}(L^p)} \le C\, \norm{\nabla \beta}_{L^\infty(\R^d)},
\end{equation*}
valid for every $\beta$ with $\nabla \beta \in L^\infty$: it follows at once from the Calder\'on-commutator bound of Part~I,
$\norm{\partial_j[\mathcal{A}_a, M_\beta]}_{\mathcal{L}(L^p)} \le C\norm{\nabla \beta}_\infty$, together with
$\norm{S_j(\beta)}_{\mathcal{L}(L^p)} \le C\norm{\nabla \beta}_\infty$, and depends on $\beta$ only through $\nabla \beta$, thereby
bypassing the (unbounded) $L^\infty$ amplitude of $b - b_n$. Thus the mapping $\nabla b \mapsto R_j(b)$ is a bounded linear transformation
from $L^\infty(\R^d; \R^d)$ into the Banach algebra $\mathcal{L}(L^p(\R^d))$, and
\begin{equation*}
    \norm{R_j(b) - R_j(b_n)}_{\mathcal{L}(L^p)} = \norm{R_j(b - b_n)}_{\mathcal{L}(L^p)} \le C \norm{\nabla b - \nabla b_n}_{L^\infty(\R^d)}.
\end{equation*}

By the truncation convergence established in Step 2, the right-hand side vanishes as $n \to \infty$. This implies that
$\lim_{n \to \infty} \norm{R_j(b) - R_j(b_n)}_{\mathcal{L}(L^p)} = 0$, so the sequence $\{R_j(b_n)\}$ converges to $R_j(b)$ in the uniform operator topology.

Finally, we invoke a standard theorem of functional analysis: for any Banach space $X$ (here, $X = L^p(\R^d)$), the space of compact operators $\mathcal{K}(X)$
forms a closed ideal within the space of bounded operators $\mathcal{L}(X)$ under the uniform operator norm. By Step 1, every locally truncated operator
$R_j(b_n)$ is a compact operator on $L^p(\R^d)$. 

Because $R_j(b)$ is the uniform operator limit of a sequence of compact operators, the closedness of $\mathcal{K}(L^p)$ guarantees that the global remainder
$R_j(b)$ is compact on $L^p(\R^d)$. This completes the proof.
\end{proof}

\begin{remark}[Uniformity in an Evolution Parameter]\label{rem:parameter}
Theorem \ref{thm:second_comm} is stated for a spatial multiplier $b$ depending on $x \in \R^d$ alone. In the applications of Sections \ref{sec:applications}
and \ref{sec:wave_eq}, the Fourier multiplier $\mathcal{A}_a$ acts solely in the spatial variable $x$, while the multipliers additionally depend on an
evolution parameter $t \in \R$. Since the constants appearing in Parts I and II depend on $b$ only through $\norm{\nabla b}_{C^\epsilon(\R^d)}$ (and,
for the compactness of $R_j$, only through the uniform modulus of decay of $\nabla b$ at spatial infinity encoded in Assumption \ref{ass:spatial_reg}),
the decomposition $\partial_j[\mathcal{A}_a, M_{b(t,\cdot)}] = S_j(t) + R_j(t)$ holds for each fixed $t$, with the compactness of $R_j(t)$ and all operator
bounds uniform in $t$ whenever the hypotheses on $b(t,\cdot)$ hold uniformly in $t$. We invoke the lemma in this parametrised form throughout Sections
\ref{sec:applications} and \ref{sec:wave_eq}.
\end{remark}

\begin{assumption}[Global Spatial Boundedness]\label{ass:spatial_bound}
In addition to Assumption \ref{ass:spatial_reg}, we assume the spatial multiplier is globally bounded, namely $b \in L^\infty(\R^d)$.
\end{assumption}

\begin{theorem}[Inhomogeneous $L^p$ Boundedness]\label{thm:inhomogeneous}
Under Assumptions \ref{ass:symbol}, \ref{ass:spatial_reg}, and \ref{ass:spatial_bound}, the commutator $[\mathcal{A}_a, M_b]$ maps $L^p(\R^d)$ boundedly
into the inhomogeneous Sobolev space $W^{1,p}(\R^d)$.
\end{theorem}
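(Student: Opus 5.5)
The plan is to combine Part I of Theorem \ref{thm:second_comm}, which already controls the gradient, with a separate zeroth-order estimate. Recall that $\norm{f}_{W^{1,p}} \simeq \norm{f}_{L^p} + \sum_j \norm{\partial_j f}_{L^p}$. Part I gives
\begin{equation*}
\norm{\partial_j[\mathcal{A}_a, M_b]u}_{L^p} \le C\norm{\nabla b}_{L^\infty}\norm{u}_{L^p}
\end{equation*}
for all $j$. So the only new ingredient is the bound
\begin{equation*}
\norm{[\mathcal{A}_a, M_b]u}_{L^p} \le C\norm{u}_{L^p}.
\end{equation*}
Once this holds, the ambiguity ``modulo constants'' noted at the end of the proof of Part I also disappears: the commutator becomes a genuine $L^p$ function, and since its distributional gradient lies in $L^p$, it belongs to $W^{1,p}(\R^d)$.

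For the zeroth-order bound we write $[\mathcal{A}_a, M_b]u = \mathcal{A}_a(bu) - b\,\mathcal{A}_a u$. By Assumption \ref{ass:spatial_bound}, $M_b$ is bounded on $L^p$ with norm $\norm{b}_{L^\infty}$. By the H\"ormander--Mikhlin theorem, invoked exactly as in Part I under Assumption \ref{ass:symbol}, $\mathcal{A}_a$ is bounded on $L^p$. Hence
\begin{equation*}
\norm{[\mathcal{A}_a, M_b]u}_{L^p} \le 2\norm{\mathcal{A}_a}_{\mathcal{L}(L^p)}\norm{b}_{L^\infty}\norm{u}_{L^p}.
\end{equation*}
In particular both terms are defined for every $u \in L^p$, with no absolute-convergence issue for the kernel integral when $p \ge d$. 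Combining the two estimates gives
\begin{equation*}
\norm{[\mathcal{A}_a, M_b]}_{\mathcal{L}(L^p, W^{1,p})} \le C\big(\norm{b}_{L^\infty} + \norm{\nabla b}_{L^\infty}\big).
\end{equation*}

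The one point needing care is consistency: the operator extended from $C_c^\infty$ by density in Part I must coincide with the everywhere-defined operator $u \mapsto \mathcal{A}_a(bu) - b\mathcal{A}_a u$. I would check this as follows. For $u \in C_c^\infty$ the two agree. Both sides are continuous from $L^p$ into $\mathcal{D}'(\R^d)$: the second is bounded into $L^p$, and the derivative identity $\partial_j[\mathcal{A}_a, M_b]u = [\mathcal{A}_a\partial_j, M_b]u - (\partial_j b)\mathcal{A}_a u$ holds distributionally by density. So they agree on all of $L^p$.

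It then remains to identify the distributional gradient of the $L^p$ function $[\mathcal{A}_a, M_b]u$ with the $L^p$ limit furnished by Part I. This holds because differentiation is continuous on $\mathcal{D}'$. That identification is the only step requiring thought; the rest is direct.
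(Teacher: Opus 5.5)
Your proposal is correct and follows the paper's proof. You take the gradient bound from Part I of Theorem \ref{thm:second_comm}, and you get the zeroth-order bound by writing $[\mathcal{A}_a, M_b]u = \mathcal{A}_a(bu) - b\,\mathcal{A}_a u$ and using the H\"ormander--Mikhlin boundedness of $\mathcal{A}_a$ together with $b \in L^\infty$. Your extra density step, which identifies the distributional gradient of the everywhere-defined $L^p$ function with the Part I limit via continuity of differentiation on $\mathcal{D}'$, is sound; the paper leaves this step implicit.
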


\begin{proof}
By Theorem \ref{thm:second_comm}, we have already established that the spatial gradient is bounded:
$\norm{\partial_j [\mathcal{A}_a, M_b] u}_{L^p} \le C\norm{u}_{L^p}$. To establish mapping into the inhomogeneous space $W^{1,p}(\R^d)$,
it remains to prove the zero-order $L^p$ bound of the commutator itself. Expanding the commutator yields:
\begin{equation*}
    [\mathcal{A}_a, M_b]u = \mathcal{A}_a(bu) - b\mathcal{A}_a u.
\end{equation*}

Taking the $L^p(\R^d)$ norm and applying the triangle inequality, we evaluate both terms. For the first term, because $\mathcal{A}_a$ is bounded on
$L^p(\R^d)$ via the H\"ormander condition (Assumption \ref{ass:symbol}), and $b \in L^\infty(\R^d)$ (Assumption \ref{ass:spatial_bound}), we have:
\begin{equation*}
    \norm{\mathcal{A}_a(bu)}_{L^p} \le C_1 \norm{bu}_{L^p} \le C_1 \norm{b}_{L^\infty} \norm{u}_{L^p}.
\end{equation*}

For the second term, pointwise multiplication by $b \in L^\infty(\R^d)$ boundedly maps $L^p(\R^d)$ to itself:
\begin{equation*}
    \norm{b\mathcal{A}_a u}_{L^p} \le \norm{b}_{L^\infty} \norm{\mathcal{A}_a u}_{L^p} \le C_2 \norm{b}_{L^\infty} \norm{u}_{L^p}.
\end{equation*}

Summing these bounds confirms that $\norm{[\mathcal{A}_a, M_b]u}_{L^p} \le C \norm{u}_{L^p}$. Since both the operator and its weak spatial derivatives
are bounded in $L^p(\R^d)$, the commutator maps continuously into $W^{1,p}(\R^d)$. This completes the proof.
\end{proof}

\section{Generalization to Standard Operators in $L^p$}\label{sec:generalization}

Tartar established that the second commutation lemma extends to \emph{standard operators} via algebraic manipulations. Following Tartar \cite[\S 1]{Tartar2017},
a standard operator is a finite sum $S = \sum_m \mathcal{A}_{a_m} M_{b_m}$ of compositions of a Fourier multiplier $\mathcal{A}_{a_m}$ (symbol $a_m$ homogeneous
of degree zero) with a multiplication $M_{b_m}$; its symbol is $s(x,\xi) = \sum_m a_m(\xi)\,b_m(x)$. When the spatial coefficients decay at infinity
($b_m \in C^\epsilon \cap C_0$), so that the first commutators $[\mathcal{A}_{a_m}, M_{b_m}]$ are compact (Lemma \ref{lem:building}(2)), $S$ is determined
by $s$ modulo compact operators, an \emph{operator of symbol $s$} being any operator differing from $S$ by a compact one, and in particular the reversed
product $\sum_m M_{b_m}\mathcal{A}_{a_m}$ then has the same symbol. For the general coefficients admitted below ($b_m \in L^\infty$ with $\nabla b_m \in C_0$
but $b_m \notin C_0$) this representation-independence fails, the first commutators need not be compact, and we accordingly work throughout with the
fixed ordered representations $\mathcal{A}_a M_b$, never appealing to symbol calculus modulo compacts for the operators themselves. Having proven the
$L^p$ compactness of the \emph{second-commutator remainder}
$R_j = \partial_j[\mathcal{A}_a, M_b] - S_j$ (Theorem \ref{thm:second_comm}(II)), and, for coefficients decaying at infinity, of the first
commutator $[\mathcal{A}_a, M_g]$ itself when $g \in C^\epsilon \cap C_0$ (Lemma \ref{lem:building}(2)), we mirror this generalization in the Banach
space setting using the algebraic ideal of compact operators. The undifferentiated commutator $[\mathcal{A}_a, M_b]$ need
not be compact for a general $b$ with $\nabla b \in C_0$ but $b \notin C_0$: by Theorem \ref{thm:inhomogeneous} it maps only into $W^{1,p}(\R^d)$,
whose embedding into $L^p(\R^d)$ is non-compact, and e.g.\ $b(x) = \sin(\log|x|)$ (smoothed near the origin), bounded and with $\nabla b \in C_0$, lies
in $\mathrm{BMO}\setminus\mathrm{VMO}$, so its Riesz-transform commutators are bounded but not compact. The construction below accordingly never uses
compactness of $[\mathcal{A}_a, M_b]$; it uses only the remainder $R_j$ and the first commutator with $C_0$ coefficients.
Let $S_1$ and $S_2$ be finite sums of standard operators of the form:
\begin{equation*}
    S_1 = \sum_{m} \mathcal{A}_{a_m} M_{b_m}, \quad S_2 = \sum_{k} \mathcal{A}_{c_k} M_{d_k}.
\end{equation*}

The compactness of the remainder rests on two elementary building blocks, which isolate the two ways a compact operator arises here: fractional smoothing
followed by multiplication by a decaying coefficient, and the commutator of a Fourier multiplier with a decaying H\"older multiplier.

\begin{lemma}[Compactness building blocks]\label{lem:building}
Let $e \in C^\infty(\Sp^{d-1})$ be homogeneous of degree zero, and $1 < p < \infty$.
\begin{enumerate}
    \item If $T \in \mathcal{L}\big(L^p(\R^d), W^{\sigma,p}(\R^d)\big)$ for some $\sigma > 0$ and $g \in C_0(\R^d)$, then $M_g T$ is compact on $L^p(\R^d)$.
    \item For $g \in C^\epsilon(\R^d) \cap C_0(\R^d)$ (bounded, uniformly $\epsilon$-H\"older, and vanishing at infinity in supremum norm), the commutator
    $[\mathcal{A}_e, M_g]$ is compact on $L^p(\R^d)$.
\end{enumerate}
\end{lemma}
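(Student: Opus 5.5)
For part (1) I would approximate $g$ by compactly supported functions and use that compact operators form a closed ideal. Choose $g_n \in C_c(\R^d)$ with $\norm{g - g_n}_{L^\infty} \to 0$, for instance $g_n = g\chi_n$ with the scaled cutoffs $\chi_n$ of Step~2 in the proof of Theorem \ref{thm:second_comm}. Then
\begin{equation*}
\norm{M_gT - M_{g_n}T}_{\mathcal{L}(L^p)} \le \norm{g-g_n}_{L^\infty}\,\norm{T}_{\mathcal{L}(L^p,W^{\sigma,p})}\to 0,
\end{equation*}
using the continuous embedding $W^{\sigma,p}\hookrightarrow L^p$. It therefore suffices to show that each $M_{g_n}T$ is compact. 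Pick $\tilde\chi_n \in C_c^\infty$ with $\tilde\chi_n\equiv 1$ on $\operatorname{supp} g_n$, so that $M_{g_n}T = M_{g_n}\,(\tilde\chi_n T)$. Multiplication by a smooth compactly supported function is bounded on $W^{\sigma,p}(\R^d)$ (I would reduce to $0<\sigma<1$ and use the Gagliardo seminorm, as in the local part of Step~1). Hence $\tilde\chi_n T$ maps $L^p$ boundedly into $W^{\sigma,p}$ functions supported in a fixed ball $\Omega$. By \cite[Theorem 7.1]{DiNezza2012} the embedding of this space into $L^p(\Omega)$ is compact, and the further bounded map $M_{g_n}$ preserves compactness.

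For part (2) I would use the same density device together with a uniform commutator bound. First I would show that $g\mapsto[\mathcal{A}_e,M_g]$ is continuous from $(C_0,\norm{\cdot}_{L^\infty})$ into $\mathcal{L}(L^p)$; indeed $\norm{[\mathcal{A}_e,M_g]}\le 2\norm{\mathcal{A}_e}\norm{g}_{L^\infty}$ by the Hörmander multiplier theorem. So it suffices to handle a sup-norm dense class. Mollifications times cutoffs, $g_n = (g*\rho_{1/n})\chi_n \in C_c^\infty$, converge to $g$ uniformly because $g$ is uniformly continuous and vanishes at infinity. The Hölder hypothesis is not needed at this point; it only certifies uniform continuity.

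For $g_n\in C_c^\infty$, Theorem \ref{thm:inhomogeneous} applies: Assumptions \ref{ass:spatial_reg} and \ref{ass:spatial_bound} hold, and $e\in C^\infty$ satisfies Assumption \ref{ass:symbol}. So $[\mathcal{A}_e,M_{g_n}]:L^p\to W^{1,p}$ is bounded. I would then split with a cutoff $\tilde\chi_n\equiv1$ near $\operatorname{supp} g_n$.

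The local part $\tilde\chi_n[\mathcal{A}_e,M_{g_n}]$ is compact by part (1) with $\sigma=1$ and coefficient $\tilde\chi_n\in C_0$.

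The tail $(1-\tilde\chi_n)[\mathcal{A}_e,M_{g_n}]$ must be treated separately. Outside $\operatorname{supp}\tilde\chi_n$ we have $g_n=0$, so the tail reduces to $(1-\tilde\chi_n(x))\mathcal{A}_e(g_nu)(x)$. This is an integral operator with kernel $(1-\tilde\chi_n(x))\,k(x-y)\,g_n(y)$, where $k$ is the kernel of $\mathcal{A}_e$. Since $|k(z)|\le C|z|^{-d}$ away from the origin and the two supports are separated, the kernel is bounded by $C(1+|x|)^{-d}\mathbf 1_{\operatorname{supp} g_n}(y)$. This lies in $L^p_x(L^{p'}_y)$ because $pd>d$. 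The operator is therefore Hille--Tamarkin and compact, exactly as in Step~1.

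The main obstacle I expect is this tail estimate. It needs the pointwise bound on the order-zero kernel $k$ (with any mean of $e$ removed, the mean contributing only a multiple of the identity, which commutes with $M_g$). For smooth $e$ this bound is classical \cite[Proposition 2.4.8]{Grafakos2014}. A secondary point is that part (1) requires $W^{\sigma,p}$-boundedness of the cutoff multiplication, which is routine for $\sigma\le1$.
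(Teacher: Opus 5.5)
Your proof is correct. Part (1) follows the paper's idea (compact embedding on a ball plus smallness of $g$ at infinity), organised as the operator-norm limit $M_{g\chi_n}T \to M_gT$ and closedness of $\mathcal{K}(L^p)$ rather than the paper's direct diagonal extraction; your cutoff-multiplier step on $W^{\sigma,p}$ is harmless but unnecessary, since restriction to a ball already suffices.

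Part (2) takes a genuinely different route. The paper works directly with H\"older $g$: it first proves the quantitative smoothing $[\mathcal{A}_e,M_g]\in\mathcal{L}(L^p,B^\epsilon_{p,\infty})$, with norm $\le C\norm{g}_{C^\epsilon}$, by the finite-difference/Schur argument of Appendix~\ref{app:remainder}. It then gets compactness for $g\in C^\epsilon_c$ from part (1) with fractional $\sigma=\epsilon'$ plus the Hille--Tamarkin tail, and truncates $g\chi_n\to g$ in sup norm only at the end. You apply the Lipschitz bound $\norm{[\mathcal{A}_e,M_\gamma]}_{\mathcal{L}(L^p)}\le 2\norm{\mathcal{A}_e}_{\mathcal{L}(L^p)}\norm{\gamma}_{L^\infty}$ first and reduce to $g_n\in C_c^\infty$. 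You then import the $L^p\to W^{1,p}$ bound of Theorem~\ref{thm:inhomogeneous}, so only the classical $\sigma=1$ Rellich theorem is needed. There is no circularity, since that theorem rests only on Part I of Theorem~\ref{thm:second_comm}.

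What your route buys: it shows the H\"older hypothesis is superfluous. Every $g\in C_0$ is uniformly continuous, so you in fact get compactness for all $g\in C_0(\R^d)$, and no new estimate is required. The price is that you invoke Calder\'on's commutator theorem merely to treat smooth compactly supported coefficients. You also lose the order-$(-\epsilon)$ smoothing for the original $g$, which Remark~\ref{rem:firstcomm} identifies as the real addition of part (2) over the known $L^p$ first commutation lemma. The paper's argument, by contrast, uses only Calder\'on--Zygmund kernel bounds for $k_e$, Schur's test, and uniformly bounded truncations.
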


\begin{proof}
(1) Let $\{u_n\}$ be bounded in $L^p$; then $\{T u_n\}$ is bounded in $W^{\sigma,p}$. Given $\eta > 0$, choose $R$ with $|g| < \eta$ on $\{|x| > R\}$.
On the ball $B_R$ the embedding $W^{\sigma,p}(B_R) \hookrightarrow L^p(B_R)$ is compact (by the classical Rellich--Kondrachov theorem when
$\sigma \ge 1$, and by its fractional counterpart \cite[Theorem 7.1]{DiNezza2012} when $\sigma \in (0,1)$), so a subsequence of $\mathbf{1}_{B_R} T u_n$
converges in $L^p$, and multiplication by $g \in L^\infty$ preserves convergence; outside $B_R$,
$\norm{(1 - \mathbf{1}_{B_R}) g\, T u_n}_{L^p} \le \eta \sup_n\norm{T u_n}_{L^p}$.
A diagonal argument over $\eta \to 0$ shows $\{M_g T u_n\}$ is precompact, so $M_g T$ is compact.

(2) The commutator has kernel $k_e(x-y)\big(g(y) - g(x)\big)$, where $k_e$ is the Calder\'on--Zygmund kernel of $\mathcal{A}_e$, $|k_e(z)| \le C|z|^{-d}$;
since $|g(y)-g(x)| \le [g]_{C^\epsilon}|x-y|^\epsilon$ it is bounded by $C[g]_{C^\epsilon}|x-y|^{-d+\epsilon}$. The finite-difference argument of
Appendix \ref{app:remainder} applies to this order-$(-\epsilon)$ kernel, indeed more simply: the increment of the commutator kernel $k_e(x-y)(g(y)-g(x))$
under $x\mapsto x+h$ splits as $[k_e(x+h-y)-k_e(x-y)]\,(g(y)-g(x+h)) + k_e(x-y)\,(g(x)-g(x+h))$, whose first term obeys the Schur bound
$\le C[g]_{C^\epsilon}|h|^\epsilon$ (mean value theorem for $k_e$ together with $|g(y)-g(x+h)| \le [g]_{C^\epsilon}|x+h-y|^\epsilon$) and whose second term is
$-\Delta_h g(x)$ times the uniformly bounded truncated Calder\'on--Zygmund operator $\mathcal{A}_e$, hence of operator norm $O(|h|^\epsilon)$; this yields
$[\mathcal{A}_e, M_g] \in \mathcal{L}(L^p, B^\epsilon_{p,\infty})$ with norm $\le C\norm{g}_{C^\epsilon}$ (the $L^p$ component of the Besov norm being
controlled by $\norm{[\mathcal{A}_e, M_g]}_{\mathcal{L}(L^p)} \le 2\norm{\mathcal{A}_e}_{\mathcal{L}(L^p)}\norm{g}_{L^\infty}$, the finite-difference
component by $C[g]_{C^\epsilon}$), hence $[\mathcal{A}_e, M_g] \in \mathcal{L}(L^p, W^{\epsilon',p})$ for $\epsilon' \in (0,\epsilon)$.
For $g \in C_c^\epsilon$ of compact support the operator is compact by the truncation of Step 1 in the proof of Theorem \ref{thm:second_comm}: the local part
$M_{\tilde\chi}[\mathcal{A}_e, M_g]$ is compact by part (1) with $\sigma = \epsilon'$, and the non-local tail is a Hille--Tamarkin operator with kernel
$|(1-\tilde\chi(x))k_e(x-y)g(y)| \le C(1+|x|)^{-d}\mathbf{1}_{\operatorname{supp}g}(y) \in L^p_x(L^{p'}_y)$ (as $dp > d$). For general
$g \in C^\epsilon \cap C_0$, the truncations $g_n = g\chi_n \in C_c^\epsilon$ satisfy
$\norm{g - g_n}_{L^\infty} = \norm{g(1-\chi_n)}_{L^\infty} \le \sup_{|x| \ge n}|g(x)| \to 0$ by the sup-norm decay $g \in C_0$;
since $\norm{[\mathcal{A}_e, M_\gamma]}_{\mathcal{L}(L^p)} \le 2\norm{\mathcal{A}_e}_{\mathcal{L}(L^p)}\norm{\gamma}_{L^\infty}$ for every
$\gamma \in L^\infty$, it follows that $[\mathcal{A}_e, M_{g_n}] \to [\mathcal{A}_e, M_g]$ in operator norm; as $\mathcal{K}(L^p)$ is closed,
$[\mathcal{A}_e, M_g]$ is compact. As in Theorem \ref{thm:second_comm}, the uniform H\"older seminorm $[g]_\epsilon$ is consumed only by the
compact-support smoothing above, while the passage to the global limit uses solely the sup-norm decay of $g$.
\end{proof}

\begin{remark}[Overlap with the first commutation lemma]\label{rem:firstcomm}
Part (2) (compactness of $[\mathcal{A}_e, M_g]$ on $L^p(\R^d)$ for $g \in C^\epsilon \cap C_0$) is the $L^p$ first commutation lemma,
whose compactness conclusion is already available in \cite{AntonicMisurMitrovic2018, MisurMitrovic2019}. What Lemma \ref{lem:building}(2) adds is the
quantitative order-$(-\epsilon)$ smoothing $[\mathcal{A}_e, M_g] \in \mathcal{L}(L^p, B^\epsilon_{p,\infty})$, which lets us organise the compactness
of the second-commutator remainder uniformly through Lemma \ref{lem:remainder}.
\end{remark}

\begin{theorem}[$L^p$ Commutation of Standard Operators]\label{thm:standard_op}
Assume all Fourier symbols $a_m, c_k \in C^\infty(\Sp^{d-1})$ are homogeneous of degree zero, and all spatial multipliers $b_m, d_k$ satisfy
Assumptions \ref{ass:spatial_reg} and \ref{ass:spatial_bound}. Then, the weak spatial derivative of the commutator $\partial_j [S_1, S_2]$
boundedly maps $L^p(\R^d)$ into $L^p(\R^d)$ and decomposes as:
\begin{equation*}
    \partial_j [S_1, S_2] = \operatorname{Op}(\xi_j \{s_1, s_2\}) + R_{1,2}
\end{equation*}
where $s_1(x,\xi) = \sum_m a_m(\xi)b_m(x)$ and $s_2(x,\xi) = \sum_k c_k(\xi)d_k(x)$ are the symbols of $S_1$ and $S_2$, $\{\cdot, \cdot\}$ denotes the
Poisson bracket, and $R_{1,2}$ is a compact operator on $L^p(\R^d)$.
\end{theorem}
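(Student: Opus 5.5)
By bilinearity of both sides in $(S_1,S_2)$ it suffices to treat a single pair of elementary terms, $S_1 = \mathcal{A}_a M_b$ and $S_2 = \mathcal{A}_c M_d$. Summing the finitely many remainders then preserves compactness, because $\mathcal{K}(L^p)$ is a linear subspace.

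\textbf{Algebraic reduction.} I would first expand the commutator algebraically:
\begin{equation*}
[\mathcal{A}_a M_b, \mathcal{A}_c M_d] = \mathcal{A}_a M_b \mathcal{A}_c M_d - \mathcal{A}_c M_d \mathcal{A}_a M_b .
\end{equation*}
I would then move the multipliers to canonical positions using $M_b\mathcal{A}_c = \mathcal{A}_c M_b - [\mathcal{A}_c, M_b]$ and $M_d\mathcal{A}_a = \mathcal{A}_a M_d - [\mathcal{A}_a, M_d]$. Since $\mathcal{A}_a\mathcal{A}_c = \mathcal{A}_c\mathcal{A}_a$ and $M_bM_d = M_dM_b$, the leading terms $\mathcal{A}_a\mathcal{A}_c M_{bd}$ cancel. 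What remains is
\begin{equation*}
[S_1,S_2] = -\mathcal{A}_a[\mathcal{A}_c, M_b]M_d + \mathcal{A}_c[\mathcal{A}_a, M_d]M_b .
\end{equation*}

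\textbf{Applying the derivative.} Next I apply $\partial_j$. It commutes with the Fourier multipliers $\mathcal{A}_a$ and $\mathcal{A}_c$, so it falls on the commutators. By Theorem \ref{thm:second_comm}(II),
\begin{equation*}
\partial_j[\mathcal{A}_c, M_b] = S_j(c,b) + R_j(c,b), \qquad S_j(c,b) = \operatorname{Op}\big(\xi_j \nabla_\xi c\cdot\nabla_x b\big),
\end{equation*}
and similarly for $\partial_j[\mathcal{A}_a, M_d]$. Boundedness on $L^p$ then follows immediately: $\mathcal{A}_a$ and $\mathcal{A}_c$ are bounded by H\"ormander, $M_b$ and $M_d$ are bounded by Assumption \ref{ass:spatial_bound}, and $\partial_j[\cdot,\cdot]$ is bounded by Part I. Moreover, since $R_j$ is compact, the terms $\mathcal{A}_a R_j(c,b) M_d$ and $\mathcal{A}_c R_j(a,d)M_b$ are compact by the ideal property.

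\textbf{Identifying the Poisson bracket.} I then need to identify, modulo compacts, the two operators
\begin{equation*}
-\mathcal{A}_a S_j(c,b) M_d + \mathcal{A}_c S_j(a,d) M_b
\end{equation*}
with $\operatorname{Op}\big(\xi_j\{s_1,s_2\}\big)$. Here I take the ordered quantization $\operatorname{Op}(\sum \alpha(\xi)\beta(x)) = \sum \mathcal{A}_\alpha M_\beta$, consistent with the paper's convention of fixed ordered representations, and $\{s_1,s_2\} = \nabla_\xi s_1\cdot\nabla_x s_2 - \nabla_x s_1\cdot\nabla_\xi s_2$. Writing $S_j(c,b)=\sum_k \mathcal{A}_{\xi_j\partial_k c}\, M_{\partial_k b}$ (the symbols $\xi_j\partial_{\xi_k}c$ are homogeneous of degree zero and smooth), the first term becomes $-\sum_k \mathcal{A}_{a\,\xi_j\partial_kc}\, M_{\partial_k b}\, M_d$. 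This is already ordered, with symbol $-\xi_j\, a\,\partial_{\xi_k}c\,\partial_{x_k}b\,d$. The second term gives $\xi_j\, c\,\partial_{\xi_k}a\, b\,\partial_{x_k}d$. These are exactly the bracket terms
\begin{equation*}
\partial_{\xi_k}(ab')\,\partial_{x_k}(cd) - \partial_{x_k}(ab)\,\partial_{\xi_k}(cd)
\end{equation*}
once one uses $\partial_{\xi}(a\,b)=b\,\partial_\xi a$ and collects factors. The cross terms $a\,b\,\partial c\,\partial d$-type do not appear, which is consistent, since $b$ and $d$ carry no $\xi$-dependence.

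\textbf{Main obstacle: ordering with non-decaying coefficients.} The step I expect to be delicate is this symbol bookkeeping, and it hinges on which ordering the paper's $\operatorname{Op}$ uses. If $\operatorname{Op}$ instead places the multipliers to the left, or if the quantization of the product $s_1s_2$ puts $\partial_k b$ between the multipliers in the other order, I must commute $M_{\partial_k b}$ past a multiplier. That reordering is where compactness is genuinely needed. Here $\partial_k b\in C^\epsilon\cap C_0$ by Assumption \ref{ass:spatial_reg}, the H\"older bound coming from $b\in C^{1,\epsilon}$ and the decay from $\nabla b\in C_0$. So Lemma \ref{lem:building}(2) gives compactness of $[\mathcal{A}_e, M_{\partial_k b}]$ for the smooth degree-zero symbols $e=\xi_j\partial_{\xi_k}c$ (or $e=a$).

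By contrast, $b$ and $d$ themselves do not decay, so one must never commute them past a multiplier. The rearrangement above is arranged so that only the differentiated coefficients $\partial_k b$ and $\partial_k d$ are moved, and I would check carefully that every reordering used involves a coefficient from $C^\epsilon\cap C_0$. All resulting errors are then compact, and they are absorbed into $R_{1,2}$.
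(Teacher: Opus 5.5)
Your proposal is correct and shares the paper's skeleton: the same bilinear reduction, the same identity $[S_1,S_2]=\mathcal{A}_c[\mathcal{A}_a,M_d]M_b-\mathcal{A}_a[\mathcal{A}_c,M_b]M_d$, Theorem \ref{thm:second_comm}(II) for the inner derivatives, and the ideal property for the remainder terms. Where you differ is the bookkeeping of the principal part.

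\textbf{The paper's route.} The paper's $S_j$ is left-ordered, $S_j=\sum_l M_{\partial_{x_l}b}\operatorname{Op}(\xi_j\partial_{\xi_l}a)$; this is what the kernel identity \eqref{eq:app-linear} produces. The paper also reads $\operatorname{Op}(\xi_j\{s_1,s_2\})$ left-ordered. It therefore has to move the non-decaying $M_b$ across $\mathcal{A}_cB^a_l$, where $B^a_l:=\operatorname{Op}(\xi_j\partial_{\xi_l}a)$. It does this through the splitting $[\mathcal{A}_c,M_g]B^a_lM_b+M_g[\mathcal{A}_cB^a_l,M_b]$ with $g=\partial_{x_l}d$. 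The second piece is compact only because $[\mathcal{A}_cB^a_l,M_b]\in\mathcal{L}(L^p,W^{1,p})$ (Theorem \ref{thm:inhomogeneous}) and $g\in C_0$ (Lemma \ref{lem:building}(1)).

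\textbf{Your route.} You instead read $\operatorname{Op}$ in the order $\mathcal{A}_\alpha M_\beta$ in which the standard operators themselves are written. Then the only reordering is $M_{\partial_{x_l}d}B^a_l\mapsto B^a_lM_{\partial_{x_l}d}$, compact by Lemma \ref{lem:building}(2). After it, $\mathcal{A}_cB^a_lM_{\partial_{x_l}d}M_b=\mathcal{A}_{c\,\xi_j\partial_{\xi_l}a}M_{b\,\partial_{x_l}d}$ matches the bracket exactly. This is lighter: you never need the $W^{1,p}$ gain of Theorem \ref{thm:inhomogeneous} for a commutator with a non-decaying coefficient, only Lemma \ref{lem:building}(2) applied to decaying H\"older coefficients.

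\textbf{Two corrections.} First, Theorem \ref{thm:second_comm} delivers $S_j$ with the multiplier on the left. So the reordering you present as conditional is always needed in your convention, and $S_j(c,b)=\sum_k\mathcal{A}_{\xi_j\partial_{\xi_k}c}M_{\partial_{x_k}b}$ holds only modulo a compact operator. Second, to reach the paper's left-ordered $\operatorname{Op}(\xi_j\{s_1,s_2\})$, moving only $\partial_{x_k}b$ and $\partial_{x_k}d$ does not suffice. You must commute the product $M_{b\,\partial_{x_l}d}$ past $\mathcal{A}_{c\,\xi_j\partial_{\xi_l}a}$. This is again Lemma \ref{lem:building}(2): a bounded Lipschitz $b$ is $C^\epsilon$, so $b\,\partial_{x_l}d\in C^\epsilon\cap C_0$. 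Hence the two orderings of $\xi_j\{s_1,s_2\}$ differ by a compact operator, and your argument proves the theorem under either reading.

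Conversely, the paper's device (a decaying left factor times a commutator that gains a derivative) shows that your rule ``never commute $b$ or $d$ past a multiplier'' is a design choice, not a necessity. The same mechanism is reused in Step 4 of Theorem \ref{thm:wave-variable}.
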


\begin{remark}
Requiring the Fourier symbols to be smooth is the standard setting for Tartar's standard operators and is no restriction in practice: in every application
the symbols arise from Fourier multipliers with $a_m, c_k \in C^\infty(\Sp^{d-1})$, and it is only the spatial coefficients $b_m, d_k$, the rough PDE coefficients,
that carry the low $C^{1,\epsilon}$ regularity. Smoothness of the symbols guarantees that the composite multipliers $\mathcal{A}_c B_l^a$ appearing below again
satisfy Assumption \ref{ass:symbol}, so that Theorem \ref{thm:second_comm} applies to them without any loss of derivatives.
\end{remark}

\begin{proof}
By bilinearity, it suffices to prove the theorem for single terms: $S_1 = \mathcal{A}_a M_b$ and $S_2 = \mathcal{A}_c M_d$. We expand the commutator
algebraically, using the fact that Fourier multipliers commute with each other ($\mathcal{A}_a \mathcal{A}_c = \mathcal{A}_c \mathcal{A}_a$) and
spatial multiplications commute with each other ($M_b M_d = M_d M_b$). 

By inserting commutators, we rewrite the operator products:
\begin{equation*}
    M_b \mathcal{A}_c = \mathcal{A}_c M_b - [\mathcal{A}_c, M_b] \quad \text{and} \quad M_d \mathcal{A}_a = \mathcal{A}_a M_d - [\mathcal{A}_a, M_d].
\end{equation*}

Substituting these into the expansion of $[S_1, S_2]$ yields:
\begin{align*}
    [S_1, S_2] &= \mathcal{A}_a (M_b \mathcal{A}_c) M_d - \mathcal{A}_c (M_d \mathcal{A}_a) M_b \\
               &= \mathcal{A}_a (\mathcal{A}_c M_b - [\mathcal{A}_c, M_b]) M_d - \mathcal{A}_c (\mathcal{A}_a M_d - [\mathcal{A}_a, M_d]) M_b \\
               &= \mathcal{A}_c [\mathcal{A}_a, M_d] M_b - \mathcal{A}_a [\mathcal{A}_c, M_b] M_d.
\end{align*}

Because Fourier multipliers commute with the weak spatial derivative $\partial_j$, we obtain:
\begin{equation*}
    \partial_j [S_1, S_2] = \mathcal{A}_c (\partial_j [\mathcal{A}_a, M_d]) M_b - \mathcal{A}_a (\partial_j [\mathcal{A}_c, M_b]) M_d.
\end{equation*}

By Theorem \ref{thm:second_comm} (Part II), we can decompose the inner derivatives into their principal pseudo-differential parts and compact remainders on
$L^p(\R^d)$:
\begin{align*}
    \partial_j [\mathcal{A}_a, M_d] &= \operatorname{Op}\left(\xi_j \sum_{l=1}^d \frac{\partial a}{\partial \xi_l} \frac{\partial d}{\partial x_l}\right) + R_{a,d}^{(j)}, \\
    \partial_j [\mathcal{A}_c, M_b] &= \operatorname{Op}\left(\xi_j \sum_{l=1}^d \frac{\partial c}{\partial \xi_l} \frac{\partial b}{\partial x_l}\right) + R_{c,b}^{(j)}.
\end{align*}

Substituting these decompositions isolates the principal part. The key structural observation is that the amplitude of the principal symbol factorises into
$\xi$- and $x$-dependent pieces, so that no second derivative of $d$ (or of $b$) ever appears:
\begin{equation*}
    \operatorname{Op}\Big(\xi_j \sum_{l=1}^d \tfrac{\partial a}{\partial \xi_l} \tfrac{\partial d}{\partial x_l}\Big) = \sum_{l=1}^d M_{\partial_{x_l}d}\, B_l^a, \qquad B_l^a := \operatorname{Op}(\xi_j \partial_{\xi_l}a),
\end{equation*}
with $B_l^a$ a Fourier multiplier of degree zero (and, symmetrically, $B_l^c := \operatorname{Op}(\xi_j \partial_{\xi_l}c)$).
Writing likewise $\operatorname{Op}(\xi_j\{s_1,s_2\}) = \sum_l M_{b\partial_{x_l}d}\mathcal{A}_c B_l^a - \sum_l M_{d\partial_{x_l}b}\mathcal{A}_a B_l^c$,
the remainder $R_{1,2} = \partial_j[S_1,S_2] - \operatorname{Op}(\xi_j\{s_1,s_2\})$ decomposes exactly as $R_{1,2} = I + II - III$, where
\begin{align*}
    I &= \mathcal{A}_c R_{a,d}^{(j)} M_b - \mathcal{A}_a R_{c,b}^{(j)} M_d, \\
    II &= \sum_{l=1}^d\big( \mathcal{A}_c M_{\partial_{x_l}d} B_l^a M_b - M_{b\partial_{x_l}d}\, \mathcal{A}_c B_l^a \big), \\
    III &= \sum_{l=1}^d\big( \mathcal{A}_a M_{\partial_{x_l}b} B_l^c M_d - M_{d\partial_{x_l}b}\, \mathcal{A}_a B_l^c \big).
\end{align*}

\emph{Term $I$ is compact.} The remainders $R_{a,d}^{(j)}$ and $R_{c,b}^{(j)}$ are compact on $L^p(\R^d)$ by Theorem \ref{thm:second_comm}(II),
while $\mathcal{A}_a, \mathcal{A}_c, M_b, M_d$ are bounded; hence $I$ is compact by the two-sided ideal property of $\mathcal{K}(L^p)$.

\emph{Terms $II$ and $III$ are compact.} Fix $l$ and set $g = \partial_{x_l}d \in C^\epsilon \cap C_0$ (uniformly H\"older since $d \in C^{1,\epsilon}$,
and $C_0$ by Assumption \ref{ass:spatial_reg}). Since $M_{b\partial_{x_l}d} = M_g M_b$, inserting intermediate terms yields the exact identity
\begin{equation*}
    \mathcal{A}_c M_g B_l^a M_b - M_g M_b\, \mathcal{A}_c B_l^a = \underbrace{[\mathcal{A}_c, M_g]\, B_l^a M_b}_{(\ast)} + \underbrace{M_g\, [\mathcal{A}_c B_l^a,\, M_b]}_{(\ast\ast)}.
\end{equation*}
In $(\ast)$, the commutator $[\mathcal{A}_c, M_g]$ is compact on $L^p$ by Lemma \ref{lem:building}(2) (here $g \in C^\epsilon \cap C_0$ and
$c \in C^\infty(\Sp^{d-1})$), and $B_l^a M_b$ is bounded, so $(\ast)$ is compact. In $(\ast\ast)$, the operator $\mathcal{A}_c B_l^a$ is a
Fourier multiplier with smooth symbol $c(\xi)\xi_j\partial_{\xi_l}a(\xi)$; by Theorem \ref{thm:second_comm}(I) together with the zero-order
bound of Theorem \ref{thm:inhomogeneous} (using $b \in C^{1,\epsilon} \cap L^\infty$), the commutator $[\mathcal{A}_c B_l^a, M_b]$ maps
$L^p(\R^d)$ boundedly into $W^{1,p}(\R^d)$, so left multiplication by $g = \partial_{x_l}d \in C_0$ makes $(\ast\ast)$ compact by Lemma
\ref{lem:building}(1) (with $\sigma = 1$). Each $l$-summand of $II$ is therefore compact, and $III$ is handled identically after exchanging
$(a,b) \leftrightarrow (c,d)$.

Being a finite sum of compact operators, $R_{1,2}$ is compact on $L^p(\R^d)$.

Finally, we evaluate the principal symbol. Factoring out $\xi_j$, the algebraic reduction of the main operator matches the definition of the Poisson bracket:
\begin{equation*}
    \xi_j \sum_{l=1}^d \left( \frac{\partial (ab)}{\partial \xi_l} \frac{\partial (cd)}{\partial x_l} - \frac{\partial (ab)}{\partial x_l} \frac{\partial (cd)}{\partial \xi_l} \right) = \xi_j \{s_1, s_2\}.
\end{equation*}

This confirms that $\partial_j [S_1, S_2] = \operatorname{Op}(\xi_j \{s_1, s_2\}) + R_{1,2}$, proving the theorem.
\end{proof}

\section{Comparison with the Classical $L^2$ Theory}\label{sec:comparison}

To place the preceding proof in context, we contrast the global $L^p$ extension with the classical $L^2$ methods developed by Tartar \cite{Tartar1990}
and later used by Antoni\'c and Lazar \cite{AntonicLazar2013}. The transition from Hilbert to Banach spaces breaks the algebraic symmetry of the problem,
requiring a shift from direct integral estimation to singular integral theory to construct H-distributions.

\subsection{The Regularity Threshold}
In the $L^2$ framework, Tartar assumes minimal regularity: the Fourier symbol $a(\xi)$ only requires bounded continuity (e.g., $a \in C^1$ or Lipschitz)
and the spatial multiplier $b$ merely requires an $L^1$-integrable Fourier transform of its gradient ($b \in X^1$). This is possible because of the
Plancherel theorem: an operator is bounded on $L^2$ if and only if its Fourier symbol is in $L^\infty$. 

In the $L^p$ regime ($p \neq 2$), the Plancherel isometry is lost, and boundedness of $\mathcal{A}_a$ instead rests on the H\"ormander--Mikhlin multiplier
theorem, which requires only the fractional Sobolev regularity $a \in W^{s_0,2}(\Sp^{d-1})$ with $s_0 > \frac{d}{2}$. Our elementary proof demands more:
the pointwise Calder\'on--Zygmund kernel bounds on which it relies force the stronger threshold $s > d+2$ of Assumption \ref{ass:symbol}, a distinction
detailed in Remark \ref{rem:sobolev}.

Furthermore, the classical minimal regularity for $b$ does not provide sufficient control for multilinear operators in $L^p$. The Coifman-Meyer
pseudo-differential framework requires the H\"older continuity ($C^{1,\epsilon}$) specified in Assumption \ref{ass:spatial_reg} to establish fractional smoothing.

\subsection{The Mechanism of Compactness}
The main divergence lies in the proof of compactness for the remainder operator $R_j$. In the classical $L^2$ theory the compactness is secured by different
technology rather than by localisation: Tartar's commutation lemmas are themselves global statements on $L^2(\R^d)$, and the escape of mass to infinity
is controlled by decay hypotheses on the multiplier (e.g.\ $b \in C_0$, or $\mathcal{F}(\nabla b) \in L^1$, which forces $\nabla b \in C_0$) combined with
Plancherel-based compactness criteria on the frequency side, where the commutator becomes an integral operator with kernel
$(a(\xi)-a(\eta))\widehat{b}(\xi-\eta)$ amenable to Hilbert-space approximation. What the Hilbert setting supplies is thus a compactness tool
unavailable in $L^p$, not a reduction of the problem to a bounded region. (The mechanism is not Hilbert--Schmidt: for $d \ge 2$
the commutator kernel $k_a(x-y)(b(y)-b(x)) \sim |x-y|^{-d+1}$, and the remainder kernel $\sim |x-y|^{-d+\epsilon}$, fail to be square-integrable near
the diagonal.)

The global $L^p$ setting has neither. Boundedness of $\mathcal{A}_a$ rests on multiplier theorems rather than on Plancherel, and the compactness of $R_j$
must be established directly on the whole of $\R^d$. Our Lemma \ref{lem:remainder} establishes that $R_j$ acts as a smoothing operator of order $-\epsilon$,
mapping into $W^{\epsilon',p}(\R^d)$ for every $\epsilon' < \epsilon$; but the Rellich--Kondrachov embedding fails on the unbounded domain $\R^d$,
since mass may escape to spatial infinity, so local smoothing alone does not suffice.

\subsection{The Global Domain Resolution}
The classical $L^2$ theory already imposes decay and regularity on the multiplier (for instance that $\nabla b$ possess an integrable Fourier
transform) and controls the escape of mass to spatial infinity through those decay hypotheses combined with Plancherel-based compactness,
rather than at the kernel level. Our global formulation must instead confront it directly. To recover compactness of $R_j$ on all of $\R^d$ we
impose only the sup-norm decay $\nabla b \in C_0(\R^d)$ (its gradient remaining uniformly H\"older) and, by constructing spatial truncations
$\chi_n$ and controlling $\nabla b_n - \nabla b$ in the supremum norm, force the non-compact tail of the operator to vanish uniformly.
Global compactness is thus recovered by exhibiting $R_j$ as the uniform-operator limit of locally compact truncations, the Banach-space
substitute for the compactness that Hilbert-space geometry supplies more directly. This global statement is strictly stronger than a local
one (the decay hypothesis $\nabla b \in C_0$ is precisely what Steps 2--3 pay for) and is of independent interest. The transport applications of Sections
\ref{sec:applications} and \ref{sec:wave_eq} do not, however, require this strength: every pairing
there carries a compactly supported factor in each slot, so one may replace each coefficient $b_j(t,\cdot), \rho, c$ by $\chi\,b_j(t,\cdot)$
etc.\ (with $\chi \in C_c^\infty$ equal to $1$ on a neighbourhood of the relevant $x$-support) without any error. Indeed, writing
$\beta := (1-\chi)b$, both $\beta$ and $\nabla\beta$ vanish on that neighbourhood; since the applications use the differentiated commutator
$\partial_{x_j}[\mathcal{A}_a, M_b]$ rather than $[\mathcal{A}_a, M_b]$ itself, this is what must be checked. Tested between slots $\varphi u_n, \varphi v_n$
whose $x$-supports lie where $\chi\equiv 1$, one has $[\mathcal{A}_a, M_\beta](\varphi u_n) = -\beta\,\mathcal{A}_a(\varphi u_n)$ (the term
$\mathcal{A}_a(\beta\varphi u_n)$ vanishing as $\beta\varphi u_n\equiv 0$), whence
$\partial_{x_j}[\mathcal{A}_a, M_\beta](\varphi u_n) = -(\partial_{x_j}\beta)\,\mathcal{A}_a(\varphi u_n) - \beta\,\partial_{x_j}\mathcal{A}_a(\varphi u_n)$;
both summands carry a factor $\beta$ or $\partial_{x_j}\beta$ that vanishes on $\operatorname{supp}(\varphi v_n)$, so the paired integral is exactly zero.
The applications thus consume only the compact-support case (Step 1).
The global lemma becomes indispensable elsewhere, notably in the fractional programme of Section \ref{sec:future}, where $[(-\Delta)^s, M_\varphi]$
carries kernel tails over all of $\R^d$ and exact localisation fails.

\section{Application I: $L^p$ Transport Equations for First-Order Scalar PDEs}\label{sec:applications}

The classical derivation of transport equations for H-measures relies on the Hilbert space geometry of $L^2$. By multiplying the governing equation by the
complex conjugate (or its time derivative) and using the chain rule, Tartar obtained symmetric algebraic energy identities. In the $L^p$ setting ($p \neq 2$),
this algebraic symmetry fractures. We recover a bicharacteristic flow by pairing the primary sequence with its canonical Nemyckij dual
$\Phi_p(u) = |u|^{p-2}u$ \cite{AntonicErcegMisur2021}. Since $\Phi_p$ maps $L_{loc}^p$ boundedly into $L_{loc}^{p'}$, this pairing generates a well-defined
H-distribution; the one subtlety, that $\Phi_p(u_n)$ need not itself converge weakly to zero, is resolved by the following lemma.

\begin{lemma}[Canonical H-distribution]\label{lem:canonical}
Let $1 < p < \infty$ and let $w_n \rightharpoonup 0$ in $L^p_{loc}(\R^d)$; set $z_n := \Phi_p(w_n) = |w_n|^{p-2}w_n$. Then:
\begin{enumerate}
    \item $\{z_n\}$ is bounded in $L^{p'}_{loc}(\R^d)$ (indeed $\int_K |z_n|^{p'} = \int_K |w_n|^p$ for every compact $K$), so along a subsequence
    $z_n \rightharpoonup \bar z$ in $L^{p'}_{loc}$ for some $\bar z \in L^{p'}_{loc}$, and $\tilde z_n := z_n - \bar z \rightharpoonup 0$.
    \item The pair $(w_n, \tilde z_n)$ generates an H-distribution $\mu \in \mathcal{D}'(\R^d \times \Sp^{d-1})$ in the sense of Theorem \ref{thm:h_dist_existence}.
    \item For every $A_n \rightharpoonup 0$ in $L^p_{loc}$, all $\varphi_1, \varphi_2 \in C_c^\infty(\R^d)$, and every $a \in C^\infty(\Sp^{d-1})$,
    \begin{equation*}
        \lim_{n} \int \varphi_1 A_n\, \mathcal{A}_a(\varphi_2 z_n) = \lim_{n} \int \varphi_1 A_n\, \mathcal{A}_a(\varphi_2 \tilde z_n),
    \end{equation*}
    since $\mathcal{A}_a(\varphi_2 \bar z) \in L^{p'}$ is fixed while $\varphi_1 A_n \rightharpoonup 0$ in $L^p$.
\end{enumerate}
Consequently ``the H-distribution generated by the canonical pair $(w_n, \Phi_p(w_n))$'' is unambiguously defined as $\mu$; taking $A_n = w_n$,
$\varphi_1 = \varphi_2 = \varphi$, $a = 1$ gives the non-negative diagonal $\langle \mu, \varphi^2 \rangle = \lim_n \int |w_n|^p \varphi^2$, the
$L^p$ concentration defect. The same conclusions hold componentwise for vector sequences $\mathbf{w}_n \rightharpoonup 0$ in $L^p_{loc}(\R^d; \R^m)$
paired with $\Phi_p(\mathbf{w}_n)$, a single finite diagonal subsequence serving all $m$ components at once. The applications of Sections
\ref{sec:applications} and \ref{sec:wave_eq} require the analogue in which the sequences additionally carry an evolution parameter $t$
while $\mathcal{A}_a$ still acts in $x$ alone; that parametrised statement is more delicate: it fails for arbitrary weakly-null
sequences (Remark \ref{rem:inertcounterexample}), and is established, for the solution sequences to which we apply it, in Lemma \ref{lem:evolution}.
\end{lemma}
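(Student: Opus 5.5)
The argument splits along the three items of Lemma~\ref{lem:canonical}; each reduces to an elementary estimate combined with Theorem~\ref{thm:h_dist_existence}.

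\emph{Item (1): boundedness and extraction.} The identity $|z_n|^{p'} = |w_n|^{(p-1)p'} = |w_n|^p$ holds pointwise because $(p-1)p' = p$. Integrating over any compact $K$ gives $\int_K |z_n|^{p'} = \int_K |w_n|^p$. A weakly convergent sequence is bounded, so the right-hand side is bounded uniformly in $n$, and hence $\{z_n\}$ is bounded in $L^{p'}_{loc}$. Since $1<p'<\infty$, $L^{p'}(B_k)$ is reflexive for each ball $B_k = B(0,k)$. Bounded sets are therefore weakly sequentially precompact, and I extract nested subsequences on $B_1, B_2, \dots$. A diagonal subsequence then converges weakly in $L^{p'}(B_k)$ for every $k$. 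The limits are compatible under restriction, so they glue to a single $\bar z \in L^{p'}_{loc}$, and $\tilde z_n := z_n - \bar z \rightharpoonup 0$ in $L^{p'}_{loc}$.

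\emph{Item (2): existence of the H-distribution.} With $w_n \rightharpoonup 0$ in $L^p_{loc}$ and $\tilde z_n \rightharpoonup 0$ in $L^{p'}_{loc}$, the pair satisfies the hypotheses of Theorem~\ref{thm:h_dist_existence} verbatim. A further subsequence therefore yields $\mu$. I keep the same name for the sequence after passing to this subsequence; item (1) survives, since subsequences of weakly convergent sequences converge to the same limit.

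\emph{Item (3): the fixed part drops out.} Write $\int \varphi_1 A_n \mathcal{A}_a(\varphi_2 z_n)$ as the sum of $\int \varphi_1 A_n \mathcal{A}_a(\varphi_2 \tilde z_n)$ and $\int \varphi_1 A_n \mathcal{A}_a(\varphi_2 \bar z)$. Since $\varphi_2\bar z \in L^{p'}(\R^d)$ and $a \in C^\infty(\Sp^{d-1})$, the H\"ormander--Mikhlin bound (as in Part~I of Theorem~\ref{thm:second_comm}) gives $g := \mathcal{A}_a(\varphi_2\bar z) \in L^{p'}(\R^d)$, a fixed function. Let $K = \operatorname{supp}\varphi_1$. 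Then $\int \varphi_1 A_n g = \int_K A_n(\varphi_1 g)$ with $\varphi_1 g \in L^{p'}(K)$. Local weak convergence $A_n \rightharpoonup 0$ in $L^p(K)$ sends this term to $0$. Consequently the two limits coincide, one existing if and only if the other does.

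For the consequences, take $A_n = w_n$, $\varphi_1=\varphi_2=\varphi$, $a=1$. Then $\mathcal{A}_1$ is the identity, so
\[
\langle\mu,\varphi^2\rangle = \lim_n\int \varphi^2 w_n \tilde z_n = \lim_n \int \varphi^2 w_n z_n = \lim_n\int\varphi^2|w_n|^p \ge 0,
\]
using item (3) for the middle equality. Unambiguity is read in the same way: any H-distribution built from the pair $(w_n,\Phi_p(w_n))$ is, along the extracted subsequence, the one built from $(w_n,\tilde z_n)$. The vector case follows componentwise, with one finite diagonal extraction over the $m$ components (and the $m\times m$ pairs).

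The only genuinely delicate point is bookkeeping of subsequences. The weak limit $\bar z$ generally depends on the subsequence, so ``unambiguous'' must be understood as relative to the subsequence fixed in item (1). I would state this explicitly rather than claim independence.
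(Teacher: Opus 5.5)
Your proposal is correct and follows the paper's route. For (1) you use the pointwise identity $|z_n|^{p'}=|w_n|^p$ plus reflexivity; for (2) a direct appeal to Theorem~\ref{thm:h_dist_existence}; and for (3) the splitting $z_n=\tilde z_n+\bar z$, with $\mathcal{A}_a(\varphi_2\bar z)\in L^{p'}$ a fixed function paired against $\varphi_1A_n\rightharpoonup 0$. Your explicit diagonal extraction over balls, and your remark that $\bar z$ (hence $\mu$) is determined only relative to the extracted subsequence, make precise the paper's terser appeal to ``reflexivity of $L^{p'}_{loc}$'' and agree with the uniqueness clause of Theorem~\ref{thm:h_dist_existence}.
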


\begin{proof}
For (1), $\int_K |z_n|^{p'} = \int_K |w_n|^{(p-1)p'} = \int_K |w_n|^p \le \norm{w_n}_{L^p(K)}^p$ is bounded, and reflexivity of $L^{p'}_{loc}$ yields a
weakly convergent subsequence with limit $\bar z$. Claim (2) is immediate from Theorem \ref{thm:h_dist_existence}, both $w_n$ and $\tilde z_n$ being weakly
null. For (3), the difference of the two integrals equals $\int \varphi_1 A_n\, \mathcal{A}_a(\varphi_2 \bar z)$;
the function $g := \mathcal{A}_a(\varphi_2 \bar z)$ lies in $L^{p'}(\R^d)$ because $\mathcal{A}_a$ is bounded on $L^{p'}$ and $\varphi_2 \bar z \in L^{p'}$
has compact support, while $\varphi_1 A_n \rightharpoonup 0$ in $L^p$, so $\int (\varphi_1 A_n)\, g \to 0$.
\end{proof}

\begin{remark}[Failure of the naive inert-parameter construction]\label{rem:inertcounterexample}
One might expect Lemma \ref{lem:canonical} to extend verbatim to sequences on $\R_t\times\R^d_x$ with $t$ an inert parameter and $\mathcal{A}_a$
acting in $x$ alone. This is false for general weakly-null sequences. Fix $1<p<\infty$ and set, on $\R_t\times\R^d_x$,
\begin{equation*}
    w_n(t,x) := \cos(nt), \qquad z_n := \Phi_p(w_n) = |\cos nt|^{p-2}\cos(nt),
\end{equation*}
both independent of $x$. By the Riemann--Lebesgue lemma $w_n\rightharpoonup 0$ in $L^p_{loc}$ and $z_n\rightharpoonup 0$ in $L^{p'}_{loc}$;
the periodic mean of $z_n$ vanishes, so $\bar z=0$ and $\tilde z_n=z_n$, a bona fide canonical pair. Since $\mathcal{A}_a$ acts only in $x$
it leaves the $x$-independent factor $z_n$ untouched, and $w_n z_n=|\cos nt|^p\rightharpoonup c_p:=\frac1{2\pi}\int_0^{2\pi}|\cos\tau|^p\,\rd\tau>0$.
Hence, for $\varphi_i(t,x)=\chi(t)\psi_i(x)$,
\begin{equation*}
    \lim_n\int_{\R\times\R^d}\varphi_1 w_n\,\mathcal{A}_a(\varphi_2 z_n)\,\rd x\,\rd t
    = c_p\int_\R\chi^2\,\rd t\;\int_{\R^d}\psi_1\,\mathcal{A}_a\psi_2\,\rd x.
\end{equation*}
Were this limit equal to $\langle\mu,\varphi_1\varphi_2\,a\rangle$ for a distribution $\mu$ on $(\R\times\R^d)\times\Sp^{d-1}$, then taking $\psi_1,\psi_2$
with disjoint supports, so that $\varphi_1\varphi_2\equiv 0$, would force the right-hand side to vanish. But
$\int\psi_1\mathcal{A}_a\psi_2=\iint\psi_1(x)\,k_a(x-y)\,\psi_2(y)\,\rd y\,\rd x$, with $k_a$ the off-diagonal convolution kernel of $\mathcal{A}_a$,
is nonzero for suitable disjoint bumps unless $a$ is constant. No such $\mu$ exists: pure temporal oscillations are invisible to the $x$-multiplier yet
correlate the canonical pair. The obstruction is structural: $[\mathcal{A}_a,M_{\psi_1}]$ commutes with multiplication by $e^{int}$, hence is never
compact on $L^{p'}(\R\times\R^d)$, so the factorisation underpinning Theorem \ref{thm:h_dist_existence} breaks down, and is removed only by invoking
the evolution equation. Indeed $w_n$ solves no transport equation with $L^p_{loc}$ datum, since $\partial_t w_n=-n\sin(nt)$ is unbounded; equivalently
the slice $t\mapsto\langle w_n(t,\cdot),\theta\rangle=\cos(nt)\int\theta$ is not equicontinuous, precisely the hypothesis \textup{(ii)} of
Lemma \ref{lem:evolution} that solution sequences do satisfy.
\end{remark}

\begin{lemma}[Evolution H-distribution]\label{lem:evolution}
Let $1<p<\infty$, let $I\subset\R$ be a bounded open interval and $\Omega\subset\R^d$ open. Suppose $u_n\rightharpoonup 0$ in $L^p_{loc}(I\times\Omega)$
and $v_n\rightharpoonup 0$ in $L^{p'}_{loc}(I\times\Omega)$ satisfy, for every $J\Subset I$ and every compact $K\subset\Omega$:
\begin{enumerate}
    \item[\textup{(i)}] \emph{uniform time-slices:} $\displaystyle\sup_n\operatorname*{ess\,sup}_{t\in J}\big(\norm{u_n(t,\cdot)}_{L^p(K)}+\norm{v_n(t,\cdot)}_{L^{p'}(K)}\big)<\infty$;
    \item[\textup{(ii)}] \emph{equicontinuous slices:} for each $\theta\in C_c^\infty(\Omega)$ the maps $t\mapsto\langle u_n(t,\cdot),\theta\rangle$ and $t\mapsto\langle v_n(t,\cdot),\theta\rangle$ admit continuous representatives that are equicontinuous on $J$, uniformly in $n$.
\end{enumerate}
Then, along a subsequence, there is a distribution $\mu\in\mathcal{D}'(I\times\Omega\times\Sp^{d-1})$, of order $0$ in $(t,x)$ and of finite order in the
sphere variable (the pairing being continuous in $\norm{a}_{C^\kappa(\Sp^{d-1})}$ with $\kappa=\lfloor d/2\rfloor+1$ on tensor products, so that by the
kernel theorem for distributions of anisotropic order the anisotropic order of $\mu$ on the product is $\le d(\kappa+2)$ in $\xi$ \cite{AntonicErcegMisur2021}),
such that for all $\varphi_1,\varphi_2\in C_c^\infty(I\times\Omega)$ and $a\in C^\infty(\Sp^{d-1})$,
\begin{equation*}
    \lim_n\int_{I\times\Omega}\varphi_1\,u_n\,\mathcal{A}_a(\varphi_2 v_n)\,\rd x\,\rd t=\langle\mu,\varphi_1\varphi_2\,a\rangle,
\end{equation*}
where $\mathcal{A}_a$ acts in the $x$-variable alone. Exhausting $\R\times\R^d$ by such $I\times\Omega$ extends $\mu$ to
$\mathcal{D}'(\R\times\R^d\times\Sp^{d-1})$. Hypotheses \textup{(i)}--\textup{(ii)} hold for the solution sequences of Theorems
\textup{\ref{thm:bicharacteristic_first_order}}, \textup{\ref{thm:bicharacteristic_pwave}}, and \textup{\ref{thm:wave-variable}},
as verified in their proofs. The standing weak-nullity $v_n\rightharpoonup 0$ holds outright in the wave cases (Theorems
\textup{\ref{thm:bicharacteristic_pwave}}, \textup{\ref{thm:wave-variable}}), where $\Phi_2=\operatorname{id}$ and $v_n=w_n\rightharpoonup 0$;
in the scalar case $p\neq 2$ (Theorem \textup{\ref{thm:bicharacteristic_first_order}}) the canonical dual $\Phi_p(u_n)$ need not be weakly null,
and the lemma is applied to the shifted dual $\tilde v_n:=\Phi_p(u_n)-\bar v$ ($\bar v$ its weak $L^{p'}_{loc}$ limit along a subsequence),
the pairing being insensitive to the shift by Lemma \ref{lem:canonical}(3).
\end{lemma}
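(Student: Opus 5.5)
The plan is to build $\mu$ from a bilinear form and show that this form is continuous for the right topology. The central point is that the obstruction of Remark \ref{rem:inertcounterexample} is an oscillation in $t$, and hypothesis (ii) rules it out.

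\emph{Reduction to separated variables.} By density and multilinearity it suffices to treat tensor test functions $\varphi_i(t,x)=\chi_i(t)\psi_i(x)$ with $\chi_i\in C_c^\infty(I)$ and $\psi_i\in C_c^\infty(\Omega)$. Fix $J\Subset I$ containing both $t$-supports and a compact $K\subset\Omega$ containing both $x$-supports. For each fixed $t$, write
\[
F_n(t)=\int\psi_1 u_n(t)\,\mathcal{A}_a(\psi_2 v_n(t))\,\rd x .
\]
Then the target integral equals $\int\chi_1\chi_2F_n\,\rd t$. By (i) and the H\"ormander--Mikhlin bound, $|F_n(t)|\le C\norm{a}_{C^\kappa(\Sp^{d-1})}$ uniformly in $n$ and in $t\in J$. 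This gives the claimed order~$0$ in $(t,x)$ and order $\kappa$ in $\xi$.

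\emph{Slice-wise H-distributions.} For each fixed $t$ in a countable dense set $D\subset J$, the slices $u_n(t)$ and $v_n(t)$ are bounded in $L^p(K)$ and $L^{p'}(K)$. By Theorem \ref{thm:h_dist_existence} (in the localised form of \cite{AntonicErcegMisur2021}), a further subsequence yields the limit $\lim_n F_n(t)=\langle\mu_t,\psi_1\psi_2a\rangle$ along a common diagonal subsequence for all $t\in D$. The slices need only converge weakly along that subsequence, not to zero, and (ii) gives this: $\langle u_n(t),\theta\rangle$ is equicontinuous and, by $u_n\rightharpoonup0$ in $(t,x)$, converges to $0$ in the distributional sense in $t$. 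By Arzel\`a--Ascoli it therefore converges to $0$ uniformly on $J$, and the same holds for $v_n$. Hence every slice is weakly null, and the theorem applies slice by slice.

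\emph{Main step: equicontinuity of $F_n$ in $t$.} This is the step I expect to be hardest. The plan is to write $\mathcal{A}_a=\mathcal{A}_a^{\le N}+\mathcal{A}_a^{>N}$ via a Littlewood--Paley split, or alternatively to regularise $\psi_2v_n$ by mollification in $x$ at scale $\delta$. For the mollified part, $F_n^\delta(t)$ is a finite-rank-type expression in the pairings $\langle u_n(t),\theta\rangle$ and $\langle v_n(t),\theta'\rangle$ with $\theta,\theta'$ ranging over a compact family; more precisely, $\mathcal{A}_a(\psi_2\rho_\delta*v_n)$ restricted to $K$ is smooth in $x$ with bounds uniform in $n$. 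The equicontinuity in (ii), upgraded by (i) and a compactness (Arzel\`a--Ascoli in the $\theta$-variable) argument, then makes $F_n^\delta$ equicontinuous on $J$. What I am unsure of is the mollification error $F_n-F_n^\delta$: it is not small uniformly in $n$ for rough $v_n$. So I would instead use (ii) only to show that $\int\chi F_n\,\rd t$ is determined by the slice limits. Concretely, I would approximate $\chi_1\chi_2$ by step functions in $t$ and use the uniform bound on $F_n$ (dominated convergence in $t$, with $F_n(t)\to\langle\mu_t,\cdot\rangle$ pointwise for a.e.\ $t$ after extending from $D$ via equicontinuity of the slices). This gives
\[
\lim_n\int\chi_1\chi_2F_n\,\rd t=\int\chi_1\chi_2\langle\mu_t,\psi_1\psi_2a\rangle\,\rd t .
\]

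\emph{Assembly.} Define $\langle\mu,\chi\psi a\rangle:=\int\chi(t)\langle\mu_t,\psi a\rangle\,\rd t$. The bound above shows that it is separately continuous with the stated orders. The kernel theorem for anisotropic distributions \cite{AntonicErcegMisur2021} then produces $\mu\in\mathcal{D}'(I\times\Omega\times\Sp^{d-1})$ of order $\le d(\kappa+2)$ in $\xi$. Uniqueness on the subsequence follows from density of tensor products. The extension to $\R\times\R^d$ comes from a diagonal exhaustion.
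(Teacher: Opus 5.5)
There is a genuine gap, at the step you flagged yourself. Your slice limits exist only for $t$ in the countable, hence null, set $D$. Dominated convergence in $t$ would therefore need $F_n(t)$ to converge for a.e.\ $t$ along one subsequence. ``Extending from $D$ via equicontinuity of the slices'' cannot supply this: (ii) controls $u_n(t,\cdot)$ and $v_n(t,\cdot)$ only weakly, while $F_n(t)$ pairs the two weakly varying factors bilinearly. In fact the required convergence is false under the lemma's hypotheses. Take $p=2$, $\omega(s)=(\cos s,\sin s,0,\dots,0)$ and $u_n=v_n=\cos(n\,\omega(nt)\cdot x)$. Integrating by parts along $\omega(nt)$ gives $|\langle u_n(t,\cdot),\theta\rangle|\le n^{-1}\norm{\nabla\theta}_{L^1}$ uniformly in $t$, so (i), (ii) and weak nullity all hold. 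For $a(\xi)=\xi_1^2/|\xi|^2$ one nevertheless has $F_n(t)=\tfrac12\cos^2(nt)\int\psi_1\psi_2\,\rd x+o(1)$ uniformly in $t$. By Weyl's theorem, for every fixed increasing $(n_k)$ the sequence $(n_kt)$ is equidistributed mod $2\pi$ for a.e.\ $t$. Hence along any subsequence $F_{n_k}(t)$ diverges for a.e.\ $t$, whereas $\int\chi_1\chi_2F_n\,\rd t\to\tfrac14\int\chi_1\chi_2\,\rd t\int\psi_1\psi_2\,\rd x$. So $\mu$ exists, but it is not $\int\langle\mu_t,\cdot\rangle\,\rd t$ for any slice limits. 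Approximating $\chi_1\chi_2$ by step functions only reproduces the original problem on each interval.

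The missing idea is never to take pointwise-in-$t$ limits of quantities that are not null. The paper takes the space--time limit $B(\varphi_1,\varphi_2;a)$ of $\int\varphi_1u_n\,\mathcal{A}_a(\varphi_2v_n)$ directly, along a diagonal subsequence over a countable dense family; your uniform bound extends it to a bounded trilinear form. It then proves the localisation identity $B(\eta\varphi_1,\varphi_2;a)=B(\varphi_1,\eta\varphi_2;a)$, whose defect is $-\lim_n\int\varphi_1u_n\,[\mathcal{A}_a,M_\eta](\varphi_2v_n)$. For a temporal cutoff $\eta(t)$ the commutator vanishes identically. For a spatial cutoff $\eta(x)$ it is compact on $L^{p'}(\R^d)$ (Lemma~\ref{lem:building}(2)). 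Your fiberwise weak nullity, which you obtained correctly via Arzel\`a--Ascoli and which holds for every $t$ along the whole sequence, then makes the $x$-integral tend to $0$ for every $t$. Because that pointwise limit is $0$, it does not depend on the subsequence, so dominated convergence is legitimate there. Stone--Weierstrass covers general $\eta$, so $B$ depends only on $\varphi_1\varphi_2$, and the anisotropic kernel theorem then produces $\mu$.
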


\begin{proof}
Fix $J\Subset I$ and $K\subset\Omega$ compact; constants are uniform in $n$.

\emph{Slice representatives.} By (i) each $u_n\in L^\infty(J;L^p(K))$ with $\sup_n\norm{u_n}_{L^\infty(J;L^p(K))}<\infty$, and by (ii) the maps
$t\mapsto\langle u_n(t,\cdot),\theta\rangle$ are continuous for every $\theta$ in a countable $L^{p'}(K)$-dense family $\mathcal{D}$. We construct, for each $n$,
a representative of $u_n$ defined for every $t\in J$, weakly continuous as a map $J\to L^p(K)$, and satisfying $\norm{u_n(t,\cdot)}_{L^p(K)}\le C$
for all $t\in J$. Fix $t_0\in J$ and let $t_k\to t_0$ range over good points (a.e.\ $t$ is good, the essential slice bound of (i) holding there); the
sequence $u_n(t_k,\cdot)$ is bounded in the reflexive space $L^p(K)$, hence weakly precompact, and every weak subsequential limit $\ell$ satisfies
$\langle\ell,\theta\rangle=\lim_k\langle u_n(t_k,\cdot),\theta\rangle$ for each $\theta\in\mathcal{D}$, by the continuity of the slice maps (ii), a value
independent of the points $t_k$ and of the subsequence. Since $\mathcal{D}$ is $L^{p'}(K)$-dense and all these limits obey the uniform bound $\le C$, the
functionals $\theta\mapsto\langle\ell,\theta\rangle$ agree on $\mathcal{D}$ and extend continuously, so $\ell$ is the same for all admissible choices; the
whole bounded net $\{u_n(t,\cdot): t\to t_0,\ t\text{ good}\}$ thus converges weakly in $L^p(K)$ to a unique element, which we take as $u_n(t_0,\cdot)$. Weak
lower semicontinuity gives $\norm{u_n(t_0,\cdot)}_{L^p(K)}\le C$, and continuity of $t\mapsto\langle u_n(t,\cdot),\theta\rangle$ for $\theta\in\mathcal{D}$
together with the uniform bound yields weak continuity of $u_n(\cdot,\cdot):J\to L^p(K)$.
The same construction applies to $v_n$ in $L^{p'}(K)$. We work with these representatives throughout, so every slice statement below holds for every $t\in J$.

\emph{Step 1 (fiberwise weak nullity).} For $\theta\in C_c^\infty(\Omega)$ set $g_n(t):=\langle u_n(t,\cdot),\theta\rangle$.
By (i), $|g_n|\le C\norm{\theta}_{L^{p'}}$ on $J$; by (ii), $\{g_n\}$ is equicontinuous. Arzel\`a--Ascoli extracts a subsequence
converging uniformly on $J$; and $\int_J\rho\,g_n=\int u_n\,(\rho\otimes\theta)\to 0$ for every $\rho\in C_c(J)$ because $u_n\rightharpoonup 0$,
so every uniform limit point vanishes and hence $g_n(t)\to 0$ for every $t\in J$. Ranging $\theta$ over a countable $L^{p'}(K)$-dense set
and using the pointwise slice bound just established, $u_n(t,\cdot)\rightharpoonup 0$ in $L^p(K)$ for every $t\in J$;
likewise $v_n(t,\cdot)\rightharpoonup 0$ in $L^{p'}(K)$.

\emph{Step 2 (a bounded, symmetrically factorising form).} For $\varphi_1,\varphi_2\in C_c^\infty(I\times\Omega)$ and $a\in C^\infty(\Sp^{d-1})$
put $L_n:=\int\varphi_1 u_n\,\mathcal{A}_a(\varphi_2 v_n)$. By (i) and the $L^{p'}$-boundedness of $\mathcal{A}_a$ (H\"ormander--Mikhlin),
$|L_n|\le C\norm{a}_{C^\kappa}$ with $\kappa=\lfloor d/2\rfloor+1$, uniformly in $n$; a diagonal subsequence makes $L_n$ converge for all
$(\varphi_1,\varphi_2,a)$ in a countable dense family, and the uniform bound extends the limit to a trilinear form $B$ with
$|B|\le C\norm{\varphi_1}_{C_0}\norm{\varphi_2}_{C_0}\norm{a}_{C^\kappa}$. We claim the symmetric factorisation
\begin{equation*}
    B(\eta\varphi_1,\varphi_2;a)=B(\varphi_1,\eta\varphi_2;a),\qquad\eta\in C_c^\infty(I\times\Omega),
\end{equation*}
for every space--time cutoff $\eta$; its difference equals $-\lim_n\int\varphi_1 u_n\,[\mathcal{A}_a,M_\eta](\varphi_2 v_n)$, and we treat $\eta$
in three stages. \emph{(a) Purely spatial $\eta=\eta(x)$:} for each fixed $t$, $[\mathcal{A}_a,M_\eta]$ is compact on $L^{p'}(\R^d)$
(Lemma \ref{lem:building}(2), with $\eta\in C_c^\infty(\R^d)\subset C^\epsilon\cap C_0$ and $a\in C^\infty(\Sp^{d-1})$), so it carries the
fiberwise weakly-null $\varphi_2(t,\cdot)v_n(t,\cdot)$ to a strongly $L^{p'}$-null sequence; paired with $\varphi_1(t,\cdot)u_n(t,\cdot)$
(of compact $x$-support, bounded in $L^p$), the $x$-integral tends to $0$ for every $t$ and is dominated by
$C\operatorname*{ess\,sup}_t\norm{u_n(t)}_{L^p(K)}\operatorname*{ess\,sup}_t\norm{v_n(t)}_{L^{p'}(K)}\mathbf 1_J(t)\in L^1$, so dominated convergence
gives the identity. \emph{(b) Purely temporal $\eta=\eta(t)$:} here $[\mathcal{A}_a,M_{\eta(t)}]=0$ identically, since $\mathcal{A}_a$ acts in $x$
alone and multiplication by the $x$-independent factor $\eta(t)$ commutes with it, so the difference vanishes exactly. \emph{(c) General $\eta$:}
for a tensor product $\eta=\alpha(t)\beta(x)$ with $\alpha\in C_c^\infty(I)$ and $\beta\in C_c^\infty(\Omega)$, composing (a) with $\beta$ and (b)
with $\alpha$ gives $B(\alpha\beta\varphi_1,\varphi_2;a)=B(\alpha\varphi_1,\beta\varphi_2;a)=B(\varphi_1,\alpha\beta\varphi_2;a)$.
By the Stone--Weierstrass theorem, finite sums of such tensor products approximate any $\eta\in C_c^\infty(I\times\Omega)$ uniformly on the fixed
compact $\operatorname{supp}\varphi_1\cup\operatorname{supp}\varphi_2$; the approximants may be taken with $C_c^\infty$ factors: fix once and
for all a product cutoff $\zeta=\zeta_1(t)\zeta_2(x)$ with $\zeta_1\in C_c^\infty(I)$, $\zeta_2\in C_c^\infty(\Omega)$ and $\zeta\equiv 1$ on a
neighbourhood of $\operatorname{supp}\varphi_1\cup\operatorname{supp}\varphi_2$, and replace each tensor sum $\sum\alpha\beta$ by
$\zeta\sum\alpha\beta=\sum(\zeta_1\alpha)(\zeta_2\beta)$, still a finite tensor sum whose factors $\zeta_1\alpha\in C_c^\infty(I)$,
$\zeta_2\beta\in C_c^\infty(\Omega)$ satisfy the hypotheses of (a) and (b) and which agrees with the original sum on
$\operatorname{supp}\varphi_1\cup\operatorname{supp}\varphi_2$, so that the factorisation just established applies to each summand; since
$|B(\psi_1,\psi_2;a)|\le C\norm{\psi_1}_{C_0}\norm{\psi_2}_{C_0}\norm{a}_{C^\kappa}$ makes both $\eta\mapsto B(\eta\varphi_1,\varphi_2;a)$
and $\eta\mapsto B(\varphi_1,\eta\varphi_2;a)$ continuous for the supremum norm, the identity extends to every $\eta\in C_c^\infty(I\times\Omega)$.

\emph{Step 3 (the distribution).} The symmetric factorisation says precisely that $B(\varphi_1,\varphi_2;a)$ depends on $(\varphi_1,\varphi_2)$
only through the product $\varphi_1\varphi_2$: for $\psi\in C_c^\infty(I\times\Omega)$ set $\langle T_a,\psi\rangle:=B(\psi,\zeta;a)$ with any
$\zeta\in C_c^\infty(I\times\Omega)$ equal to $1$ on a neighbourhood of $\operatorname{supp}\psi$; this is independent of the choice of $\zeta$
(if $\zeta,\zeta'$ both equal $1$ near $\operatorname{supp}\psi$, pick $\zeta''\equiv1$ on $\operatorname{supp}\psi$ with
$\operatorname{supp}\zeta''\subset\{\zeta=\zeta'\}$; the shuffling identity gives
$B(\psi,\zeta-\zeta';a)=B(\zeta''\psi,\zeta-\zeta';a)=B(\psi,\zeta''(\zeta-\zeta');a)=0$), linear in $\psi$, and bounded by
$|\langle T_a,\psi\rangle|\le C\norm{\psi}_{C^0}\norm{a}_{C^\kappa}$. The resulting bilinear map $(\psi,a)\mapsto\langle T_a,\psi\rangle$ on
$C_c^\infty(I\times\Omega)\times C^\infty(\Sp^{d-1})$, continuous of order $0$ in $(t,x)$ and, on tensor products, order $\kappa$ on the sphere, is
exactly the input of the Schwartz-kernel argument for distributions of anisotropic order that establishes Theorem \ref{thm:h_dist_existence}
\cite{AntonicMitrovic2011, AntonicErcegMisur2021}; there the base $X = I\times\Omega$ may be any $C^\infty$ manifold (the multiplier $\mathcal{A}_a$
acting in $x$ alone enters only through the continuity of $a\mapsto T_a$, not through the kernel construction), so it produces a unique
$\mu\in\mathcal{D}'(I\times\Omega\times\Sp^{d-1})$ with $B(\varphi_1,\varphi_2;a)=\langle\mu,\varphi_1\varphi_2\,a\rangle$, of order $0$ in $(t,x)$. The passage
from tensor-product continuity to the full product raises the sphere order to $\le d(\kappa+2)$ \cite[Rem.~2]{AntonicErcegMisur2021}, which is immaterial
below, every symbol paired against $\mu$ being smooth.
Exhausting $\R\times\R^d$ by slabs $I\times\Omega$ and diagonalising extends $\mu$ to $\mathcal{D}'(\R\times\R^d\times\Sp^{d-1})$.
\end{proof}

\begin{remark}[Why the gradients are not bounded uniformly]\label{rem:novacuity}
In Theorems \ref{thm:bicharacteristic_first_order} and \ref{thm:bicharacteristic_pwave} the regularity $u_n\in W^{1,p}_{loc}$ (resp.\ $W_n\in W^{1,2}_{loc}$)
is imposed on each term individually (to license the chain rule for $\Phi_p$ and the pointwise identities below), but no uniform bound on the
gradients is assumed, and none may be: were $\{u_n\}$ bounded in $W^{1,p}_{loc}$, Rellich--Kondrachov would upgrade $u_n\rightharpoonup 0$ to $u_n\to 0$
strongly in $L^p_{loc}$, forcing $\mu\equiv 0$ and rendering the transport equation vacuous. The two hypotheses are perfectly compatible: for each fixed
$n$ the gradient $\nabla u_n$ is a bona fide $L^p_{loc}$ function of finite norm (as $W^{1,p}_{loc}$ demands), and it is only the growth of these norms
$\norm{\nabla u_n}_{L^p(K)}\to\infty$ as $n\to\infty$ that is left unconstrained. The oscillatory sequences carrying nontrivial microlocal
energy ($u_n(t,x)=U(n\Phi(t,x))$ with $U$ periodic) have $\norm{\nabla u_n}\sim n$ and are excluded by any uniform gradient bound. What the
construction consumes is only the uniform $L^\infty_{loc}$ bound (scalar case) or the uniform local-energy bound (wave case), hypothesis \textup{(i)} of
Lemma \ref{lem:evolution}, together with the equicontinuity \textup{(ii)} that the evolution equation supplies.
\end{remark}

\begin{theorem}[Bicharacteristic Flow in $L^p$]\label{thm:bicharacteristic_first_order}
Let $2 \le p < \infty$. Let $u_n \rightharpoonup 0$ in $L_{loc}^p(\R \times \R^d)$ be a sequence of real-valued solutions, each of class
$W^{1,p}_{loc} \cap L^\infty_{loc}(\R \times \R^d)$ and uniformly bounded in $L^\infty_{loc}$ (no uniform bound on the gradients being imposed;
cf.\ Remark \ref{rem:novacuity}), to the first-order scalar equation:
\begin{equation*}
    \partial_t u_n + \sum_{j=1}^d b_j(t,x) \partial_{x_j} u_n + c(t,x) u_n = f_n \to 0 \quad \text{in } L_{loc}^p(\R \times \R^d),
\end{equation*}
where $b_j \in C^{1,\epsilon}(\R \times \R^d)$ with $\nabla_x b_j(t,\cdot) \in C_0(\R^d; \R^d)$ uniformly in $t$ (sup-norm decay, uniformly H\"older;
so that the parametrised Second Commutation Lemma of Remark \ref{rem:parameter} applies), and $c \in C_b(\R \times \R^d)$ is bounded and continuous.
Let $\mu$ be the H-distribution generated by the canonical pair $(u_n, \Phi_p(u_n))$ in the sense of Lemmas \ref{lem:canonical} and \ref{lem:evolution}.
Let $\dot{x} = b(t,x)$ and $\dot{\xi}_k = -\sum_{j=1}^d \xi_j \partial_{x_k} b_j$ define the bicharacteristic flow on the cotangent bundle, and let
$\dot{\xi}^\top = \dot{\xi} - (\dot{\xi}\cdot\xi)\xi$ denote its tangential projection onto $T_\xi\Sp^{d-1}$. Then $\mu$ satisfies the phase-space
transport equation:
\begin{equation*}
    \partial_t \mu + b \cdot \nabla_x \mu + \operatorname{div}_{\Sp^{d-1}}(\dot{\xi}^\top\mu) + p c \mu = 0
\end{equation*}
in the sense of distributions on $\R \times \R^d \times \Sp^{d-1}$, where $\operatorname{div}_{\Sp^{d-1}}$ denotes the intrinsic divergence on the unit sphere.
\end{theorem}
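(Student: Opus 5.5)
Our plan is to derive the transport equation by the Tartar-type energy method: multiply the equation by a microlocal test operator applied to the Nemyckij dual and symmetrise. First we check that Lemma \ref{lem:evolution} applies to the pair $(u_n,\tilde v_n)$ with $\tilde v_n=\Phi_p(u_n)-\bar v$. The uniform $L^\infty_{loc}$ bound on $u_n$ gives hypothesis (i) directly, since $|\Phi_p(u_n)|\le |u_n|^{p-1}$. For (ii) we use the equation: for $\theta\in C_c^\infty$, $\frac{\rd}{\rd t}\langle u_n,\theta\rangle=\langle u_n,\operatorname{div}(b\theta)\rangle-\langle cu_n,\theta\rangle+\langle f_n,\theta\rangle$ is bounded in $L^p_{loc}(\rd t)$, which gives equicontinuity. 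For $\Phi_p(u_n)$ we use the chain rule. Because $p\ge2$, $\Phi_p$ is $C^1$ with $\Phi_p'(s)=(p-1)|s|^{p-2}$, and on $W^{1,p}_{loc}\cap L^\infty_{loc}$ functions it gives
\begin{equation*}
\partial_t\Phi_p(u_n)+b\cdot\nabla_x\Phi_p(u_n)=(p-1)|u_n|^{p-2}(f_n-cu_n).
\end{equation*}
The right-hand side is bounded in $L^1_{loc}$, indeed $\to -(p-1)c|u_n|^{p-2}u_n$ plus an $L^1$-null term, so the same argument yields (ii). This is exactly where $p\ge2$ is consumed.

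Next we derive the two energy identities. Fix $\varphi\in C_c^\infty(\R\times\R^d)$ and $a\in C^\infty(\Sp^{d-1})$, and write $P=\partial_t+b\cdot\nabla_x$. We expand $\int (Pu_n)\,\mathcal{A}_a(\varphi\tilde v_n)\varphi$ and $\int \varphi u_n\,\mathcal{A}_a(\varphi P\Phi_p(u_n))$, and integrate by parts to move $P$ onto the other factor. In the sum of the two, the terms $u_n\mathcal{A}_a(\varphi^2 P\Phi_p)$-type cancel up to commutators. Two commutators remain. The first is $[\mathcal{A}_a,M_{b_j}]$ hit by $\partial_{x_j}$, which we treat with Theorem \ref{thm:second_comm}(II) in the parametrised form of Remark \ref{rem:parameter}; by the localisation discussion of Section \ref{sec:comparison}, only the compactly supported Step~1 case is needed. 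The second is $[\mathcal{A}_a,M_\varphi]$ and $[\mathcal{A}_a,M_{\operatorname{div}b}]$, which are compact by Lemma \ref{lem:building}(2). Each compact remainder is paired with a weakly null sequence, fiberwise in $t$ as in Step 2(a) of Lemma \ref{lem:evolution}, and dominated convergence in $t$ shows these contributions vanish. The terms with $f_n$ vanish because $f_n\to0$ strongly in $L^p$ while the other factor is bounded in $L^{p'}$.

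Then we collect the limits. The principal parts give $-\langle\mu,\varphi^2\{\,\xi\!\cdot\! b,a\}\rangle$, namely $\xi_j\partial_{\xi_k}a\,\partial_{x_k}b_j$, and $\langle\mu,(\partial_t+b\cdot\nabla_x)(\varphi^2)a\rangle$, together with the $\operatorname{div}b$ terms. The zero-order terms give $-c$ from $Pu_n$ and $-(p-1)c$ from $P\Phi_p$. The latter must be rewritten: $(p-1)|u_n|^{p-2}c u_n=(p-1)c\Phi_p(u_n)$, whose pairing with $\varphi u_n$ through $\mathcal{A}_a$ is identified with $(p-1)\langle\mu,c\varphi^2a\rangle$ via Lemma \ref{lem:canonical}(3) and continuity of $c$ (approximating $c$ by smooth functions and commuting with $\mathcal{A}_a$ modulo compacts). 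Altogether this gives $pc\mu$. Finally, extending $a$ homogeneously of degree zero, Euler's relation $\xi\cdot\nabla_\xi a=0$ lets us write $\sum\xi_j\partial_{x_k}b_j\,\partial_{\xi_k}a=-\dot\xi^\top\cdot\nabla_{\Sp}a$. Dualising, this yields $\operatorname{div}_{\Sp^{d-1}}(\dot\xi^\top\mu)$, after absorbing the $\operatorname{div}_x b$ terms into $b\cdot\nabla_x\mu$ in divergence form. Since $\varphi^2a$ with $\varphi$ replaced by general products generates the test space by the factorisation of Lemma \ref{lem:evolution}, the distributional equation follows.

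The main obstacle is the nonlinear dual side, namely handling $(p-1)c|u_n|^{p-2}u_n$ and justifying that it contributes exactly $(p-1)c\mu$ rather than an unidentified weak limit. A second delicate point is the bookkeeping of the non-weakly-null shift $\bar v$, where $P\bar v$ must be shown to contribute nothing. For the shift we would first try showing that $\bar v$ satisfies the limiting transport equation, so that its contributions pair against $u_n\rightharpoonup0$ with fixed $L^{p'}$ factors and vanish.
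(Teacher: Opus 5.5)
Your proposal is correct and follows the paper's proof in all essentials: the chain rule for $\Phi_p$ at $p\ge 2$ gives the linear dual equation $\partial_t v_n + b\cdot\nabla_x v_n + (p-1)c\,v_n = f_n^*$; the two localized pairings are summed so that the time and transport terms telescope; the Second Commutation Lemma handles $[\mathcal{A}_a,M_{b_j}]\partial_{x_j}$, with compact remainders vanishing fiberwise; the zero-order coefficients combine as $c+(p-1)c=pc$; and Euler's relation produces $\operatorname{div}_{\Sp^{d-1}}(\dot\xi^\top\mu)$. The only deviation is bookkeeping. You place $\mathcal{A}_a$ on the dual slot, which spares you the paper's uniform reflection $a\mapsto\check a$, and you differentiate the shifted dual $\tilde v_n$; this is harmless, since $\bar v$ solves the limit equation and so $\tilde v_n$ obeys the same dual equation with the same $f_n^*$. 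The paper sidesteps the issue entirely by pairing with the unshifted $v_n$ and discarding $\bar v$ only at the identification step via Lemma~\ref{lem:canonical}(3).
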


\begin{remark}[The range $1 < p < 2$]\label{rem:chainrange}
For $2 \le p < \infty$ the Nemyckij map $\Phi_p$ is $C^1$ with $\Phi_p'(s) = (p-1)|s|^{p-2}$ bounded on bounded sets, so under the standing hypothesis
$u_n \in L^\infty_{loc} \cap W^{1,p}_{loc}$ one has $v_n = \Phi_p(u_n) \in W^{1,p'}_{loc}$ with $\nabla v_n = (p-1)|u_n|^{p-2}\nabla u_n$; here $p' \le 2 \le p$,
so $\nabla u_n \in L^{p'}_{loc}$ locally. For $1 < p < 2$ the derivative $\Phi_p'$ is singular at the origin: near a transversal zero of $u_n$ one has
$|u_n|^{p-2}\nabla u_n \in L^{p'}_{loc}$ only when $p > \tfrac{1+\sqrt{5}}{2}$, so the dual sequence $v_n$ need not lie in $W^{1,p'}_{loc}$ and the derivation
below does not extend to $1 < p < 2$ without further non-degeneracy hypotheses. We therefore restrict Theorem \ref{thm:bicharacteristic_first_order} to $p \ge 2$.
\end{remark}

\begin{proof}
Throughout, we take the symbol $a \in C^\infty(\Sp^{d-1})$ real-valued (the general case follows by treating real and imaginary parts separately);
and we pass without relabelling to the subsequence along which the H-distribution $\mu$ of the canonical pair $(u_n, v_n)$, $v_n := \Phi_p(u_n)$, exists.
Since $\Phi_p(u_n)$ need not be weakly null, we first extract (reflexivity of $L^{p'}_{loc}$) a subsequence with $v_n \rightharpoonup \bar v$ and pass to
the weakly-null shifted dual $\tilde v_n := v_n - \bar v \rightharpoonup 0$; by Lemma \ref{lem:canonical}(3) (whose evolution form is identical,
the difference $\int \varphi_1 u_n\,\mathcal{A}_a(\varphi_2 \bar v)\to 0$ since $\varphi_1 u_n \rightharpoonup 0$ pairs against the fixed $L^{p'}$ function
$\mathcal{A}_a(\varphi_2 \bar v)$) every pairing against $v_n$ equals that against $\tilde v_n$, so $\mu$ is unambiguously the canonical H-distribution and
may be produced by applying Lemma \ref{lem:evolution} to the weakly-null pair $(u_n, \tilde v_n)$. Its existence is thereby furnished by Lemma \ref{lem:evolution}:
hypothesis (i) holds because the uniform $L^\infty_{loc}$ bound gives
$\sup_n\operatorname*{ess\,sup}_{t\in J}(\norm{u_n(t,\cdot)}_{L^p(K)} + \norm{v_n(t,\cdot)}_{L^{p'}(K)}) < \infty$
on every slab $J\times K$ (recall $|v_n|^{p'} = |u_n|^p$; the shift by $\bar v \in L^\infty(J;L^{p'}(K))$ preserves it),
while hypothesis (ii) holds because, for $\theta \in C_c^\infty(\R^d)$ and $s,t \in J$,
\begin{equation*}
    \langle u_n(t,\cdot) - u_n(s,\cdot), \theta\rangle = \int_s^t \Big[ \big\langle u_n, \textstyle\sum_j \partial_{x_j}(b_j\theta) - c\theta\big\rangle + \langle f_n, \theta\rangle \Big]\,\rd\tau,
\end{equation*}
which by (i), the boundedness of $b_j, c$, and H\"older's inequality in $\tau$ is
$\le C(\theta)\big(|t-s| + \norm{f_n}_{L^p(J\times K)}|t-s|^{1/p'}\big) \le C'(\theta)|t-s|^{1/p'}$, uniformly in $n$ (the numbers $\norm{f_n}_{L^p(J\times K)}$
being bounded, as $f_n\to 0$); the dual equation controls the slices of $v_n$ identically. Finally, the shifted dual $\tilde v_n := v_n - \bar v$ inherits
both hypotheses: (i) is preserved since $\bar v \in L^\infty(J;L^{p'}(K))$ by weak lower semicontinuity, and (ii) holds because
$\langle\tilde v_n(t,\cdot) - \tilde v_n(s,\cdot),\theta\rangle = \langle v_n(t,\cdot) - v_n(s,\cdot),\theta\rangle - \langle\bar v(t,\cdot) - \bar v(s,\cdot),\theta\rangle$,
where $t\mapsto\langle\bar v(t,\cdot),\theta\rangle$ is the uniform-on-$J$ limit (Arzel\`a--Ascoli) of the equicontinuous family
$t\mapsto\langle v_n(t,\cdot),\theta\rangle$ and hence shares their modulus, so the difference is equicontinuous uniformly in $n$.

Since $p \ge 2$, Remark \ref{rem:chainrange} gives $v_n = \Phi_p(u_n) = |u_n|^{p-2}u_n \in W^{1,p'}_{loc}(\R \times \R^d)$
with $\partial_{x_j} v_n = (p-1)|u_n|^{p-2}\partial_{x_j} u_n$. Multiplying the primary equation by $(p-1)|u_n|^{p-2}$
therefore shows that the dual sequence $v_n$ satisfies the dual equation:
\begin{equation*}
    \partial_t v_n + \sum_{j=1}^d b_j(t,x) \partial_{x_j} v_n + (p-1)c(t,x) v_n = f_n^* \to 0 \quad \text{in } L_{loc}^{p'}(\R \times \R^d),
\end{equation*}
where $f_n^* = (p-1)|u_n|^{p-2}f_n$. Let $\varphi \in C_c^1(\R \times \R^d)$ be a smooth, real-valued cutoff function.
Multiplying the primary equation by $\varphi$ yields:
\begin{equation*}
    \partial_t (\varphi u_n) + \sum_{j=1}^d b_j \partial_{x_j} (\varphi u_n) + c \varphi u_n = \varphi f_n + (\partial_t \varphi)u_n + \sum_{j=1}^d b_j (\partial_{x_j} \varphi)u_n.
\end{equation*}

We apply the bounded Fourier multiplier $\mathcal{A}_a$ (with real symbol $a \in C^\infty(\Sp^{d-1})$) to this localized equation.
Since $\mathcal{A}_a$ commutes with $\partial_{x_j}$, one has
$[\mathcal{A}_a, M_{b_j}]\partial_{x_j} = \partial_{x_j}[\mathcal{A}_a, M_{b_j}] - [\mathcal{A}_a, M_{\partial_{x_j}b_j}]$,
and Theorem \ref{thm:second_comm}(II) (the Global $L^p$ Second Commutation Lemma) decomposes
$\partial_{x_j}[\mathcal{A}_a, M_{b_j}] = \operatorname{Op}(\xi_j\{a,b_j\}) + R_j$; therefore
\begin{equation*}
    \mathcal{A}_a(b_j \partial_{x_j} (\varphi u_n)) = b_j \partial_{x_j} \mathcal{A}_a(\varphi u_n) + \operatorname{Op}(\xi_j \{a, b_j\}) (\varphi u_n) + \mathcal{R}_{j,n},
\end{equation*}
where the remainder $\mathcal{R}_{j,n} := R_j(\varphi u_n) - [\mathcal{A}_a, M_{\partial_{x_j}b_j}](\varphi u_n)$ is the image of $\varphi u_n$ under a
compact operator: $R_j$ is compact on $L^p(\R^d)$ by Theorem \ref{thm:second_comm}(II), and $[\mathcal{A}_a, M_{\partial_{x_j}b_j}]$ is compact on
$L^p(\R^d)$ by Lemma \ref{lem:building}(2) (as $\partial_{x_j}b_j \in C^\epsilon\cap C_0$), both with operator norms uniform for $t$ in a bounded interval
(Remark \ref{rem:parameter}).

Substituting this into the localized equation, we pair the result with the localized dual sequence $\varphi v_n$ in the $L^p$-$L^{p'}$ duality pairing.
Simultaneously, we multiply the dual PDE by $\varphi$, apply no Fourier multiplier, and pair it against $\mathcal{A}_a(\varphi u_n)$. Summing these two
bilinear pairings forces the cross-terms to collapse via the Leibniz rule:
\begin{equation*}
    \int \partial_t \mathcal{A}_a(\varphi u_n) (\varphi v_n) + \int \mathcal{A}_a(\varphi u_n) \partial_t (\varphi v_n) = \int \partial_t \left[ \mathcal{A}_a(\varphi u_n) (\varphi v_n) \right] = 0,
\end{equation*}
where the integral vanishes by the compact support of $\varphi$. 

The spatial transport terms undergo a similar algebraic reduction:
\begin{equation*}
    \int \sum_{j=1}^d b_j \partial_{x_j} \mathcal{A}_a(\varphi u_n) (\varphi v_n) + \int \sum_{j=1}^d b_j \mathcal{A}_a(\varphi u_n) \partial_{x_j} (\varphi v_n) = -\int \sum_{j=1}^d (\partial_{x_j}b_j) \mathcal{A}_a(\varphi u_n) (\varphi v_n).
\end{equation*}

\emph{The remainder pairings vanish.} Substituting the commutation identity contributes, to the summed bilinear pairing,
the term $\sum_{j=1}^d\int \mathcal{R}_{j,n}\,(\varphi v_n)$. For each $t$ the operator producing $\mathcal{R}_{j,n}(\cdot\,;t)$
is compact on $L^p(\R^d)$ with $t$-uniform norm, while $(\varphi u_n)(t,\cdot) \rightharpoonup 0$ in $L^p(\R^d)$ by the fiberwise
weak nullity of Lemma \ref{lem:evolution} (Step 1 of its proof); hence $\mathcal{R}_{j,n}(t,\cdot) \to 0$ strongly in $L^p(\R^d)$ for every $t$,
and, paired against the $L^{p'}$-bounded slice $(\varphi v_n)(t,\cdot)$, the $x$-integral tends to $0$ for every $t$ and is dominated by
$C\operatorname*{ess\,sup}_t\norm{(\varphi u_n)(t)}_{L^p}\operatorname*{ess\,sup}_t\norm{(\varphi v_n)(t)}_{L^{p'}}\mathbf{1}_J(t) \in L^1(\rd t)$.
Dominated convergence gives $\sum_j\int\mathcal{R}_{j,n}(\varphi v_n) \to 0$, so these terms drop out of every limit below.

We now pass to the limit as $n \to \infty$. A word on symbol bookkeeping: the H-distribution pairing is bilinear, so moving $\mathcal{A}_a$
from one slot to the other is the transpose $\mathcal{A}_a^\top = \mathcal{A}_{\check a}$ with $\check a(\xi):=a(-\xi)$, not a Hilbert adjoint.
Every identification below therefore produces the reflected symbol $\check a$; but the same reflection occurs in all terms, and as $a$ ranges over
$C^\infty(\Sp^{d-1})$ so does $\check a$, so we relabel $\check a\mapsto a$ uniformly and display each term with the symbol $a$ (this uniform reflection
is the sole role played by the reality of $a$). Thus, by Lemma \ref{lem:canonical} (moving $\mathcal{A}_a$ onto the dual slot, the fixed weak limit of $v_n$
contributing nothing), $\lim \int \mathcal{A}_a(\varphi u_n)(\varphi v_n) = \langle \mu, a \varphi^2 \rangle$. The spatial reduction above converges exactly to
$-\langle \mu, a (\nabla \cdot b) \varphi^2 \rangle$. The zero-order multiplication terms sum algebraically, incorporating the fundamental $L^p$ scaling:
\begin{equation*}
    \int \mathcal{A}_a(c \varphi u_n) (\varphi v_n) + \int (p-1)c(\varphi v_n) \mathcal{A}_a(\varphi u_n) \xrightarrow{n \to \infty} \langle \mu, p a c \varphi^2 \rangle.
\end{equation*}
Here the continuity of $c$ is essential: by the transpose relation just described each term is the integral of $c$ against the $L^1$-bounded density
$\varphi u_n\, \mathcal{A}_a(\varphi v_n)$, whose weak-$*$ limit is a Radon measure and hence pairs with the bounded continuous coefficient $c$
(equivalently, $\langle\mu, ca\varphi^2\rangle$ is defined by uniformly approximating $c \in C_b$ by smooth symbols). The two contributions carry the
coefficients $c$ and $(p-1)c$, which sum to the $L^p$-scaled factor $pc$.

Finally, we treat the principal-symbol term. By the Second Commutation Lemma its amplitude factorises into $\xi$- and $x$-dependent pieces,
$S_j(b_j) = \sum_{l=1}^d M_{\partial_{x_l}b_j}\,\mathcal{A}_{b_{jl}}$, with the fixed smooth degree-zero multiplier
$\mathcal{A}_{b_{jl}} := \operatorname{Op}(\xi_j\partial_{\xi_l}a)$ (here $b_{jl}(\xi) := \xi_j\partial_{\xi_l}a(\xi) \in C^\infty(\Sp^{d-1})$)
and the merely continuous amplitude $\partial_{x_l}b_j \in C_0(\R^d)$; no second derivative of $b_j$ appears. Its contribution
to the pairing is $\sum_{l=1}^d\int (\partial_{x_l}b_j)\,(\varphi v_n)\,\mathcal{A}_{b_{jl}}(\varphi u_n)$. Set $g := \partial_{x_l}b_j$, bounded and
continuous (indeed $g \in C_0(\R^d)$). Transposing $\mathcal{A}_{b_{jl}}$ onto the dual slot turns each summand into
$\int (\varphi u_n)\,\mathcal{A}_{\check b_{jl}}(g\,\varphi v_n)$; its limit is identified exactly as for the second zero-order contribution above, by
uniformly approximating $g$ by smooth symbols. For $g_k \in C_c^\infty$ with $g_k \to g$ in $L^\infty$ on the compact $x$-support, the factor $g_k\varphi$
is a legitimate test function, so $\int (\varphi u_n)\,\mathcal{A}_{\check b_{jl}}(g_k\varphi v_n) \to \langle\mu, g_k\,\varphi^2\,\check b_{jl}\rangle$ by the
definition of $\mu$, while the error $\int(\varphi u_n)\mathcal{A}_{\check b_{jl}}((g-g_k)\varphi v_n)$ is $\le C\norm{g-g_k}_{L^\infty}\to 0$ uniformly in $n$
(by (i) and the $L^{p'}$-boundedness of $\mathcal{A}_{\check b_{jl}}$). Since the H-distribution $\mu$ has order $0$ in $(t,x)$ (Lemma \ref{lem:evolution}),
the double limit is $\langle\mu, g\,\varphi^2\,b_{jl}\rangle$ after the uniform reflection relabel $\check b_{jl}\mapsto b_{jl}$ of the preceding paragraph,
i.e.\ $\mu$ evaluated on the degree-zero symbol $b_{jl} = \xi_j\partial_{\xi_l}a$ carrying the continuous coefficient $\partial_{x_l}b_j$.
Summing over $l$ and $j$ and using $\sum_{l=1}^d \partial_{\xi_l}a\,\partial_{x_l}b_j = \{a, b_j\}$,
this converges to the Poisson bracket $\sum_{j=1}^d \langle \mu, \xi_j \{a, b_j\} \varphi^2 \rangle$. Evaluating the limits of the right-hand sides of both
paired equations yields $\langle \mu, a \partial_t(\varphi^2) \rangle + \sum_{j=1}^d \langle \mu, a b_j \partial_{x_j}(\varphi^2) \rangle$. 

Substituting a non-negative test function $\Psi = \varphi^2$ and equating the limits, we obtain:
\begin{equation*}
    \langle \mu, a \partial_t \Psi \rangle + \langle \mu, \sum_{j=1}^d a b_j \partial_{x_j} \Psi \rangle + \langle \mu, a(\nabla \cdot b)\Psi \rangle - \langle \mu, \sum_{j=1}^d \xi_j \{a, b_j\} \Psi \rangle - \langle \mu, p a c \Psi \rangle = 0.
\end{equation*}

We rewrite the Poisson bracket term using the bicharacteristic frequency flow $\dot{\xi}_k = -\sum_j \xi_j \partial_{x_k} b_j$, observing that
$\sum_{j=1}^d \xi_j \{a, b_j\} = -\dot{\xi} \cdot \nabla_\xi a$. Since $a$ is homogeneous of degree zero, Euler's identity gives
$\xi \cdot \nabla_\xi a = 0$, so $\nabla_\xi a = \nabla_{\Sp^{d-1}} a$ is tangent to the sphere and only the tangential projection $\dot{\xi}^\top$
of the flow contributes: $\dot{\xi} \cdot \nabla_\xi a = \dot{\xi}^\top \cdot \nabla_{\Sp^{d-1}} a$. Performing the integration by parts intrinsically
on the frequency sphere therefore shifts the derivative onto $\mu$ via the surface divergence:
\begin{equation*}
    -\langle \mu, (-\dot{\xi}^\top \cdot \nabla_{\Sp^{d-1}} a)\Psi \rangle = -\langle \operatorname{div}_{\Sp^{d-1}}(\dot{\xi}^\top \mu), a \Psi \rangle.
\end{equation*}

Shifting the spatial and temporal derivatives onto $\mu$ in the sense of distributions yields:
\begin{equation*}
    \langle -\partial_t \mu - b \cdot \nabla_x \mu - \operatorname{div}_{\Sp^{d-1}}(\dot{\xi}^\top \mu) - p c \mu, a\Psi \rangle = 0.
\end{equation*}

This has been established for $\Psi = \varphi^2$ with $\varphi \in C_c^1$; since any real $\Psi \in C_c^\infty(\R\times\R^d)$ admits the polarization
$\Psi = \tfrac14\big[(\chi+\Psi)^2 - (\chi-\Psi)^2\big]$ with $\chi \in C_c^\infty$ equal to $1$ on $\operatorname{supp}\Psi$ (both $\chi\pm\Psi \in C_c^\infty$),
and the left-hand side is linear in $\Psi$, the identity extends to all real $\Psi$ (and to complex $\Psi$ by real and imaginary parts), so it holds in
$\mathcal{D}'(\R\times\R^d\times\Sp^{d-1})$. Expanding the surface divergence gives
$\operatorname{div}_{\Sp^{d-1}}(\dot{\xi}^\top\mu) = \dot{\xi}^\top \cdot \nabla_{\Sp^{d-1}}\mu + (\operatorname{div}_{\Sp^{d-1}}\dot{\xi}^\top)\mu$.
Consequently, along the projected bicharacteristic rays on the sphere the H-distribution propagates according to the material derivative
\begin{equation*}
    \frac{D\mu}{Dt} := \partial_t\mu + b\cdot\nabla_x\mu + \dot{\xi}^\top\cdot\nabla_{\Sp^{d-1}}\mu = -\big(\operatorname{div}_{\Sp^{d-1}}\dot{\xi}^\top + pc\big)\mu.
\end{equation*}
On the full cotangent bundle the underlying Hamiltonian flow generated by $H = b\cdot\xi$ is incompressible
(Liouville's theorem, $\nabla_x\cdot\dot{x} + \nabla_\xi\cdot\dot{\xi} = 0$, whence $\nabla_\xi\cdot\dot{\xi} = -\nabla_x\cdot b$);
the projective reduction to the cosphere bundle replaces this ambient frequency divergence by the intrinsic surface divergence
$\operatorname{div}_{\Sp^{d-1}}\dot{\xi}^\top$, which additionally carries the geometric curvature of the frequency projection.
\end{proof}

\subsection{Physical Interpretation: Hamiltonian Flow and the Vlasov Equation}

The transport equation derived in Theorem \ref{thm:bicharacteristic_first_order} shows that the propagation of the H-distribution $\mu$
is governed by the geometry of classical mechanics. To see this, we define the principal symbol of the spatial transport operator as a phase-space Hamiltonian:
\begin{equation*}
    H(t, x, \xi) = \sum_{j=1}^d b_j(t,x) \xi_j = b(t,x) \cdot \xi.
\end{equation*}

The bicharacteristic rays along which the high-frequency oscillations propagate are precisely the solutions to Hamilton's equations for this system:
\begin{align*}
    \dot{x}_k &= \frac{\partial H}{\partial \xi_k} = b_k(t,x), \\
    \dot{\xi}_k &= -\frac{\partial H}{\partial x_k} = -\sum_{j=1}^d \xi_j \frac{\partial b_j}{\partial x_k}(t,x).
\end{align*}

In classical statistical mechanics, Liouville's theorem states that the phase-space volume of a system evolving under Hamiltonian dynamics is conserved.
This is equivalent to stating that the phase-space velocity field $V = (\dot{x}, \dot{\xi})$ is divergence-free. Computing the phase-space divergence of
our system yields an exact cancellation:
\begin{equation*}
    \nabla_{x,\xi} \cdot V = \sum_{k=1}^d \frac{\partial \dot{x}_k}{\partial x_k} + \sum_{k=1}^d \frac{\partial \dot{\xi}_k}{\partial \xi_k} = \sum_{k=1}^d \frac{\partial b_k}{\partial x_k} - \sum_{k=1}^d \frac{\partial b_k}{\partial x_k} = 0.
\end{equation*}

Because the flow is incompressible ($\nabla_x \cdot \dot{x} + \nabla_\xi \cdot \dot{\xi} = 0$), the transport equation of Theorem
\ref{thm:bicharacteristic_first_order} corresponds to the standard kinetic (Vlasov) form
\begin{equation*}
    \partial_t \mu + \{H, \mu\}_{x,\xi} = (\nabla_x \cdot b - p c(t,x)) \mu,
\end{equation*}
understood formally, as a bracket identity on the algebra of degree-zero symbols rather than a distributional PDE on the punctured cotangent bundle: the
bracket $\{H,\mu\}_{x,\xi}$ and the divergence $\nabla_\xi\cdot\dot\xi$ presuppose a radial extension of $\mu$ off $\Sp^{d-1}$ that we do not construct.

This Vlasov form is the homogeneous (degree-zero) lift of the transport equation of Theorem \ref{thm:bicharacteristic_first_order}
to the full cotangent bundle $\R^d_x \times (\R^d_\xi \setminus \{0\})$; the microlocal object $\mu$ is its projectivisation onto the cosphere
bundle $\R^d_x \times \Sp^{d-1}$, where the ambient frequency divergence $\nabla_\xi \cdot \dot{\xi}$ is replaced by the intrinsic surface divergence
$\operatorname{div}_{\Sp^{d-1}}\dot{\xi}^\top$ of the projected flow. The two coincide, however, only as bracket actions tested against degree-zero
symbols, and not pointwise on the cosphere bundle: for the degree-one frequency field $\dot\xi$ the intrinsic surface divergence
$\operatorname{div}_{\Sp^{d-1}}\dot{\xi}^\top$ and the ambient divergence $\nabla_\xi\cdot\dot\xi = -\nabla_x\cdot b$ differ pointwise by the radial and
mean-curvature contributions discarded in the projection $\dot\xi\mapsto\dot\xi^\top$. The displayed Vlasov equation is therefore to be read as the
degree-zero homogeneous lift (exact once integrated against a homogeneous symbol $a(\xi)$) rather than as an identity of the two damping
coefficients at a fixed point of $\R^d_x\times\Sp^{d-1}$; it is in this tested sense that the Poisson-bracket form above faithfully encodes the
propagation. This formulation is mathematically recognized as a damped Vlasov equation.
In plasma physics and galactic dynamics, the Vlasov equation describes the time evolution of a collisionless particle distribution function in phase space.
Here, the ``particles'' are the microlocal concentrations of the $L^p$ sequence $(u_n)$. 

The left-hand side, $\partial_t \mu + \{H, \mu\}_{x,\xi} = \frac{D\mu}{Dt}$, represents the material derivative, the rate of change observed by riding along
the bicharacteristic rays. The right-hand side represents the continuous attenuation (or amplification) of the distribution, modified by the geometric
expansion of the spatial flow $\nabla_x \cdot b$. This establishes a parallel with the macroscopic PDE. If the macroscopic sequence $u_n$ experiences
exponential damping driven by $e^{-\int c(t,x) dt}$, the H-distribution $\mu$ attenuates at the $p$-th power of that rate, offset by the spatial divergence
of the transport rays.

\section{Application II: $L^p$ Transport Equations for the Quasilinear $p$-Wave System}\label{sec:wave_eq}

In the classical $L^2$ framework, the transport of H-measures for the wave equation (Tartar \cite[Theorem 2.2]{Tartar2017}) relies on a local energy identity
(Tartar's Lemma 2.1). This identity is derived by multiplying the linear wave equation $\partial_t(\rho \partial_t u) - \nabla \cdot (C \nabla u) = 0$ by the
time derivative $\partial_t u$, relying on the algebraic symmetry $\nabla u \cdot \partial_t \nabla u = \frac{1}{2}\partial_t |\nabla u|^2$. 

In the $L^p$ setting ($p \neq 2$), pairing the linear wave equation with the canonical dual $\Phi_p(\partial_t u) = |\partial_t u|^{p-2}\partial_t u$ fails to
produce a closed energy identity. The nonlinear weight $|\partial_t u|^{p-2}$ couples irreversibly with the spatial gradients, reflecting the analytic fact
that the linear wave group does not preserve $L^p$ norms. 

To track the microlocal concentration of waves in Banach spaces, the underlying geometry of the partial differential equation must be adjusted to the
$p$-topology. We formulate the $L^p$ analogue of the wave equation as a coupled, quasilinear $p$-hyperbolic system, establish its pointwise energy identity,
and lift the latter to the microlocal level. The lift closes into a bona fide spatial transport law in the linear core ($p=2$), where the energy is carried by a
microlocal energy-flux (Poynting) vector; for $p \neq 2$ the closure fails for a structural reason that we isolate precisely, and which we record as an open
problem.

\subsection{The Local $L^p$ Energy Identity}

Let $u_n$ be a sequence of solutions. We define the velocity sequence $U_n = \partial_t u_n$ and the spatial gradient sequence $X_n = \nabla u_n$. By the
symmetry of mixed derivatives, we have the linear compatibility constraint $\partial_t X_n - \nabla U_n = 0$. 

To satisfy the generalized chain rule in $L^p$, we restrict the medium to be isotropic. We define the corresponding Isotropic $p$-Wave System on
$\R \times \R^d$ as the coupled pair of equations:
\begin{align}
    \partial_t X_n - \nabla U_n &= 0, \label{eq:pwave1} \\
    \partial_t \big(\rho(x) \Phi_p(U_n)\big) - \nabla \cdot \big(c(x) \Phi_p(X_n)\big) &= 0, \label{eq:pwave2}
\end{align}
where $\rho \in C^{1,\epsilon}(\R^d)$ is a uniformly positive scalar density ($\inf_x \rho > 0$) and $c \in C^{1,\epsilon}(\R^d)$ is a uniformly positive
scalar acoustic wave speed ($\inf_x c > 0$), both with gradients vanishing at spatial infinity in supremum norm
(namely $\nabla \rho, \nabla c \in C_0(\R^d; \R^d)$, uniformly H\"older, as required by Assumption \ref{ass:spatial_reg} for the Second Commutation Lemma),
and $\Phi_p(Z) = |Z|^{p-2}Z$ is the canonical vector Nemyckij operator \cite{AntonicErcegMisur2021} mapping $L^p \to L^{p'}$.

The well-posedness of this degenerate quasilinear system for $p \neq 2$ lies outside our scope: Lemma \ref{lemma:energy} and its microlocal lift are stated
conditionally on a given sequence of solutions, and the microlocal transport theorem below concerns only the linear core $p = 2$, where the system is
the standard first-order symmetric-hyperbolic form of the wave equation.

\begin{lemma}[$L^p$ Energy Identity]\label{lemma:energy}
For $p \ge 2$, any smooth solution to the isotropic $p$-Wave system \eqref{eq:pwave1}-\eqref{eq:pwave2} satisfies the exact local energy identity:
\begin{equation*}
    \partial_t \left( \frac{p-1}{p} \rho(x) |U_n|^p + \frac{1}{p} c(x) |X_n|^p \right) - \nabla \cdot \left( U_n c(x) \Phi_p(X_n) \right) = 0.
\end{equation*}
For $1 < p < 2$ the derivative $\Phi_p'$ is singular at the origin, so the direct differentiation of the dual fields $\Phi_p(U_n), \Phi_p(X_n)$
performed below is delicate near the nodal sets $\{U_n = 0\}, \{X_n = 0\}$ (cf.\ Remark \ref{rem:chainrange}). The stated
identity, however, involves only $|U_n|^p$ and $|X_n|^p$, which are $C^1$ for every $p > 1$, and that its temporal term may alternatively be obtained through the dual
convex potential $\tfrac1{p'}|\Phi_p(U_n)|^{p'} = \tfrac1{p'}|U_n|^p$ without differentiating $\Phi_p$; for a smooth solution with transversal zeros every term
is $L^1_{loc}$ when $p > 1$ (as $p - 2 > -1$), so the identity plausibly persists into $1 < p < 2$ under a suitable nodal-set hypothesis. We nonetheless state
it only for $p \ge 2$, where the pointwise chain rule for $\Phi_p$ is unconditionally available, and defer the careful low-regularity treatment to Section
\ref{sec:future}. The microlocal result below is in any case confined to the linear core $p = 2$.
\end{lemma}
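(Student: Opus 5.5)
The identity is a pointwise computation, so I would differentiate the energy density directly and substitute the two equations of the system. Write $E_n := \frac{p-1}{p}\rho|U_n|^p + \frac1p c|X_n|^p$. The scalar and vector chain rules for $s\mapsto |s|^p$ (which is $C^1$ with gradient $p\Phi_p$ for every $p>1$) give
\begin{equation*}
\partial_t E_n = (p-1)\rho\,|U_n|^{p-2}U_n\,\partial_t U_n + c\,\Phi_p(X_n)\cdot\partial_t X_n .
\end{equation*}

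\emph{First term.} For $p\ge 2$ the map $\Phi_p$ is $C^1$ with $\Phi_p'(s)=(p-1)|s|^{p-2}$, bounded on bounded sets. Since $\rho$ is independent of $t$, equation \eqref{eq:pwave2} yields
\begin{equation*}
\rho\,\partial_t\Phi_p(U_n) = \rho(p-1)|U_n|^{p-2}\partial_t U_n = \nabla\cdot(c\Phi_p(X_n)).
\end{equation*}
Moreover $|U_n|^{p-2}U_n\,\partial_t U_n = U_n\,|U_n|^{p-2}\partial_t U_n$, so the first term equals $U_n\,\nabla\cdot(c\Phi_p(X_n))$. This is the one place where $p\ge2$ is used: we need $\partial_t\Phi_p(U_n)$ to exist pointwise and factor through the chain rule, and for $1<p<2$ this is the nodal-set subtlety noted in the statement.

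\emph{Second term.} By \eqref{eq:pwave1}, $\partial_t X_n = \nabla U_n$, so the second term is $c\,\Phi_p(X_n)\cdot\nabla U_n$.

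\emph{Conclusion.} Summing and applying the Leibniz rule for the divergence,
\begin{equation*}
U_n\,\nabla\cdot(c\Phi_p(X_n)) + c\Phi_p(X_n)\cdot\nabla U_n = \nabla\cdot\big(U_n c\,\Phi_p(X_n)\big),
\end{equation*}
which gives $\partial_t E_n - \nabla\cdot(U_n c\Phi_p(X_n))=0$.

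The only step needing care is the regularity bookkeeping: that $c\Phi_p(X_n)$ is $C^1$ in $x$ (true for smooth solutions, since $\Phi_p\in C^1$ when $p\ge2$ and $c\in C^{1,\epsilon}$), and that the equation \eqref{eq:pwave2} holds classically, so that every manipulation above is pointwise. No use of $\nabla\rho,\nabla c\in C_0$ is needed here; those hypotheses serve only the later microlocal lift.
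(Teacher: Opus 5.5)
Your proof is correct and is essentially the paper's computation run in reverse. The paper multiplies \eqref{eq:pwave2} by $U_n$, applies the chain rule $\rho U_n\partial_t\Phi_p(U_n)=\tfrac{p-1}{p}\partial_t(\rho|U_n|^p)$ and the Leibniz rule for the divergence, and substitutes $\nabla U_n=\partial_t X_n$ to get $\tfrac1p\partial_t(c|X_n|^p)$; you use exactly these ingredients, starting instead from $\partial_t E_n$. One small correction: the chain rule for $\partial_t\Phi_p(U_n)$ is not the only place $p\ge 2$ enters, because the classical divergence $\nabla\cdot(c\Phi_p(X_n))$ and its Leibniz expansion also need $\Phi_p\in C^1$, as your closing paragraph itself acknowledges.
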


\begin{proof}
We pair equation \eqref{eq:pwave2} with the velocity sequence $U_n \in L_{loc}^p$. For the temporal term, we apply the chain rule to the Nemyckij operator:
\begin{align*}
    U_n \cdot \partial_t \big(\rho \Phi_p(U_n)\big) &= \rho U_n \cdot \partial_t(|U_n|^{p-2}U_n) \\
    &= \rho U_n \cdot (p-1)|U_n|^{p-2}\partial_t U_n \\
    &= \frac{p-1}{p} \partial_t \big( \rho |U_n|^p \big).
\end{align*}

For the spatial term, we use the product rule:
\begin{equation*}
    - U_n \cdot \left( \nabla \cdot (c \Phi_p(X_n)) \right) = - \nabla \cdot (U_n c \Phi_p(X_n)) + \nabla U_n \cdot (c \Phi_p(X_n)).
\end{equation*}

We substitute the compatibility condition $\nabla U_n = \partial_t X_n$ from equation \eqref{eq:pwave1} into the remainder:
\begin{equation*}
    \partial_t X_n \cdot (c(x) \Phi_p(X_n)) = c(x) \left( \partial_t X_n \cdot |X_n|^{p-2}X_n \right) = \frac{1}{p} \partial_t \big( c(x) |X_n|^p \big).
\end{equation*}

Because $c(x)$ is a scalar, it commutes with the dot product. Summing the temporal and spatial terms yields the stated derivative identity.
Only $\Phi_p \in C^1$ enters here (through the chain rule for $\partial_t\Phi_p(U_n)$ and $\partial_t\Phi_p(X_n)$), so no second-derivative
regularity of $\Phi_p$ is required, and the identity holds for every $p \ge 2$ (in particular for $2 \le p < 3$).
\end{proof}

\subsection{Microlocal Energy Transport: the Linear Core and the Nonlinear Obstruction}

The local energy identity of Lemma \ref{lemma:energy} is the pointwise conservation law $\partial_t e_n - \nabla\cdot\mathbf{F}_n = 0$, with energy density
$e_n = \frac{p-1}{p}\rho|U_n|^p + \frac{1}{p}c|X_n|^p$ and Poynting flux $\mathbf{F}_n = U_n\, c\, \Phi_p(X_n)$. We lift this identity to the microlocal level.
Let $\boldsymbol{\mu}$ be the block H-distribution generated by the canonical pair $(W_n, Z_n)$, with $W_n = (U_n, X_n)$ and $Z_n = (\Phi_p(U_n), \Phi_p(X_n))$,
in the sense of Lemma \ref{lem:canonical} applied componentwise; write $\mu_U$, $\mu_X$, $\mu_{U,X_k}$, $\mu_{X_k,U}$ for the H-distributions of the pairs
$(U_n, \Phi_p(U_n))$, $(X_{k,n}, \Phi_p(X_n)_l)$, $(U_n, \Phi_p(X_n)_k)$, $(X_{k,n}, \Phi_p(U_n))$. Set
\begin{equation*}
    \nu := \frac{p-1}{p}\rho\,\mu_U + \frac{1}{p}c\,\operatorname{Tr}(\mu_X), \qquad \boldsymbol{\pi} := (\pi_1, \dots, \pi_d), \quad \pi_k := \tfrac12\big(\mu_{U,X_k} + \mu_{X_k,U}\big),
\end{equation*}
for the microlocal energy density and the (symmetrised) microlocal Poynting (momentum) vector; the diagonal of $\nu$ is the microlocal lift of the energy,
$\langle\nu, \varphi^2\rangle = \lim_n\int e_n\varphi^2$. The symmetrisation of $\boldsymbol{\pi}$ is forced by the bilinear nature of the H-distribution
pairing: transferring $\mathcal{A}_a$ from one slot to the other is the transpose $\mathcal{A}_a^\top = \mathcal{A}_{\check a}$ with $\check a(\xi) = a(-\xi)$,
so $\mu_{X_k,U}[a] = \mu_{U,X_k}[\check a]$ and only the even-in-$\xi$ part of the flux is intrinsic (see the proof of Theorem \ref{thm:bicharacteristic_pwave});
as $\nu$ is a diagonal object, hence even in $\xi$, this symmetrisation discards no information.

For the linear wave equation ($p = 2$) this lift closes and yields a genuine spatial transport law:
contrary to what a refraction-only reading would suggest, the microlocal energy is carried through space by the Poynting flux and is not stationary in time.

\begin{theorem}[Microlocal energy transport, linear wave case; recovery of the classical $L^2$ law]\label{thm:bicharacteristic_pwave}
Let $p = 2$, and let $W_n = (U_n, X_n) \rightharpoonup 0$ in $L^2_{loc}(\R\times\R^d; \R^{1+d})$ be solutions of the isotropic wave system
\eqref{eq:pwave1}--\eqref{eq:pwave2} with constant coefficients $\rho, c > 0$, normalised to $\rho = c = 1$, each of class $W^{1,2}_{loc}(\R\times\R^d)$
(no uniform bound on the gradients being imposed; cf.\ Remark \ref{rem:novacuity}). Let $\boldsymbol{\mu}$ be the matrix H-measure of $W_n$, furnished
componentwise by Lemma \ref{lem:evolution}, and set $\nu = \frac{1}{2}\mu_U + \frac{1}{2}\operatorname{Tr}(\mu_X)$ and
$\pi_k = \tfrac12(\mu_{U,X_k}+\mu_{X_k,U})$. Then the microlocal energy is transported in space by the (symmetrised) Poynting flux:
\begin{equation*}
    \partial_t \nu - \operatorname{div}_x \boldsymbol{\pi} = 0
\end{equation*}
in the sense of distributions on $\R\times\R^d\times\Sp^{d-1}$.
\end{theorem}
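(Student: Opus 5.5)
The plan is to lift the pointwise energy identity of Lemma \ref{lemma:energy} (with $p=2$, $\rho=c=1$, so $\Phi_2=\mathrm{id}$ and the system reads $\partial_t X_n=\nabla U_n$, $\partial_t U_n=\nabla\cdot X_n$) to the microlocal level by testing against $\mathcal{A}_a$. Fix a real symbol $a\in C^\infty(\Sp^{d-1})$ and a real cutoff $\varphi\in C_c^\infty(\R\times\R^d)$. With constant coefficients $\mathcal{A}_a$ commutes with $\partial_t$, $\partial_{x_k}$ and the system, so no commutation lemma is needed. First I would check the hypotheses of Lemma \ref{lem:evolution}. Hypothesis (i) follows from the local energy identity, which controls $\int_K(U_n^2+|X_n|^2)(t)$ on a slightly larger cone, uniformly in $t\in J$. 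Hypothesis (ii) follows from the equations, which give $\partial_t\langle W_n(t),\theta\rangle=\langle W_n,\text{(derivatives of }\theta)\rangle$, bounded uniformly. This produces $\boldsymbol{\mu}$ with the diagonal/tensor calculus of Lemma \ref{lem:canonical}.

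Next I would form the quadratic quantity
\begin{equation*}
E_n:=\tfrac12\int\big[\mathcal{A}_a(\varphi U_n)\,\varphi U_n+\textstyle\sum_k\mathcal{A}_a(\varphi X_{k,n})\,\varphi X_{k,n}\big]\psi(t)\,\rd x\,\rd t
\end{equation*}
with an extra time weight, or simply put the time derivative on the test function. The aim is to compute $\int\partial_t(\ldots)=0$ using the equations. Writing $\partial_t(\varphi U_n)=\varphi\nabla\!\cdot X_n+(\partial_t\varphi)U_n$ and $\partial_t(\varphi X_{k,n})=\varphi\partial_k U_n+(\partial_t\varphi)X_{k,n}$, the terms with derivatives of $W_n$ are rewritten as $\partial_k(\varphi X_{k,n})-(\partial_k\varphi)X_{k,n}$ and similarly for $U$. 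The genuinely first-order terms are
\begin{equation*}
\int\mathcal{A}_a(\partial_k(\varphi X_k))\varphi U+\int\mathcal{A}_a(\varphi U)\partial_k(\varphi X_k)+\int\mathcal{A}_a(\partial_k(\varphi U))\varphi X_k+\int\mathcal{A}_a(\varphi X_k)\partial_k(\varphi U).
\end{equation*}
Using the transpose $\mathcal{A}_a^\top=\mathcal{A}_{\check a}$ and integration by parts in $x$, these cancel exactly when $a$ is even, and for general $a$ they reduce to the odd part. This is where the symmetrisation $\pi_k$ enters: after the uniform reflection bookkeeping of the proof of Theorem \ref{thm:bicharacteristic_first_order}, only $a+\check a$ survives.

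The remaining zero-order terms carry derivatives of $\varphi$. The $\partial_t\varphi$ terms give $\langle\nu,a\,\partial_t(\varphi^2)\rangle$. The $\partial_k\varphi$ terms are the cross pairings $\int(\partial_k\varphi)X_k\,\mathcal{A}_a(\varphi U)+\int(\partial_k\varphi)U\,\mathcal{A}_a(\varphi X_k)$, which converge to $\langle\mu_{X_k,U}+\mu_{U,X_k},a\,\varphi\partial_k\varphi\rangle=\langle\pi_k,a\,\partial_k(\varphi^2)\rangle$. Any mismatch such as $\mathcal{A}_a(\partial_k\varphi\,U)$ versus $\partial_k\varphi\,\mathcal{A}_a U$ is a first commutator with a $C_c^\infty$ coefficient. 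That commutator is compact by Lemma \ref{lem:building}(2), and it is killed fiberwise by Step 1 of Lemma \ref{lem:evolution} together with dominated convergence, exactly as the remainder pairings in Theorem \ref{thm:bicharacteristic_first_order}. Equating the limits gives $\langle\nu,a\,\partial_t\Psi\rangle-\langle\boldsymbol{\pi},a\,\nabla_x\Psi\rangle=0$ for $\Psi=\varphi^2$ (sign to be tracked). The polarization trick from Theorem \ref{thm:bicharacteristic_first_order} extends this to all $\Psi$, yielding $\partial_t\nu-\operatorname{div}_x\boldsymbol{\pi}=0$.

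The main obstacle is the exact cancellation of the principal first-order cross terms, which involve unbounded quantities $\partial_k W_n$ and must cancel before any limit is taken. I would handle this at fixed $n$ using $W_n\in W^{1,2}_{loc}$, the skew-adjointness of $\partial_k$, and the fact that $\mathcal{A}_a$ commutes with $\partial_k$, carefully separating $a$ into even and odd parts. The odd part may leave a residual term of the form $\int\partial_k[\ldots]$ that vanishes by compact support, or else it must be shown to be absorbed into the definition of $\pi_k$. If the odd part does not cancel cleanly, I would restrict first to even $a$ and argue separately that $\nu$ and $\operatorname{div}\boldsymbol{\pi}$ pair trivially with odd symbols.
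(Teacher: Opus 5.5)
Your proposal is correct, but it organises the computation differently from the paper.

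\textbf{The paper's route.} The paper keeps the test function outside the pairing. It writes $\langle\partial_t\nu,a\Psi\rangle=\tfrac12\lim_n\int\Psi\,\partial_t[U_n\mathcal{A}_aU_n+\sum_kX_{k,n}\mathcal{A}_aX_{k,n}]$ and observes that the integrand telescopes pointwise into two exact divergences, $\operatorname{div}_x(\mathcal{A}_aU_n\,X_n)+\operatorname{div}_x(U_n\,\mathcal{A}_aX_n)$. A single integration by parts then finishes. The symmetrised flux appears because one divergence carries $\mathcal{A}_a$ on the $U$-slot and the other on the $X$-slot. The price is a separate localisation step: a cutoff $\chi$ is placed inside $\mathcal{A}_a$, and the resulting separated-support discrepancies are argued away.

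\textbf{Your route.} You localise inside both slots, Tartar-style, as in Theorem \ref{thm:bicharacteristic_first_order}, and use $\int\partial_t[\cdots]=0$. Every surviving term is then literally of the form $\int\varphi_1u\,\mathcal{A}_a(\varphi_2v)$ and converges directly by the definition of $\boldsymbol{\mu}$. You therefore need neither the $\chi$-bookkeeping nor any commutator compactness. Your appeal to Lemma \ref{lem:building}(2) for ``mismatches'' is superfluous, because no such mismatch arises.

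\textbf{A bookkeeping error, harmless in effect.} The four first-order terms cancel identically for \emph{every} $a$, not only for even $a$. Set $f=\varphi U_n$ and $g=\varphi X_{k,n}$. Pair $\int\mathcal{A}_a(\partial_kg)\,f$ with $\int\mathcal{A}_a(g)\,\partial_kf$, and $\int\mathcal{A}_a(f)\,\partial_kg$ with $\int\mathcal{A}_a(\partial_kf)\,g$. Since $\mathcal{A}_a\partial_k=\partial_k\mathcal{A}_a$ and $f,g$ have compact support, each pair sums to zero after one integration by parts. No transpose and no even/odd split is involved. So this step is not the ``main obstacle'' you anticipate.

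\textbf{Where the symmetrisation actually enters.} It does not enter at the first-order terms. It enters only through the zero-order $\partial_k\varphi$ terms, and there are four of them, not two. The two you omit are $\int\varphi U_n\,\mathcal{A}_a((\partial_k\varphi)X_{k,n})$ and $\int\varphi X_{k,n}\,\mathcal{A}_a((\partial_k\varphi)U_n)$. They contribute the same amount as the two you list, and the factor $\tfrac12$ in $E_n$ compensates. Together the four terms produce $\mu_{U,X_k}+\mu_{X_k,U}=2\pi_k$ automatically.

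Your fallback would also work: prove the identity for even $a$, then note that $\nu$ and $\pi_k$ annihilate odd symbols by the transpose relation $\mathcal{A}_a^\top=\mathcal{A}_{\check a}$. It is simply not needed.
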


\begin{proof}
Fix a real symbol $a\in C^\infty(\Sp^{d-1})$ and a test function $\Psi = \varphi^2$ with $\varphi\in C_c^1(\R\times\R^d)$;
pass without relabelling to the subsequence along which the H-measure $\boldsymbol{\mu}$ of Lemma \ref{lem:evolution} exists.
(In the bilinear H-measure pairing the relevant adjoint of $\mathcal{A}_a$ is the transpose
$\mathcal{A}_a^\top = \mathcal{A}_{\check a}$, $\check a(\xi)=a(-\xi)$, not a Hilbert adjoint; the diagonal objects $\mu_U, \mu_X$ are
even in $\xi$ and hence insensitive to the distinction, while the cross terms are handled by the symmetrisation in $\boldsymbol{\pi}$ below.)
Its hypotheses hold for the wave system: (i) at $p = 2$, $\rho = c = 1$ the energy $e_n = \tfrac12|W_n|^2$ obeys the conservation law
$\partial_t e_n = \nabla\cdot(U_n X_n)$ of Lemma \ref{lemma:energy}, so over the backward light cone (unit speed) $\int_B e_n(t) \le \int_{B'} e_n(s)$
for $s < t$ in $J$, $B \subset K$ and $B'$ the enlarged cone base; averaging $s$ over $J'$ against
$\int_{J'}\!\int_{B'} e_n = \tfrac12\norm{W_n}_{L^2(J'\times B')}^2$ selects one controlled slice and yields
$\sup_{t\in J}\int_K |W_n(t,\cdot)|^2 \le C\,\norm{W_n}_{L^2(J'\times K')}^2$ over a slightly enlarged cone, uniformly in $n$; and (ii)
follows from $\partial_t U_n = \nabla\cdot X_n$ and $\partial_t X_{k,n} = \partial_{x_k} U_n$, since
$\langle W_n(t,\cdot) - W_n(s,\cdot), \theta\rangle = \int_s^t \langle W_n, \mathcal{L}^*\theta\rangle\,\rd\tau$ with $\mathcal{L}^*$
a fixed first-order operator, bounded by $C(\theta)|t-s|$ via (i). Since $\Phi_2 = \operatorname{id}$, the dual sequence coincides with the primal one,
$Z_n = W_n$, and the system reads
\begin{equation*}
    \partial_t X_{k,n} = \partial_{x_k} U_n, \qquad \partial_t U_n = \nabla\cdot X_n.
\end{equation*}
\emph{Spatial localization.} As $U_n, X_n$ are only locally square-integrable, every $\mathcal{A}_a$ below is understood to act on the fields localized
by a fixed $\chi\in C_c^\infty(\R^d)$ with $\chi\equiv 1$ on a neighbourhood of the $x$-projection of $\operatorname{supp}\varphi$, so that
$\mathcal{A}_a(\chi U_n), \mathcal{A}_a(\chi X_{k,n})\in L^2(\R^d)$ are well-defined. Every discrepancy so introduced (the long-range part
$\mathcal{A}_a((1-\chi)\,\cdot\,)$ and the cutoff commutator
$\mathcal{A}_a((\partial_{x_k}\chi)U_n) = \partial_{x_k}\mathcal{A}_a(\chi U_n) - \mathcal{A}_a(\chi\partial_{x_k}U_n)$) is, on
$\operatorname{supp}\Psi\cup\operatorname{supp}\nabla\Psi$, an integral operator with kernel $\chi_0(x)\,k_a(x-y)\,(1-\chi(y))$
(resp.\ $\chi_0(x)\,k_a(x-y)\,\partial_{x_k}\chi(y)$) whose $x$- and $y$-supports are separated, hence bounded and rapidly decaying;
tested against the fiberwise weakly-null slices $W_n(t,\cdot)\rightharpoonup 0$ (Lemma \ref{lem:evolution}, Step 1) these Hille--Tamarkin
operators contribute $0$ to every limit by dominated convergence. We therefore suppress $\chi$ and compute as though $\mathcal{A}_a$ acted on global $L^2$ fields.

By the definition of the H-measure and integration by parts in $t$,
\begin{equation*}
    \langle\partial_t\nu, a\Psi\rangle = -\langle\nu, a\partial_t\Psi\rangle = \frac12\lim_n\int \Psi\,\partial_t\Big[U_n\mathcal{A}_a U_n + \sum_{k=1}^d X_{k,n}\mathcal{A}_a X_{k,n}\Big].
\end{equation*}
Using the two equations and $[\mathcal{A}_a, \partial_{x_k}] = 0$, the integrand telescopes into a spatial divergence:
\begin{align*}
    \partial_t\Big[U_n\mathcal{A}_a U_n + \sum_k X_{k,n}\mathcal{A}_a X_{k,n}\Big]
    &= \underbrace{(\nabla\cdot X_n)\,\mathcal{A}_a U_n + \sum_k X_{k,n}\,\mathcal{A}_a\partial_{x_k}U_n}_{=\,\operatorname{div}_x(\mathcal{A}_a U_n\, X_n)} \\
    &\quad + \underbrace{U_n\,\mathcal{A}_a(\nabla\cdot X_n) + \sum_k (\partial_{x_k}U_n)\,\mathcal{A}_a X_{k,n}}_{=\,\operatorname{div}_x(U_n\, \mathcal{A}_a X_n)}.
\end{align*}
Substituting and integrating by parts in $x$,
\begin{equation*}
    \langle\partial_t\nu, a\Psi\rangle = -\frac12\lim_n\int \nabla\Psi\cdot\big(\mathcal{A}_a U_n\, X_n + U_n\, \mathcal{A}_a X_n\big) = -\frac12\sum_{k=1}^d\big\langle\mu_{X_k,U} + \mu_{U,X_k},\, a\,\partial_{x_k}\Psi\big\rangle,
\end{equation*}
the first half ($\mathcal{A}_a$ acting on $U_n$) contributing $\mu_{X_k,U}[a]$ and the second ($\mathcal{A}_a$ on $X_{k,n}$) contributing $\mu_{U,X_k}[a]$.
We do not identify these two cross H-measures: the pairing is bilinear, so transferring $\mathcal{A}_a$ between slots is the transpose
$\mathcal{A}_a^\top = \mathcal{A}_{\check a}$ ($\check a(\xi)=a(-\xi)$), yielding only $\mu_{X_k,U}[a] = \mu_{U,X_k}[\check a]$;
the two coincide solely on the even part $\tfrac12(a+\check a)$ of the symbol. Their average is exactly the symmetrised flux
$\pi_k = \tfrac12(\mu_{U,X_k}+\mu_{X_k,U})$, so the right-hand side equals $\langle\operatorname{div}_x\boldsymbol{\pi}, a\Psi\rangle$,
whence $\partial_t\nu = \operatorname{div}_x\boldsymbol{\pi}$ (the identity, established for $\Psi = \varphi^2$, $\varphi\in C_c^1$, extends to all real
$\Psi\in C_c^\infty$ by the polarization $\Psi = \tfrac14[(\chi+\Psi)^2-(\chi-\Psi)^2]$, $\chi\in C_c^\infty$ equal to $1$ on $\operatorname{supp}\Psi$, and
linearity). No information is lost by the symmetrisation: $\nu$ is a diagonal object,
hence even in $\xi$, so $\langle\partial_t\nu, a\Psi\rangle$ depends only on the even part of $a$, and the odd part of $\boldsymbol{\pi}$
is invisible to the identity.
\end{proof}

\begin{remark}[Relation to the classical $L^2$ theory]\label{rem:wave-classical}
Theorem \ref{thm:bicharacteristic_pwave} is not new. At $p = 2$ with constant coefficients the block $\boldsymbol{\mu}$ is the classical Tartar
H-measure of the constant-coefficient wave equation, and $\partial_t\nu = \operatorname{div}_x\boldsymbol{\pi}$ is the $x$-cosphere ($\tau$-marginal)
projection of its transport along the null bicharacteristics of $\tau^2 = |\xi|^2$ \cite{Tartar1990, Tartar2017}. The proof invokes none of the
present paper's new machinery: the Second Commutation Lemma (Theorem \ref{thm:second_comm}) is superfluous here because $\mathcal{A}_a$ commutes with
the constant-coefficient evolution, and the $L^p$ H-distribution framework collapses to an $L^2$ H-measure since $\Phi_2 = \operatorname{id}$.
We include the theorem only to fix the notation $\nu, \boldsymbol{\pi}$ and to exhibit the closed transport law in the one regime where it holds,
thereby setting up, by contrast, the structural failure at $p \neq 2$ isolated in Remark \ref{rem:nonlinear-obstruction}. The Second Commutation Lemma
enters the wave problem through the variable-coefficient refraction computation, which we carry out in Theorem \ref{thm:wave-variable}.
\end{remark}

\begin{theorem}[Microlocal energy transport with refraction: the variable-coefficient linear wave case]\label{thm:wave-variable}
Let $p = 2$, and let $\rho, c \in C^{1,\epsilon}(\R^d)$ be uniformly elliptic and bounded, $0 < \inf_x\rho \le \rho \le \sup_x\rho < \infty$ and likewise
for $c$, with $\nabla\rho, \nabla c \in C_0(\R^d;\R^d)$ uniformly H\"older, so that $\rho, c$ and $\rho^{-1}$ each satisfy Assumptions \ref{ass:spatial_reg}
and \ref{ass:spatial_bound}. Let $W_n = (U_n, X_n) \rightharpoonup 0$ in $L^2_{loc}(\R\times\R^d;\R^{1+d})$ be solutions of the isotropic wave system
\eqref{eq:pwave1}--\eqref{eq:pwave2}, each of class $W^{1,2}_{loc}(\R\times\R^d)$ (no uniform bound on the gradients being imposed;
cf.\ Remark \ref{rem:novacuity}). Let $\boldsymbol{\mu}$ be the block H-measure of $W_n$ furnished componentwise by Lemma \ref{lem:evolution}, and set
$\nu = \tfrac12\rho\,\mu_U + \tfrac12 c\operatorname{Tr}(\mu_X)$ and $\pi_k = \tfrac12(\mu_{U,X_k} + \mu_{X_k,U})$. Then the microlocal energy obeys the
spatial transport law with refraction
\begin{equation}\label{eq:wave-variable}
    \partial_t\nu - \operatorname{div}_x(c\,\boldsymbol{\pi}) = \mathcal{R}
\end{equation}
in $\mathcal{D}'(\R\times\R^d\times\Sp^{d-1})$, where the refraction functional $\mathcal{R}$ acts on real symbols $a\in C^\infty(\Sp^{d-1})$ and test functions
$\Psi\in C_c^\infty(\R\times\R^d)$ by
\begin{equation}\label{eq:refraction}
    \langle\mathcal{R}, a\,\Psi\rangle = \frac12\sum_{k,l=1}^d\Big\langle \pi_k,\ \rho\,\partial_{x_l}\!\big(c/\rho\big)\,\xi_k\,\partial_{\xi_l}a\ \Psi\Big\rangle.
\end{equation}
The symbols $\xi_k\partial_{\xi_l}a$ are homogeneous of degree zero, so \eqref{eq:refraction} is a genuine pairing of the (symmetrised) cross-component
$\pi_k$ of $\boldsymbol{\mu}$ against a degree-zero symbol, manifestly even in $\xi$ as the left-hand side of \eqref{eq:wave-variable} demands; $\mathcal{R}$
is first order in the frequency variable $\xi$, is supported on $\nabla\rho, \nabla c$, and vanishes identically when $\rho, c$ are constant, whereupon
\eqref{eq:wave-variable} reduces to $\partial_t\nu=\operatorname{div}_x(c\,\boldsymbol{\pi})$, and, in the normalisation $\rho=c=1$, to Theorem
\ref{thm:bicharacteristic_pwave}.
\end{theorem}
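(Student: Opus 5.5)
The plan is to repeat the energy-pairing computation from the proof of Theorem \ref{thm:bicharacteristic_pwave}, now with variable coefficients. Wherever a multiplier $M_\rho$, $M_{\rho^{-1}}$ or $M_c$ fails to commute with $\mathcal{A}_a$, Theorem \ref{thm:second_comm}(II) produces the principal symbol and Lemma \ref{lem:building}(2) disposes of the undifferentiated commutators. At $p=2$ the system reads
\[
\partial_t X_{k,n}=\partial_{x_k}U_n,\qquad \partial_t U_n=\rho^{-1}\nabla\cdot(cX_n).
\]

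\emph{Step 0: verifying the hypotheses of Lemma \ref{lem:evolution}.} First I check that Lemma \ref{lem:evolution} applies.
\begin{itemize}
\item For (i), the energy identity of Lemma \ref{lemma:energy} with $p=2$ gives $\partial_t e_n=\nabla\cdot(U_n c X_n)$. The uniformly elliptic, bounded coefficients yield finite propagation speed $\le \sup\sqrt{c/\rho}$, so the cone-averaging argument of Theorem \ref{thm:bicharacteristic_pwave} gives local uniform slice bounds.
\item For (ii), write $\langle W_n(t)-W_n(s),\theta\rangle$ as the time integral of $W_n$ against a fixed first-order adjoint operator, with $\rho^{-1}$ absorbed into the test function.
\end{itemize}
I then localize by $\chi$ exactly as in the constant-coefficient proof, so that all fields are treated as global $L^2$ fields.

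\emph{Step 1: differentiating the energy pairing.} Fix a real $a$ and $\Psi=\varphi^2$. I write
\[
\langle\partial_t\nu,a\Psi\rangle=\tfrac12\lim_n\int\Psi\,\partial_t\Big[\rho U_n\mathcal{A}_aU_n+c\sum_k X_{k,n}\mathcal{A}_aX_{k,n}\Big],
\]
and substitute the equations. Two groups close exactly by the Leibniz rule:
\[
\nabla\cdot(cX_n)\,\mathcal{A}_aU_n+c\sum_kX_{k,n}\mathcal{A}_a\partial_kU_n=\operatorname{div}_x\big(cX_n\,\mathcal{A}_aU_n\big),
\]
and $U_n\,\partial_k(c\,\mathcal{A}_aX_{k,n})+c\,\partial_kU_n\,\mathcal{A}_aX_{k,n}=\partial_k(cU_n\mathcal{A}_aX_{k,n})$. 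After integration by parts in $x$, these two divergences produce $\langle\operatorname{div}_x(c\boldsymbol{\pi}),a\Psi\rangle$, just as before. Passing $c$ through the limit is legitimate because $\mu$ has order $0$ in $(t,x)$ and $c$ is continuous.

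What remains is the commutator residue
\[
U_n\,\rho\,[\mathcal{A}_a,M_{\rho^{-1}}]\,\partial_k(cX_{k,n})\;+\;U_n\,\partial_k[\mathcal{A}_a,M_c]X_{k,n}.
\]

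\emph{Step 2: extracting the principal symbols.} I rewrite
\[
[\mathcal{A}_a,M_{\rho^{-1}}]\partial_k=\partial_k[\mathcal{A}_a,M_{\rho^{-1}}]-[\mathcal{A}_a,M_{\partial_k\rho^{-1}}].
\]
The last operator is compact by Lemma \ref{lem:building}(2). To both differentiated commutators I apply Theorem \ref{thm:second_comm}(II) in factorised form,
\[
\partial_k[\mathcal{A}_a,M_b]=\sum_l M_{\partial_lb}\,\mathcal{A}_{b_{kl}}+R_k,\qquad b_{kl}=\xi_k\partial_{\xi_l}a,
\]
with compact remainders. Moving $M_c$ through $\mathcal{A}_{b_{kl}}$ costs one more compact commutator (Lemma \ref{lem:building}(2), since $c$ need not be in $C_0$; I will localize $c$ by $\chi$ first, as in Section \ref{sec:comparison}, to make this legitimate). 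After that, the principal amplitude collapses to
\[
\partial_lc+\rho\,c\,\partial_l(\rho^{-1})=\rho\,\partial_l(c/\rho).
\]
All compact pieces act on $cX_n$ or $X_n$, which are fiberwise weakly null, so they vanish in the limit by dominated convergence exactly as in Theorem \ref{thm:bicharacteristic_first_order}. The surviving term is
\[
\tfrac12\lim_n\int\Psi\,U_n\,\rho\,\partial_l(c/\rho)\,\mathcal{A}_{b_{kl}}X_{k,n}=\tfrac12\big\langle\mu_{U,X_k},\rho\,\partial_l(c/\rho)\,b_{kl}\Psi\big\rangle,
\]
where the continuous coefficient is handled by the uniform-approximation argument of Theorem \ref{thm:bicharacteristic_first_order}.

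\emph{Step 3: symmetrization and the main obstacle.} This step is where I expect the main difficulty, because the residue comes out with the one-sided cross measure $\mu_{U,X_k}$ rather than $\pi_k$. My plan is to use that the left side of \eqref{eq:wave-variable} depends only on the even part of $a$ (since $\nu$ is diagonal). If $a$ is even, then $b_{kl}=\xi_k\partial_{\xi_l}a$ is even as well, because $\xi_k$ and $\partial_{\xi_l}a$ are both odd. Hence the transpose relation $\mu_{U,X_k}[b]=\mu_{X_k,U}[\check b]$ gives $\mu_{U,X_k}[b_{kl}]=\pi_k[b_{kl}]$, which yields \eqref{eq:refraction}; for odd $a$ both sides vanish. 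I will carefully track the signs from the integration by parts and from the reflection $\check a$. Finally, polarization $\Psi=\tfrac14[(\chi+\Psi)^2-(\chi-\Psi)^2]$ extends the identity to all test functions $\Psi$.
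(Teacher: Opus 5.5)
Your proposal is correct and follows the paper's proof essentially step for step. It uses the same telescoping of $\partial_t D_n$ and applies the Second Commutation Lemma to $[\mathcal{A}_a,M_c]$ and $[\mathcal{A}_a,M_{\rho^{-1}}]$, with Lemma \ref{lem:building}(2) for $[\mathcal{A}_a,M_{\partial_k\rho^{-1}}]$. It reaches the same amplitude collapse to $\rho\,\partial_{x_l}(c/\rho)$ and the same even/odd symmetrisation via the transpose relation. The differences are only cosmetic: your grouping yields $U_n\,\partial_k[\mathcal{A}_a,M_c]X_{k,n}$ directly, bypassing the paper's $\mathcal{R}^{(1)}+\mathcal{R}^{(2)}$ integration-by-parts cancellation, and you dispose of $[\mathcal{A}_{b_{kl}},M_c]$ by localising $c$ and invoking Lemma \ref{lem:building}(2), where the paper uses Lemma \ref{lem:building}(1) with the $C_0$ prefactor $\partial_{x_l}\rho^{-1}$ and Theorem \ref{thm:inhomogeneous}; both are valid.
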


\begin{proof}
Throughout, $a\in C^\infty(\Sp^{d-1})$ is real and $\Psi\in C_c^\infty(\R\times\R^d)$; $\mathcal{A}_a$ acts in $x$ alone.
Abbreviate $b_{kl}(\xi) := \xi_k\,\partial_{\xi_l}a(\xi)$, homogeneous of degree zero and, since $a\in C^\infty$, smooth on $\Sp^{d-1}$; thus each
$\mathcal{A}_{b_{kl}}$ satisfies Assumption \ref{ass:symbol}, and together with $M_c, M_\rho, M_{\rho^{-1}}$ (which satisfy Assumptions
\ref{ass:spatial_reg}--\ref{ass:spatial_bound}) falls under the Second Commutation Lemma (Theorem \ref{thm:second_comm}), the inhomogeneous bound
(Theorem \ref{thm:inhomogeneous}), and the building blocks (Lemma \ref{lem:building}). Since $\rho$ and $c$ are independent of $t$, every operator
built from them below ($[\mathcal{A}_a,M_c]$, $[\mathcal{A}_a,M_{\rho^{-1}}]$, the principal parts $S_k(\cdot)$ and the compact remainders
$R_k(\cdot)$) is itself independent of $t$, so their $L^2$-bounds and compactness are uniform in $t$ trivially (the parametric uniformity of Remark
\ref{rem:parameter} is not needed here).

\emph{Existence, localization, reduction.} Since $\Phi_2 = \operatorname{id}$, the dual sequence coincides with the primal, $Z_n = W_n$, and the system reads
\begin{equation}\label{eq:pwave-p2}
    \partial_t X_{k,n} = \partial_{x_k}U_n, \qquad \rho\,\partial_t U_n = \nabla\cdot(cX_n) = \sum_{m}\partial_{x_m}(cX_{m,n}).
\end{equation}
The hypotheses of Lemma \ref{lem:evolution} hold as in Theorem \ref{thm:bicharacteristic_pwave}: (i) the local energy identity of Lemma \ref{lemma:energy} at
$p = 2$, with $e_n = \tfrac12\rho U_n^2 + \tfrac12 c|X_n|^2 \ge \tfrac12\min(\inf\rho, \inf c)\,|W_n|^2$ and flux $U_n cX_n$: over the backward cone of speed
$\sqrt{\sup(c/\rho)}$ the coercive energy obeys $\int_B e_n(t) \le \int_{B'} e_n(s)$ ($s < t$ in $J$, $B'$ the enlarged base), and averaging $s$ over $J'$
against $\int_{J'}\!\int_{B'} e_n \le C\norm{W_n}_{L^2(J'\times K')}^2$ bounds $\sup_{t\in J}\norm{W_n(t,\cdot)}_{L^2(K)}$ uniformly in $n$; (ii) equicontinuity of the slices follows from \eqref{eq:pwave-p2}, whose right-hand sides
are fixed first-order operators with bounded coefficients. Hence $\boldsymbol{\mu}$ exists (along a subsequence, not relabelled), each component of order $0$ in
$(t,x)$. As in Theorem \ref{thm:bicharacteristic_pwave} we localize every $\mathcal{A}_a$ and $\mathcal{A}_{b_{kl}}$ by a fixed $\chi\in C_c^\infty(\R^d)$ equal
to $1$ on a neighbourhood of the $x$-projection of $\operatorname{supp}\Psi$; the induced Hille--Tamarkin discrepancies have kernels with separated supports and
contribute $0$ to every limit by fiberwise weak nullity and dominated convergence. We suppress $\chi$ below.

We use repeatedly the following \emph{fiberwise compactness principle}, established in the proof of Theorem \ref{thm:bicharacteristic_first_order}
(paragraph ``The remainder pairings vanish'') and again in Theorem \ref{thm:bicharacteristic_pwave}: if $K$ is a fixed operator, compact on $L^2(\R^d)$
(all the operators $K$ to which we apply this are $t$-independent, being built from the $t$-independent $\rho, c$), then for slices
$Y_n(t,\cdot)\rightharpoonup 0$ in $L^2$ and $Z_n(t,\cdot)$ bounded in $L^2$ uniformly on $\operatorname{supp}\Psi$,
\begin{equation}\label{eq:fiber}
    \lim_n\int \Psi\, Z_n\, K(t)Y_n\,\rd x\,\rd t = 0,
\end{equation}
since $K Y_n(t,\cdot)\to 0$ strongly in $L^2$ for a.e.\ $t$ (compactness together with the fiberwise weak nullity of Lemma \ref{lem:evolution}, Step~1)
and the space integral is dominated by $C\operatorname*{ess\,sup}_t\norm{Y_n(t)}_{L^2}\operatorname*{ess\,sup}_t\norm{Z_n(t)}_{L^2}\mathbf 1_J\in L^1(\rd t)$.
Here and below the fiberwise weak nullity, the uniform slice bounds, and the everywhere-defined weakly continuous slice representatives are those of a
single subsequence, extracted once by the finite diagonal argument of Lemma \ref{lem:evolution} so as to serve the block $\boldsymbol{\mu}$ and all its
cross-components $\mu_U,\mu_{U,X_k},\mu_{X_k,U},\mu_{X_k,X_l}$ simultaneously; every appearance of $Y_n\in\{U_n,X_{k,n},cX_{m,n}\}$ and $Z_n=U_n$ in
\eqref{eq:fiber} refers to these same representatives.

By the definition of $\nu$, the $t$-independence of $\rho, c$, and integration by parts in $t$,
\begin{equation}\label{eq:dtnu}
    \langle\partial_t\nu, a\Psi\rangle = -\langle\nu, a\,\partial_t\Psi\rangle = \tfrac12\lim_n\int \Psi\,\partial_t D_n, \qquad D_n := \rho\,U_n\mathcal{A}_a U_n + c\sum_{k}X_{k,n}\mathcal{A}_a X_{k,n},
\end{equation}
the differentiation $\partial_t D_n$ being licit for each $n$ since $U_n, X_n\in W^{1,2}_{loc}$ and $\mathcal{A}_a$ commutes with $\partial_t$.

\emph{Step 1: exact telescoping.} Using \eqref{eq:pwave-p2}, $[\mathcal{A}_a,\partial_{x_k}] = 0$, and the exact identities
$\rho\,\mathcal{A}_a g = \mathcal{A}_a(\rho g) - [\mathcal{A}_a,M_\rho]g$ and
$\mathcal{A}_a(cX_{k,n}) = c\,\mathcal{A}_a X_{k,n} + [\mathcal{A}_a,M_c]X_{k,n}$, we obtain the pointwise-a.e.\ identity
\begin{align}
    \partial_t D_n
    &= \underbrace{[\nabla\cdot(cX_n)]\,\mathcal{A}_a U_n + \sum_k cX_{k,n}\,\partial_{x_k}\mathcal{A}_a U_n}_{=\,\operatorname{div}_x(c\,\mathcal{A}_a U_n\,X_n)} \notag\\
    &\quad + \underbrace{U_n\,\mathcal{A}_a(\nabla\cdot(cX_n)) + \sum_k c(\partial_{x_k}U_n)\,\mathcal{A}_a X_{k,n}}_{=\,\operatorname{div}_x(U_n\,\mathcal{A}_a(cX_n))\ -\ \sum_k(\partial_{x_k}U_n)[\mathcal{A}_a,M_c]X_{k,n}} \notag\\
    &\quad - U_n\,[\mathcal{A}_a,M_\rho]\,\partial_t U_n. \label{eq:telescope}
\end{align}
The first bracket is an exact Leibniz divergence. For the second, $\mathcal{A}_a(\nabla\cdot(cX_n)) = \sum_k\partial_{x_k}\mathcal{A}_a(cX_{k,n})$;
writing $\sum_k U_n\partial_{x_k}\mathcal{A}_a(cX_{k,n}) = \operatorname{div}_x(U_n\mathcal{A}_a(cX_n)) - \sum_k(\partial_{x_k}U_n)\mathcal{A}_a(cX_{k,n})$
and inserting $\mathcal{A}_a(cX_{k,n}) = c\mathcal{A}_a X_{k,n} + [\mathcal{A}_a,M_c]X_{k,n}$ gives the stated underbrace. In the last term,
$\partial_t U_n = \rho^{-1}\nabla\cdot(cX_n)$ by \eqref{eq:pwave-p2}.

Substituting \eqref{eq:telescope} into \eqref{eq:dtnu} and integrating the two divergences by parts against $\Psi$,
\begin{equation}\label{eq:assembled}
    \langle\partial_t\nu, a\Psi\rangle = -\tfrac12\sum_k\lim_n\int (\partial_{x_k}\Psi)\big[c\,\mathcal{A}_a U_n\,X_{k,n} + U_n\,\mathcal{A}_a(cX_{k,n})\big] + \mathcal{R}^{(2)} + \mathcal{R}^{(3)},
\end{equation}
where
\begin{equation*}
    \mathcal{R}^{(2)} := -\tfrac12\sum_k\lim_n\int \Psi\,(\partial_{x_k}U_n)[\mathcal{A}_a,M_c]X_{k,n}, \qquad
    \mathcal{R}^{(3)} := -\tfrac12\lim_n\int \Psi\,U_n[\mathcal{A}_a,M_\rho]\partial_t U_n.
\end{equation*}

\emph{Step 2: the flux limit.} In the bracket of \eqref{eq:assembled} split $\mathcal{A}_a(cX_{k,n}) = c\mathcal{A}_a X_{k,n} + [\mathcal{A}_a,M_c]X_{k,n}$.
The two ``clean'' pieces converge by the definition of $\boldsymbol{\mu}$ (the continuous coefficient $c$ being admissible since $\mu_{U,X_k},\mu_{X_k,U}$
have order $0$ in $(t,x)$):
\begin{equation*}
    \lim_n\int(\partial_{x_k}\Psi)\,c\,\mathcal{A}_a U_n\,X_{k,n} = \langle\mu_{X_k,U}, c\,a\,\partial_{x_k}\Psi\rangle, \quad
    \lim_n\int(\partial_{x_k}\Psi)\,U_n\,c\,\mathcal{A}_a X_{k,n} = \langle\mu_{U,X_k}, c\,a\,\partial_{x_k}\Psi\rangle,
\end{equation*}
($\mathcal{A}_a$ on the $U$-slot yielding $\mu_{X_k,U}$, on the $X$-slot yielding $\mu_{U,X_k}$). Their half-sum is $\langle c\,\pi_k, a\,\partial_{x_k}\Psi\rangle$, and $-\sum_k\langle c\pi_k, a\partial_{x_k}\Psi\rangle = \langle\operatorname{div}_x(c\boldsymbol{\pi}), a\Psi\rangle$ (as $a$ is $\xi$-only). The leftover commutator piece is
\begin{equation*}
    \mathcal{R}^{(1)} := -\tfrac12\sum_k\lim_n\int(\partial_{x_k}\Psi)\,U_n[\mathcal{A}_a,M_c]X_{k,n},
\end{equation*}
so that $\langle\partial_t\nu - \operatorname{div}_x(c\boldsymbol{\pi}), a\Psi\rangle = \mathcal{R}^{(1)}+\mathcal{R}^{(2)}+\mathcal{R}^{(3)}$.

\emph{Step 3: the $c$-refraction $\mathcal{R}^{(1)}+\mathcal{R}^{(2)}$.} Since $U_n\in W^{1,2}_{loc}$ and $[\mathcal{A}_a,M_c]X_{k,n}\in W^{1,2}_{loc}$
(Theorem \ref{thm:inhomogeneous}, $c\in C^{1,\epsilon}\cap L^\infty$), integration by parts in $x_k$ gives
\begin{equation*}
    \int\Psi(\partial_{x_k}U_n)[\mathcal{A}_a,M_c]X_{k,n} = -\int U_n(\partial_{x_k}\Psi)[\mathcal{A}_a,M_c]X_{k,n} - \int U_n\Psi\,\partial_{x_k}\!\big([\mathcal{A}_a,M_c]X_{k,n}\big),
\end{equation*}
whose first summand cancels $\mathcal{R}^{(1)}$ exactly, leaving
\begin{equation*}
    \mathcal{R}^{(1)}+\mathcal{R}^{(2)} = \tfrac12\sum_k\lim_n\int U_n\,\Psi\,\partial_{x_k}\!\big([\mathcal{A}_a,M_c]X_{k,n}\big).
\end{equation*}
By the Second Commutation Lemma (Theorem \ref{thm:second_comm}(II)), $\partial_{x_k}[\mathcal{A}_a,M_c] = S_k(c) + R_k(c)$ with $R_k(c)$ compact on $L^2$ and
$S_k(c) = \operatorname{Op}\big(\xi_k\sum_l\partial_{\xi_l}a\,\partial_{x_l}c\big) = \sum_l M_{\partial_{x_l}c}\,\mathcal{A}_{b_{kl}}$.
The remainder $R_k(c)X_{k,n}$ contributes $0$ by \eqref{eq:fiber}; hence, using $\partial_{x_l}c\in C_0\subset C$,
\begin{equation}\label{eq:R12}
    \mathcal{R}^{(1)}+\mathcal{R}^{(2)} = \tfrac12\sum_{k,l}\lim_n\int (\partial_{x_l}c)\,\Psi\,U_n\,\mathcal{A}_{b_{kl}}X_{k,n} = \tfrac12\sum_{k,l}\big\langle\mu_{U,X_k},\ (\partial_{x_l}c)\,\xi_k\partial_{\xi_l}a\,\Psi\big\rangle.
\end{equation}

\emph{Step 4: the $\rho$-refraction $\mathcal{R}^{(3)}$.} From $M_\rho M_{\rho^{-1}} = I$ one has the exact operator identity
$[\mathcal{A}_a,M_\rho]M_{\rho^{-1}} = -M_\rho[\mathcal{A}_a,M_{\rho^{-1}}]$; with $\partial_t U_n = \rho^{-1}\nabla\cdot(cX_n)$
this gives $U_n[\mathcal{A}_a,M_\rho]\partial_t U_n = -U_n\,\rho\,[\mathcal{A}_a,M_{\rho^{-1}}]\nabla\cdot(cX_n)$, so
\begin{equation*}
    \mathcal{R}^{(3)} = \tfrac12\sum_m\lim_n\int (\rho\Psi)\,U_n\,[\mathcal{A}_a,M_{\rho^{-1}}]\,\partial_{x_m}(cX_{m,n}).
\end{equation*}
Using $[\mathcal{A}_a,M_{\rho^{-1}}]\partial_{x_m} = \partial_{x_m}[\mathcal{A}_a,M_{\rho^{-1}}] - [\mathcal{A}_a,M_{\partial_{x_m}\rho^{-1}}]$,
\begin{equation*}
    [\mathcal{A}_a,M_{\rho^{-1}}]\partial_{x_m}(cX_{m,n}) = \partial_{x_m}\!\big([\mathcal{A}_a,M_{\rho^{-1}}](cX_{m,n})\big) - [\mathcal{A}_a,M_{\partial_{x_m}\rho^{-1}}](cX_{m,n}).
\end{equation*}
Since $\partial_{x_m}\rho^{-1} = -\rho^{-2}\partial_{x_m}\rho\in C^\epsilon\cap C_0$, the commutator $[\mathcal{A}_a,M_{\partial_{x_m}\rho^{-1}}]$ is
compact on $L^2$ (Lemma \ref{lem:building}(2)), so its term vanishes by \eqref{eq:fiber}. For the first term, Theorem \ref{thm:second_comm}(II) gives
$\partial_{x_m}[\mathcal{A}_a,M_{\rho^{-1}}] = S_m(\rho^{-1}) + R_m(\rho^{-1})$, $R_m(\rho^{-1})$ compact (vanishing by \eqref{eq:fiber}) and
$S_m(\rho^{-1}) = \sum_l M_{\partial_{x_l}\rho^{-1}}\mathcal{A}_{b_{ml}}$. In
\begin{equation*}
    S_m(\rho^{-1})(cX_{m,n}) = \sum_l(\partial_{x_l}\rho^{-1})\big(c\,\mathcal{A}_{b_{ml}}X_{m,n} + [\mathcal{A}_{b_{ml}},M_c]X_{m,n}\big),
\end{equation*}
the operator $M_{\partial_{x_l}\rho^{-1}}[\mathcal{A}_{b_{ml}},M_c]$ is compact on $L^2$: indeed $[\mathcal{A}_{b_{ml}},M_c]\in\mathcal{L}(L^2,W^{1,2})$
(Theorem \ref{thm:inhomogeneous}) and $\partial_{x_l}\rho^{-1}\in C_0$, so Lemma \ref{lem:building}(1) (with $\sigma = 1$) applies; this term too vanishes
by \eqref{eq:fiber}. The surviving contribution, using $\rho\,\partial_{x_l}\rho^{-1} = -\rho^{-1}\partial_{x_l}\rho$, is
\begin{equation*}
    \mathcal{R}^{(3)} = \tfrac12\sum_{k,l}\lim_n\int \big(\rho c\,\partial_{x_l}\rho^{-1}\,\Psi\big)\,U_n\,\mathcal{A}_{b_{kl}}X_{k,n} = -\tfrac12\sum_{k,l}\big\langle\mu_{U,X_k},\ c\rho^{-1}(\partial_{x_l}\rho)\,\xi_k\partial_{\xi_l}a\,\Psi\big\rangle.
\end{equation*}

\emph{Step 5: assembly.} Adding \eqref{eq:R12} and $\mathcal{R}^{(3)}$ and using $\partial_{x_l}c - c\rho^{-1}\partial_{x_l}\rho = \rho\,\partial_{x_l}(c/\rho)$,
\begin{equation*}
    \mathcal{R}^{(1)}+\mathcal{R}^{(2)}+\mathcal{R}^{(3)} = \tfrac12\sum_{k,l}\big\langle\mu_{U,X_k},\ \rho\,\partial_{x_l}(c/\rho)\,\xi_k\partial_{\xi_l}a\,\Psi\big\rangle.
\end{equation*}
The left-hand side of \eqref{eq:wave-variable} is even in $\xi$ ($\nu$ is diagonal and $\boldsymbol{\pi}$ symmetrised), so this functional of $a$ depends only
on its even part $a_e = \tfrac12(a+\check a)$, $\check a(\xi) := a(-\xi)$. Writing $\check m(\xi):=m(-\xi)$ for the frequency reflection, the transpose relation
$\mu_{X_k,U}[m] = \mu_{U,X_k}[\check m]$ together with the identity $(\xi_k\partial_{\xi_l}a)(-\xi) = \xi_k\,\partial_{\xi_l}\check a(\xi)$ shows that replacing
$\mu_{U,X_k}$ by the symmetrised flux $\pi_k = \tfrac12(\mu_{U,X_k}+\mu_{X_k,U})$ leaves the sum unchanged, yielding the intrinsic form
\begin{equation*}
    \mathcal{R}^{(1)}+\mathcal{R}^{(2)}+\mathcal{R}^{(3)} = \tfrac12\sum_{k,l}\big\langle\pi_k,\ \rho\,\partial_{x_l}(c/\rho)\,\xi_k\partial_{\xi_l}a\,\Psi\big\rangle = \langle\mathcal{R}, a\Psi\rangle,
\end{equation*}
which is \eqref{eq:refraction}. When $\rho, c$ are constant each $\partial_{x_l}(c/\rho)$ vanishes, so $\mathcal{R}\equiv 0$ and \eqref{eq:wave-variable}
collapses to $\partial_t\nu = \operatorname{div}_x(c\boldsymbol{\pi})$ (with $c$ a positive constant scalar); in the normalisation $\rho = c = 1$ this is
exactly Theorem \ref{thm:bicharacteristic_pwave}. This establishes \eqref{eq:wave-variable}.
\end{proof}

\begin{remark}[Bicharacteristic reading and the classical $L^2$ status]\label{rem:wave-refraction}
Theorem \ref{thm:wave-variable} is the $x$-cosphere ($\tau$-marginal) projection of the classical transport of the wave H-measure along the null bicharacteristics
of the symbol $\rho\tau^2 = c|\xi|^2$ \cite{Tartar2017} (whose H-measure lives on the space--time cosphere, while $\nu, \boldsymbol{\pi}$ are its $x$-only
projections). The local squared wave speed is $c/\rho$, and the refraction functional $\mathcal{R}$ is driven precisely by its spatial gradient $\nabla(c/\rho)$,
the frequency-bending $\dot\xi_l \propto -\partial_{x_l}(c/\rho)$ of geometric optics. The transport is truly spatial: the microlocal energy is carried at
the group velocity through the flux $\operatorname{div}_x(c\boldsymbol{\pi})$ and is not stationary even in a homogeneous medium. As at $p = 2$ the block
$\boldsymbol{\mu}$ is the classical Tartar H-measure and $\Phi_2 = \operatorname{id}$, the result is not new as an $L^2$ statement; its role is to exhibit
the Second Commutation Lemma (Theorem \ref{thm:second_comm}) at work, commuting $\mathcal{A}_a$ past the variable multipliers $\rho, c$ and
reducing the resulting non-local commutators to the principal-symbol refraction plus compact remainders, in the one wave regime where the microlocal hierarchy
closes. The obstruction to the analogous $p\neq 2$ statement is isolated in Remark \ref{rem:nonlinear-obstruction}.
\end{remark}

\begin{remark}[The nonlinear obstruction for $p\neq 2$]\label{rem:nonlinear-obstruction}
For $p\neq 2$ the microlocal energy does not close within the block $\boldsymbol{\mu}$. The system prescribes the evolution of the dual momentum
$\Phi_p(U_n)$ (equation \eqref{eq:pwave2}) and of the strain $X_n$ (equation \eqref{eq:pwave1}), but the velocity $U_n$ itself carries no evolution
equation, only $\Phi_p(U_n)$ does. Consequently, differentiating the diagonal $\mu_U = $\,H-dist$(U_n, \Phi_p(U_n))$ in time yields, after integration
by parts and use of \eqref{eq:pwave1}--\eqref{eq:pwave2}, two irreducible contributions: a term pairing $\partial_t U_n$ (for which the system provides
no evolution equation) against $\mathcal{A}_a\Phi_p(U_n)$, and a time-differentiated cross term pairing $\partial_t X_{k,n} = \partial_{x_k}U_n$
against $\mathcal{A}_a\Phi_p(X_n)_k$. Neither is expressible through the components of $\boldsymbol{\mu}$: the true obstruction is the absence
from the block $\boldsymbol{\mu}$ of an evolution law for $U_n$ and of the time-differentiated cross pairs $(\partial_t X_n, \Phi_p(X_n))$ adjoined in Section
\ref{sec:future}, a sharper statement than the (correct but secondary) observation that $\mathcal{A}_a$ does not commute with $\Phi_p$. The hierarchy therefore
fails to close. The symmetrised Poynting-flux term $c\,\boldsymbol{\pi}$ is nonetheless correctly identified as the spatial energy flux, exactly as in the
linear case. Establishing a closed microlocal energy transport for the quasilinear $p$-wave system ($p\neq 2$), equivalently, identifying the enlarged
family of H-distributions (including those built from the time-derivatives $\partial_t W_n$) that closes the system, is an open problem, of a piece with
the anisotropic barrier of Remark \ref{rem:anisotropic}.
\end{remark}

\begin{remark}[The Anisotropic Barrier]\label{rem:anisotropic}
The restriction to an isotropic scalar $c(x)$ in the preceding system is a functional analytic necessity, not merely a convenience. For a general symmetric
anisotropic tensor $C(x) \in \mathcal{M}_{d\times d}$, the algebraic symmetry required for Lemma \ref{lemma:energy} breaks: the weighted spatial energy density
$|X|^{p-2}(CX)\cdot\partial_t X$ is an exact $t$-derivative only when $C$ is a scalar multiple of the identity, $C = cI$ (for then
$|X|^{p-2}(CX)\cdot\partial_t X = c\,|X|^{p-2}X\cdot\partial_t X = \tfrac{c}{p}\,\partial_t|X|^p$), whereas for genuinely anisotropic $C$ the Euclidean weight
$|X|^{p-2}$ and the quadratic form $CX\cdot X$ are mismatched and no such collapse occurs. This obstruction is proper to $p \neq 2$: at $p = 2$ the weight
is absent and every symmetric, time-independent $C$ yields the exact identity $(CX)\cdot\partial_t X = \tfrac12\partial_t(CX\cdot X)$, which is precisely
why the linear wave theory of Section \ref{sec:wave_eq} (Theorem \ref{thm:wave-variable}) accommodates variable, indeed in principle anisotropic, media.

To recover a valid local energy identity for generic anisotropic media, one must abandon the standard Euclidean Nemyckij operator and define an anisotropic
metric dual: $\Phi_{p,C}(X) = (C(x)X \cdot X)^{\frac{p-2}{2}} C(x)X$. However, doing so absorbs the spatial variable $C(x)$ into the nonlinear topology of
the dual sequence $L^{p'}$. Consequently, the pseudo-differential multiplier $\mathcal{A}_a$ no longer crosses $C(x)$ independently, preventing the Second
Commutation Lemma from extracting the refraction Poisson bracket $\{a, C_{jk}\}$. Thus, in global $L^p$ spaces, anisotropic geometric refraction is entangled
with the nonlinear dual topology, which remains an open problem for the generalisation of geometric optics.
\end{remark}

\section{Future Directions and Open Problems}\label{sec:future}

The global $L^p$ second commutation lemma, together with the algebraic stabilization provided by the canonical dual sequence, opens several directions in the
study of microlocal concentration. The functional analytic framework developed here provides tools for several open problems in partial differential equations;
two of these arise directly as structural limitations of the transport results proved above.

\subsection{Closing the Microlocal Energy Hierarchy for Quasilinear $p$-Waves}
Theorem \ref{thm:bicharacteristic_pwave} establishes the microlocal energy transport for the isotropic wave system only in the linear core $p = 2$, where the
primal and dual sequences coincide and the energy density lifts to a closed conservation law carried by the Poynting flux $\boldsymbol{\pi}$. For $p \neq 2$
this closure fails, and we identify a structural, not merely technical, obstruction to it (Remark \ref{rem:nonlinear-obstruction}): the system evolves the
dual momentum $\Phi_p(U_n)$ and the strain $X_n$, but not the velocity $U_n$ itself, while the Fourier multiplier $\mathcal{A}_a$ does not commute with the
nonlinear Nemyckij map $\Phi_p$. Differentiating the diagonal $\mu_U$ in time therefore generates a bilinear limit that is not expressible through the
components of the block $\boldsymbol{\mu}$.

The open problem is to identify the enlarged family of microlocal objects that closes this hierarchy. A natural candidate is to adjoin the cross H-distributions
coupling the time-differentiated sequences $\partial_t W_n$ to the dual momenta, the pairs $(\partial_t U_n, \Phi_p(U_n))$ and $(\partial_t X_n, \Phi_p(X_n))$.
This must be done with care: a uniform $L^p_{loc}$ bound on $\partial_t W_n$ would, through the compatibility relation $\partial_t X_n = \nabla U_n$,
place $U_n$ in a bounded set of $W^{1,p}_{loc}$ and hence force $U_n \to 0$ strongly (Rellich--Kondrachov, exactly as in Remark \ref{rem:novacuity}),
collapsing $\mu_U$ (and with it the entire Poynting vector $\boldsymbol{\pi}$) to zero. A viable enlargement must therefore renormalise, pairing
$\partial_t W_n / \omega_n$ against $\Phi_p(W_n)$ with $\omega_n$ a characteristic frequency, in the spirit of second-microlocal (semiclassical) objects;
the choice of normalisation is itself part of the open problem. One then seeks a finite system of transport equations relating these renormalised objects to
$\boldsymbol{\mu}$. Establishing such a closed hierarchy would upgrade the Poynting-flux law of Theorem \ref{thm:bicharacteristic_pwave} to a true
bicharacteristic transport with refraction for the full quasilinear $p$-wave system, and would decide whether the correct geometric optics of $p$-waves is
governed by a finite-rank system or by an infinite tower of microlocal moments.

\subsection{Decoupling in the $p$-Elasticity System (Cauchy-Lam\'e)}
Tartar \cite{Tartar2017} demonstrated that for the classical linearised elasticity system, the $L^2$ H-measure matrix decouples the propagation of energy
into longitudinal (P-waves) and transverse (S-waves) components. This decoupling is dictated by the eigenspaces of the acoustic tensor. However, as discussed
in Section \ref{sec:wave_eq}, linear hyperbolic systems lose their energetic isometry in the global $L^p$ setting. 

Future work must investigate the quasilinear $p$-Elasticity system. This requires pairing the velocity field $U^n = \partial_t u^n \in L_{loc}^p(\R^d;\R^m)$
and the strain $X^n = \nabla u^n$ (properly, its symmetric part $\tfrac12(\nabla u^n + (\nabla u^n)^\top)$) with \emph{frame-indifferent} canonical Nemyckij
duals \cite{AntonicErcegMisur2021} (the Euclidean $\Phi_p(U^n) = |U^n|^{p-2}U^n$ and the Frobenius $\Phi_p(X^n) = |X^n|^{p-2}X^n$) rather than
basis-dependent componentwise duals $|X_{ij}^n|^{p-2}X_{ij}^n$, which would entangle the P/S-decoupling question with the choice of axes. The central open
question is whether the resulting block H-distribution still permits a geometric decoupling of pressure and shear waves in the Banach space framework, or
if the non-Euclidean geometry ($p \neq 2$) algebraically entangles the characteristic varieties. For $p \neq 2$ this decoupling question is
downstream of, and conditional upon, the closure problem discussed above: the elasticity energy hierarchy inherits, in aggravated, coupled form, the
same non-closure obstruction as the scalar $p$-wave system (Remark \ref{rem:nonlinear-obstruction}), so a closed propagation must first be secured before
its decoupling into P- and S-waves can even be posed. 

Resolving this will require using the $L^p$ second commutation lemma to push pseudo-differential multipliers through the variable, fourth-order elasticity
tensor $C_{ij;kl}(x)$, and subsequently projecting the resulting Poisson brackets onto the generalized $p$-acoustic eigenspaces.

\subsection{Microlocal Analysis of Quasilinear $p$-Growth Problems}
Classical $L^2$ H-measures are inadequate for strongly nonlinear problems governed by $p$-growth energy functionals, which arise in non-Newtonian fluid
dynamics and large-strain nonlinear elasticity. The archetypal model for these systems is the $p$-Laplacian equation:
\begin{equation*}
    -\nabla \cdot (|\nabla u_n|^{p-2}\nabla u_n) = f_n.
\end{equation*}

The observation provided by the $L^p$-$L^{p'}$ duality framework is that the nonlinear bulk term inside the divergence, $|\nabla u_n|^{p-2}\nabla u_n$,
is the canonical dual sequence $\Phi_p(\nabla u_n)$. Consequently, the H-distribution $\mu$ generated by the sequence pair
$\big(\nabla u_n, \Phi_p(\nabla u_n)\big)$ represents the concentration of stress and strain energy within the nonlinear material. 

Because quasilinear PDEs lack the superposition principle, attempting to linearise these operators via standard microlocal techniques generates non-compact
remainders. The linearisation moreover produces coefficients $|\nabla u_n|^{p-2}\big(\mathrm{Id} + (p-2)\,\omega_n\otimes\omega_n\big)$
(with $\omega_n = \nabla u_n/|\nabla u_n|$) that are themselves $n$-dependent and only as regular as $\nabla u_n$, so a freezing (paralinearisation) step is
needed before the commutator calculus applies. However, the global $L^p$ second commutation lemma provides the fractional pseudo-differential bounds required
to bypass this barrier. 

Using the fractional smoothing into $W^{\epsilon',p}(\R^d)$ ($\epsilon' < \epsilon$) established in Lemma \ref{lem:remainder}, future research can control
the commutators generated by the nonlinear terms. This provides harmonic analysis tools for establishing sharper microlocal localisation and
compactness-by-compensation principles for degenerate quasilinear equations.

\subsection{The Low-Regularity Range $1 < p < 2$}
The transport results of Section \ref{sec:applications} are proved for $p \ge 2$ (Theorem \ref{thm:bicharacteristic_first_order}), and this restriction is
not merely technical. The derivation relies on the canonical Nemyckij dual $v_n = \Phi_p(u_n)$ lying in $W^{1,p'}_{loc}$ with the pointwise chain rule
$\nabla v_n = (p-1)|u_n|^{p-2}\nabla u_n$; for $1 < p < 2$ the derivative $\Phi_p'$ is singular at the origin, and near a transversal zero of $u_n$ this
identity yields an $L^{p'}_{loc}$ function only when $p > \tfrac{1+\sqrt{5}}{2}$ (Remark \ref{rem:chainrange}). The genuinely Banach regime, in which the
dual sequence is merely H\"older continuous, is thus left open. A related but milder difficulty affects the $p$-Wave system of Section \ref{sec:wave_eq}:
the local energy identity of Lemma \ref{lemma:energy} was derived by differentiating $\Phi_p(U_n)$ and $\Phi_p(X_n)$ directly, which is unconditional only for
$p \ge 2$. There, however, the identity itself involves only the $C^1$ densities $|U_n|^p, |X_n|^p$ (all terms $L^1_{loc}$ for $p > 1$ at transversal zeros),
so, unlike the scalar dual, which requires $\nabla\Phi_p(u_n) \in L^{p'}_{loc}$ and hence $p > \tfrac{1+\sqrt5}{2}$, it is expected to survive
throughout $1 < p < \infty$ once the differentiation is justified by a nodal-set (defect) analysis. A single resolution of the low-regularity Nemyckij calculus
would place both applications on the same rigorous footing.

Resolving it will require a substitute for the chain rule adapted to the low-regularity dual. Promising routes include a defect analysis of the smooth
truncations $\Phi_p^\delta(s) = (s^2+\delta^2)^{(p-2)/2}s$ as $\delta \to 0$, controlling the contribution concentrated on the nodal set $\{u_n = 0\}$;
a restriction to sequences whose gradients do not concentrate on their nodal sets; or a renormalised dual that regularises the singularity of $\Phi_p$
while preserving the $L^p$--$L^{p'}$ duality pairing. Any of these would extend both the bicharacteristic flow of Theorem \ref{thm:bicharacteristic_first_order}
and the energy identity of Lemma \ref{lemma:energy} to the full range $1 < p < \infty$.

\subsection{Sharp Symbol Regularity via Littlewood--Paley}
The kernel-based proof of Lemma \ref{lem:remainder} in Appendix \ref{app:remainder} is deliberately elementary, and its price is the symbol-regularity threshold
$s > d+2$ of Assumption \ref{ass:symbol}, dictated by the pointwise Calder\'on--Zygmund kernel bounds (Remark \ref{rem:sobolev}). As anticipated in Remark
\ref{rem:sharper}, a frequency-space argument (decomposing the commutator by the Bony paraproduct calculus and estimating each dyadic block of
$\mathcal{A}_a$ through the H\"ormander--Mikhlin theorem) should establish the same order-$(-\epsilon)$ smoothing under the substantially weaker
hypothesis $s > \tfrac{d+4}{2}$, matching the H\"ormander boundedness threshold plus the two orders consumed by the second-order Taylor remainder.
Carrying out this paraproduct proof, and thereby identifying the sharp interplay between symbol regularity, the smoothing order $-\epsilon$, and the dimension
$d$, is a concrete open problem; it would in particular decide whether $s > \tfrac{d+4}{2}$ is optimal or can be lowered further towards the pure boundedness
threshold $\tfrac{d}{2}$.

\subsection{Propagation of Singularities in Fractional and Anomalous PDEs}

While classical microlocal analysis was developed for partial differential operators of integer order, modern physical models for anomalous diffusion,
non-Markovian stochastic processes, and non-local continuum mechanics increasingly rely on fractional differential operators, such as the fractional
Laplacian $(-\Delta)^s$. 

Extending the theory of H-distributions to fractional PDEs presents a significant challenge. Fractional operators are non-local; they do not satisfy the
standard local Leibniz rule, and commuting them with variable coefficients generates integral tails that extend to spatial infinity. Consequently, classical
commutation lemmas that rely on local geometric arguments fail to capture the asymptotic behavior of weakly converging sequences in these anomalous models. 

The functional analytic framework developed in this manuscript is suited to this problem. The second commutation lemma's remainder is an order-$(-\epsilon)$
smoothing operator (Lemma \ref{lem:remainder}), so its estimates map $L^p$ into the fractional Sobolev spaces $W^{\epsilon',p}(\R^d)$ ($\epsilon' < \epsilon$);
and the spatial truncation technique developed in Section \ref{sec:theorem_proof} is suited to the non-local tails generated by operators on the unbounded
domain. It is here, rather than in the transport applications of Sections \ref{sec:applications}--\ref{sec:wave_eq}, which localise exactly
(\S\ref{sec:comparison}), that the global reach of the lemma becomes indispensable: localising $[(-\Delta)^s, M_\varphi]$ leaves a kernel tail
spread over all of $\R^d$, so the pairing probes the coefficients at infinity and no merely local commutation lemma would suffice. We caution that
the second commutation lemma governs the degree-zero probe multipliers $\mathcal{A}_a$ rather than the fractional operator $(-\Delta)^s$ itself (a Fourier
multiplier of degree $2s$); the programme below commutes these probes through the variable coefficients of the fractional PDE.

Future research will use these fractional commutator bounds to advance beyond the existence and localisation theories established for H-distributions by
Antoni\'c, Erceg, and Mi\v{s}ur \cite{AntonicErcegMisur2021}. Specifically, by commuting pseudo-differential multipliers through the variable coefficients
of fractional PDEs, it will be possible to derive the transport equations that govern how H-distributions propagate through anomalous, non-local media,
establishing a geometric optics theory for non-local operators in Banach spaces.

\subsection{Metric-Adapted Scalings for Anisotropic Media}

As observed in Remark \ref{rem:anisotropic}, the derivation of transport equations for the quasilinear $p$-Wave system currently requires an isotropic scalar
restriction. This is because the standard canonical Nemyckij operator $\Phi_p(X) = |X|^{p-2}X$ is tied to the Euclidean metric. Attempting to define an
anisotropic dual operator $\Phi_{p,C}(X)$ entangles the spatial tensor $C(x)$ within the nonlinear topology, shielding it from the pseudo-differential
commutator and halting the extraction of the geometric refraction rays.

One avenue to resolve this anisotropic barrier is to alter the underlying projective geometry of the H-distribution itself. This strategy has been used
in other domains where standard microlocal scaling fails; for instance, Antoni\'c and Lazar established parabolic H-measures by replacing the standard
projection onto the unit sphere $\Sp^{d-1}$ with a projection onto a parabolic manifold $P^d$ to align with the anisotropic time-space scaling of the heat
operator \cite{AntonicLazar2013}.

To resolve anisotropic quasilinear systems, future research must investigate metric-adapted H-distributions. Rather than scaling along standard Euclidean rays,
the frequency space could be projected onto a variable, metric-dependent manifold (e.g., the characteristic variety defined by $C(x)\xi \cdot \xi = 1$).
By warping the projective scaling of the H-distribution to match the geometry of the acoustic tensor, the dual sequence might remain decoupled from the
spatial variations. Because this base manifold would vary with $x$ (a departure from the fixed-manifold constructions of Tartar and Antoni\'c--Lazar,
and one whose very feasibility is itself an open question), the programme is speculative; should the global $L^p$ second commutation lemma prove adaptable
to such metric-dependent projective manifolds, it would yield the full phase-space transport equations for anisotropic, non-Euclidean nonlinear media.

\subsection{Structure and Intrinsicness of $L^p$ H-Distributions}
Two questions underpin the transport theory developed here. The first concerns the admissible regularity of the coefficients. The zeroth-order coefficient
in Theorem \ref{thm:bicharacteristic_first_order} was required to be bounded and continuous ($c \in C_b$), so that it may be pulled into the H-distribution
pairing against the weak-$*$ limit of the associated $L^1$ densities. Whether the transport equations persist for rough coefficients ($c$ of
vanishing mean oscillation, or $c \in L^\infty$ endowed with additional structure) is open, and would connect the present framework to compensated
compactness with discontinuous coefficients.

The second question concerns the intrinsic nature of the microlocal object. Unlike an $L^2$ H-measure, which is a positive Radon measure canonically attached
to the sequence, the $L^p$ H-distribution ($p \neq 2$) lacks positivity and is a distribution of finite anisotropic order; moreover it was generated here
through a specific device, the canonical Nemyckij dual $\Phi_p(u_n)$. It is not known whether the resulting propagation law is \emph{intrinsic} (that is,
independent of the admissible dual sequence used to define $\mu$) or whether distinct pairings yield different transport. Establishing such
choice-independence would endow the $L^p$ theory with the coordinate-free invariance that Hilbert-space structure confers automatically in $L^2$, and is
arguably the central structural problem left open by this work.

\section*{Acknowledgements}
The author gratefully acknowledges Ljudevit Palle for their insightful comments and engaging discussions, which significantly contributed to the final version
of the manuscript.

\section*{Declarations}
\textbf{Funding:} The author declares that no funds, grants, or other support were received during the preparation of this manuscript. \\
\textbf{Conflict of interest:} The author declares that he has no conflict of interest. \\
\textbf{Data availability:} Data sharing is not applicable to this article as no datasets were generated or analysed during the current study.\\
\textbf{Generative AI and AI-assisted technologies:} During the preparation of this work, the author(s) used Google Gemini as an assistive tool to transcribe handwritten mathematical notes into \LaTeX{} formatting, to polish the English prose for readability, and to help draft the initial abstract and manuscript summary. Additionally, the AI was utilized during the research phase for conceptual exploration, specifically to search heuristically for potential counterexamples to stress-test preliminary hypotheses. After using this tool, the author(s) meticulously reviewed, verified, and edited all generated text and code. All mathematical claims, proofs, and counterexamples were independently rigorously verified by the human author(s). The author(s) take full intellectual responsibility for the final content of this publication, including all mathematical proofs, formatting, and conceptual framing.

\appendix

\section{Proof of the Non-smooth Symbolic Remainder Estimate}\label{app:remainder}

We prove Lemma \ref{lem:remainder} by elementary means (classical Calder\'on--Zygmund kernel bounds, Taylor's theorem, and the finite-difference
characterisation of Besov spaces), the sole external input being the $L^p$-boundedness of the commutator from Part I of Theorem \ref{thm:second_comm}
(itself resting on Calder\'on's first commutator theorem), invoked once in \S\ref{app:lp} to control the $L^p$ component of the Besov norm. Constants
denoted $C$ depend only on $d$, $p$, $\epsilon$ and $\norm{a}_{W^{s,2}(\Sp^{d-1})}$, and may change from line to line; we abbreviate
$[\nabla b]_\epsilon := [\nabla b]_{C^\epsilon(\R^d)}$ and write $\Delta_h f(x) := f(x+h) - f(x)$.

\subsection{Kernel bounds for the multiplier}\label{app:kernel}

By Assumption \ref{ass:symbol}, $a$ is homogeneous of degree zero with $a|_{\Sp^{d-1}} \in W^{s,2}(\Sp^{d-1})$, $s > d+2$; the symbol $i\xi_j a(\xi)$ of
$\partial_j\mathcal{A}_a$ is then homogeneous of degree $1$, and its convolution kernel $\partial_j k$ (with $k = \mathcal{F}^{-1}a$) is homogeneous of
degree $-d-1$ off the origin. To convert the symbol's Sobolev regularity into pointwise kernel bounds we use the Bochner--Stein--Weiss identity on
spherical harmonics \cite[Ch.~IV]{SteinWeiss1971}: for a solid harmonic $P_\ell$ of degree $\ell$,
\begin{equation*}
    \mathcal{F}\big[P_\ell(x)\,|x|^{-d-\ell+\alpha}\big](\xi) = \gamma_{\ell,\alpha}\,P_\ell(\xi)\,|\xi|^{-\ell-\alpha}, \qquad \gamma_{\ell,\alpha} \sim c\,\ell^{\,\alpha - d/2} \quad (\ell \to \infty).
\end{equation*}
We convert this into a Sobolev bookkeeping on the sphere. Since $m(\xi) := i\xi_j a(\xi)$ is homogeneous of degree $1$, its restriction to the sphere factorises
as $m(\xi) = |\xi|\,\tilde m(\xi/|\xi|)$ with $\tilde m := m|_{\Sp^{d-1}} = \sum_{\ell \ge 0} M_\ell$, where $M_\ell$ is the degree-$\ell$ spherical-harmonic
component; multiplication by the coordinate $\xi_j$ (itself a degree-one spherical harmonic) is bounded on every $W^{\tau,2}(\Sp^{d-1})$, so
$\norm{\tilde m}_{W^{\tau,2}(\Sp^{d-1})} \le C\norm{a}_{W^{\tau,2}(\Sp^{d-1})}$ for all $\tau \le s$. Applying the identity above term by term with
$\alpha = -1$ (so that the degree-$(-d+\alpha) = -(d+1)$ homogeneous piece $M_\ell(z/|z|)\,|z|^{-d-1}$ has Fourier transform
$\gamma_{\ell,-1}\,M_\ell(\xi/|\xi|)\,|\xi|$, and inverting, the degree-$1$ symbol $M_\ell(\xi/|\xi|)|\xi|$ has kernel
$\gamma_{\ell,-1}^{-1}M_\ell(z/|z|)|z|^{-d-1}$), the convolution kernel acquires the angular expansion
\begin{equation*}
    \partial_j k(z) = |z|^{-d-1}\,\Theta(z/|z|), \qquad \Theta = \sum_{\ell \neq 1} \gamma_{\ell,-1}^{-1}\, M_\ell,
\end{equation*}
the term $\ell = 1$ being omitted: there $\gamma_{\ell,-1}$ degenerates, but the corresponding component of $\tilde m$ is linear in $\xi$, a polynomial whose
inverse Fourier transform is a combination of first-order derivatives of $\delta$ supported at the origin and hence invisible to the off-origin kernel.
Because $\gamma_{\ell,-1}^{-1} \sim c\,\ell^{\,d/2+1}$, the Laplace--Beltrami characterisation
$\norm{f}_{W^{\sigma,2}(\Sp^{d-1})}^2 \simeq \sum_\ell (1+\ell)^{2\sigma}\norm{Y_\ell}_{L^2}^2$ (with $Y_\ell$ the harmonic components of $f$) gives
\begin{equation*}
    \norm{\Theta}_{W^{\sigma,2}(\Sp^{d-1})} \le C\,\norm{\tilde m}_{W^{\sigma + \frac{d}{2}+1,\,2}(\Sp^{d-1})} \le C\,\norm{a}_{W^{\sigma + \frac{d}{2}+1,\,2}(\Sp^{d-1})},
\end{equation*}
so passing from the symbol's spherical data to the kernel's costs exactly $\tfrac{d}{2} + 1$ derivatives in the $W^{\sigma,2}(\Sp^{d-1})$ scale.
Landing the angular profile $\Theta$ in $C^0(\Sp^{d-1})$ through the embedding $W^{\sigma,2}(\Sp^{d-1}) \hookrightarrow C^0(\Sp^{d-1})$
($\sigma > \tfrac{d-1}{2}$) therefore requires $s - (\tfrac{d}{2} + 1) > \tfrac{d-1}{2}$, i.e.\ $s > d + \tfrac{1}{2}$, and one further tangential derivative
for $\Theta \in C^1(\Sp^{d-1})$ (needed for the gradient bound), i.e.\ $s > d + \tfrac{3}{2}$; both are secured by Assumption \ref{ass:symbol}.
Since $\partial_j k$ is homogeneous of degree $-d-1$ and $\nabla\partial_j k$ of degree $-d-2$, this yields, for $z \neq 0$,
\begin{equation}\label{eq:app-kernel}
    |\partial_j k(z)| \le C\,|z|^{-d-1}, \qquad |\nabla \partial_j k(z)| \le C\,|z|^{-d-2}.
\end{equation}
Moreover, for each $l$ the function $z \mapsto z_l\,\partial_j k(z)$ is homogeneous of degree $-d$ and, by the computation \eqref{eq:app-moment} below,
is the Calder\'on--Zygmund kernel of a bounded operator on $L^p(\R^d)$.

\subsection{The exact kernel identity}\label{app:identity}

We first normalise the symbol. Writing $c_0 := |\Sp^{d-1}|^{-1}\int_{\Sp^{d-1}}a\,\rd\sigma$ for its spherical mean,
$k = \mathcal{F}^{-1}a = c_0\,\delta + \mathrm{p.v.}\,k_0$, where $k_0 = \mathcal{F}^{-1}(a - c_0)$ is a Calder\'on--Zygmund kernel homogeneous of
degree $-d$ with vanishing spherical mean. Since $[\mathcal{A}_a, M_b]$, $S_j$, and hence $R_j(b)$ are all invariant under $a \mapsto a - c_0$
(constants commute with $M_b$, and $\partial_{\xi_k}(a-c_0) = \partial_{\xi_k}a$), we assume henceforth $c_0 = 0$, so that $k = \mathrm{p.v.}\,k_0$;
every display below is then a genuine principal-value integral of the function $k_0$, and \eqref{eq:app-moment} is an identity of tempered
distributions: in particular the moment operator $P_l$ has exactly the symbol $\delta_{jl}a + \xi_j\partial_{\xi_l}a$, any local (delta)
contribution of the distributional product $z_l\,\partial_j k$ being already encoded in that symbol.

Let $u \in C_c^\infty(\R^d)$. Since $\mathcal{A}_a u = k * u$,
\begin{equation*}
    [\mathcal{A}_a, M_b]u(x) = \int_{\R^d} k(x-y)\big(b(y) - b(x)\big) u(y)\,\rd y,
\end{equation*}
the integral being absolutely convergent near the diagonal because $|k(x-y)| \le C|x-y|^{-d}$ and $|b(y)-b(x)| \le \norm{\nabla b}_\infty |x-y|$.
Differentiating in $x_j$,
\begin{equation}\label{eq:app-diff}
    \partial_j[\mathcal{A}_a, M_b]u(x) = \mathrm{p.v.}\!\int (\partial_j k)(x-y)\big(b(y)-b(x)\big)u(y)\,\rd y \;-\; \partial_j b(x)\,\mathcal{A}_a u(x).
\end{equation}
Write $b(y) - b(x) = \nabla b(x)\cdot(y-x) + r(x,y)$, where the Taylor remainder satisfies
\begin{equation}\label{eq:app-taylor}
    r(x,y) = \int_0^1 \big[\nabla b(x + t(y-x)) - \nabla b(x)\big]\cdot(y-x)\,\rd t, \qquad |r(x,y)| \le \tfrac{[\nabla b]_\epsilon}{1+\epsilon}\,|x-y|^{1+\epsilon}.
\end{equation}
For the linear part, we compute the Fourier multiplier of the convolution kernel $z_l\,\partial_j k(z)$:
\begin{equation}\label{eq:app-moment}
    \mathcal{F}\big[z_l\,\partial_j k\big](\xi) = i\,\partial_{\xi_l}\mathcal{F}[\partial_j k](\xi) = i\,\partial_{\xi_l}\big(i\xi_j a(\xi)\big) = -\big(\delta_{jl}\,a(\xi) + \xi_j\,\partial_{\xi_l}a(\xi)\big).
\end{equation}
Setting $P_l u(x) := \mathrm{p.v.}\!\int (\partial_j k)(x-y)(y_l - x_l)u(y)\,\rd y = -\big((z_l\,\partial_j k)*u\big)(x)$,
identity \eqref{eq:app-moment} gives $P_l = \operatorname{Op}\big(\delta_{jl} a + \xi_j\partial_{\xi_l}a\big)$, whence
\begin{equation}\label{eq:app-linear}
    \sum_{l=1}^d \partial_{x_l}b(x)\,P_l u(x) = \partial_j b(x)\,\mathcal{A}_a u(x) + \sum_{l=1}^d \partial_{x_l}b(x)\,\operatorname{Op}(\xi_j\partial_{\xi_l}a)u(x) = \partial_j b(x)\,\mathcal{A}_a u(x) + S_j u(x).
\end{equation}
Substituting the split of $b(y)-b(x)$ into \eqref{eq:app-diff} and using \eqref{eq:app-linear},
\begin{equation*}
    \partial_j[\mathcal{A}_a, M_b]u = \big(\partial_j b\,\mathcal{A}_a u + S_j u + \mathcal{R}u\big) - \partial_j b\,\mathcal{A}_a u = S_j u + \mathcal{R}u,
\end{equation*}
where $\mathcal{R}u(x) := \int (\partial_j k)(x-y)\,r(x,y)\,u(y)\,\rd y$ is absolutely convergent by \eqref{eq:app-kernel}--\eqref{eq:app-taylor}.
Therefore $R_j(b) = \mathcal{R}$ on $C_c^\infty(\R^d)$, which is the kernel representation asserted in the lemma; the operator
$\mathcal{R}$ (defined by this absolutely convergent integral on $C_c^\infty$) extends to a bounded operator on $L^p(\R^d)$ by the
estimate of \S\ref{app:lp} (the pointwise kernel integral need not converge absolutely for general $u \in L^p$: the size bound
$|x-y|^{-d+\epsilon}$ is locally integrable but is \emph{never} globally in $L^{p'}_y(\R^d)$, its far-field tail lying in $L^{p'}$
only for $p < d/\epsilon$ and its local singularity only for $p > d/\epsilon$, so no single global H\"older estimate applies and
absolute convergence is unavailable), and $R_j(b) = \mathcal{R}$ holds throughout by density.
Writing $\mathcal{K}(x,y) := (\partial_j k)(x-y)\,r(x,y)$ for its kernel, \eqref{eq:app-kernel} and \eqref{eq:app-taylor} give the global size bound
\begin{equation}\label{eq:app-size}
    |\mathcal{K}(x,y)| \le C\,[\nabla b]_\epsilon\,|x-y|^{-d+\epsilon}, \qquad x \neq y.
\end{equation}

\subsection{$L^p$ boundedness}\label{app:lp}

By Part I of Theorem \ref{thm:second_comm}, $\partial_j[\mathcal{A}_a, M_b]$ is bounded on $L^p(\R^d)$ with norm $\le C\norm{\nabla b}_\infty$.
The operator $S_j = \sum_l (\partial_{x_l}b)\,\operatorname{Op}(\xi_j\partial_{\xi_l}a)$ is a finite sum of Calder\'on--Zygmund
operators (bounded on $L^p$ by the H\"ormander multiplier theorem, since each $\xi_j\partial_{\xi_l}a$ is homogeneous of degree zero and
inherits the required spherical regularity from Assumption \ref{ass:symbol}), followed by multiplication by $\partial_{x_l}b \in L^\infty$. Hence
\begin{equation}\label{eq:app-lpbound}
    \norm{R_j(b)}_{\mathcal{L}(L^p)} \le \norm{\partial_j[\mathcal{A}_a, M_b]}_{\mathcal{L}(L^p)} + \norm{S_j}_{\mathcal{L}(L^p)} \le C\norm{\nabla b}_\infty \le C\norm{\nabla b}_{C^\epsilon}.
\end{equation}

\subsection{The fractional smoothing estimate}\label{app:smoothing}

For $0 < \epsilon < 1$ the Besov space $B^\epsilon_{p,\infty}(\R^d)$ is characterised, up to norm equivalence, by \cite[\S 2.5.12]{Triebel1983}
\begin{equation*}
    \norm{f}_{B^\epsilon_{p,\infty}} \simeq \norm{f}_{L^p} + \sup_{0 < |h| \le 1} |h|^{-\epsilon}\norm{\Delta_h f}_{L^p}.
\end{equation*}
In view of \eqref{eq:app-lpbound}, the boundedness $R_j(b): L^p \to B^\epsilon_{p,\infty}$ follows once we prove
\begin{equation}\label{eq:app-goal}
    \norm{\Delta_h\, R_j(b)u}_{L^p} \le C\,[\nabla b]_\epsilon\,|h|^\epsilon\,\norm{u}_{L^p}, \qquad 0 < |h| \le 1.
\end{equation}
Since $\Delta_h R_j u(x) = \int G_h(x,y)u(y)\,\rd y$ with $G_h(x,y) = \mathcal{K}(x+h,y) - \mathcal{K}(x,y)$, we split the $y$-integration at the scale $|h|$.

\medskip
\emph{Near region $\{|x-y| \le 2|h|\}$.} Here we use the size bound \eqref{eq:app-size} for each term of $G_h$.
Since $\{|x-y|\le 2|h|\} \subset \{|x+h-y| \le 3|h|\}$,
\begin{equation*}
    \sup_x \int_{|x-y|\le 2|h|} |G_h(x,y)|\,\rd y \le C[\nabla b]_\epsilon \int_{|z|\le 3|h|} |z|^{-d+\epsilon}\,\rd z = C[\nabla b]_\epsilon\,|h|^\epsilon,
\end{equation*}
and by symmetry the same bound holds for $\sup_y \int_{|x-y|\le 2|h|}|G_h(x,y)|\,\rd x$.
Schur's test yields $\norm{N_h}_{\mathcal{L}(L^p)} \le C[\nabla b]_\epsilon|h|^\epsilon$, where $N_h$ denotes the near part.

\medskip
\emph{Far region $\{|x-y| > 2|h|\}$.} Write the far part as $F_h = F_h^{\mathrm{I}} + F_h^{\mathrm{II}}$ according to
\begin{equation*}
    G_h(x,y) = \underbrace{\big[(\partial_j k)(x+h-y) - (\partial_j k)(x-y)\big]\,r(x+h,y)}_{\text{(I)}} + \underbrace{(\partial_j k)(x-y)\,\big[r(x+h,y) - r(x,y)\big]}_{\text{(II)}}.
\end{equation*}
For (I): when $|x-y| > 2|h|$ the segment $[x-y,\,x+h-y]$ avoids the origin, with $|x-y+th| \ge \tfrac12|x-y|$, so the mean value theorem and
\eqref{eq:app-kernel} give $|(\partial_j k)(x+h-y)-(\partial_j k)(x-y)| \le C|h|\,|x-y|^{-d-2}$; together with
$|r(x+h,y)| \le C[\nabla b]_\epsilon|x-y|^{1+\epsilon}$ (from \eqref{eq:app-taylor} and $|x+h-y|\le\tfrac32|x-y|$) this yields
$|\text{(I)}| \le C[\nabla b]_\epsilon|h|\,|x-y|^{-d-1+\epsilon}$. Hence
\begin{equation*}
    \sup_x \int_{|x-y|>2|h|} |\text{(I)}|\,\rd y \le C[\nabla b]_\epsilon\,|h|\int_{|z|>2|h|}|z|^{-d-1+\epsilon}\,\rd z = C[\nabla b]_\epsilon\,|h|\cdot|h|^{-1+\epsilon} = C[\nabla b]_\epsilon\,|h|^\epsilon,
\end{equation*}
and symmetrically in $x$; Schur's test gives $\norm{F_h^{\mathrm{I}}}_{\mathcal{L}(L^p)} \le C[\nabla b]_\epsilon|h|^\epsilon$.

For (II) we use the algebraic identity
\begin{equation}\label{eq:app-rincrement}
    r(x+h,y) - r(x,y) = c_h(x) - \Delta_h\nabla b(x)\cdot(y-x), \qquad c_h(x) := \nabla b(x+h)\cdot h - \big[b(x+h)-b(x)\big],
\end{equation}
where $|c_h(x)| \le \tfrac{1}{1+\epsilon}[\nabla b]_\epsilon|h|^{1+\epsilon}$ (Taylor's theorem applied between $x$ and $x+h$) and
$|\Delta_h\nabla b(x)| \le [\nabla b]_\epsilon|h|^\epsilon$. Accordingly, on the far region
$F_h^{\mathrm{II}} = c_h\cdot\mathcal{T}_h u + \sum_l (\Delta_h\partial_l b)\, P_l^{(h)}u$, where
\begin{equation*}
    \mathcal{T}_h u(x) := \int_{|x-y|>2|h|}(\partial_j k)(x-y)u(y)\,\rd y, \qquad P_l^{(h)}u(x) := \int_{|x-y|>2|h|}(\partial_j k)(x-y)(x_l - y_l)u(y)\,\rd y.
\end{equation*}
The operators $P_l^{(h)}$ are truncations of the Calder\'on--Zygmund operator $-P_l$ of \eqref{eq:app-moment} (the sign immaterial to what follows),
whose kernel $z_l\,\partial_j k_0$ is homogeneous of degree $-d$ and satisfies the full Calder\'on--Zygmund package: the size bound
$|z_l\partial_j k_0(z)| \le C|z|^{-d}$ and gradient bound $|\nabla(z_l\partial_j k_0)(z)| \le C|z|^{-d-1}$ (from \eqref{eq:app-kernel}),
a vanishing spherical mean (its symbol $\delta_{jl}a + \xi_j\partial_{\xi_l}a$ is bounded, precluding a logarithm), and $L^2$-boundedness
(bounded symbol). By Cotlar's inequality each truncation is dominated pointwise by the maximal singular integral, bounded on $L^p$,
so $\sup_{h}\norm{P_l^{(h)}}_{\mathcal{L}(L^p)} \le C$ (Grafakos \cite[Theorem 5.3.5 and Corollary 5.3.7]{Grafakos2014}; see also Stein
\cite[Chapter VI]{Stein1993}). Therefore
\begin{equation*}
    \Big\| \sum_l (\Delta_h\partial_l b)\, P_l^{(h)}u \Big\|_{L^p} \le \norm{\Delta_h\nabla b}_{L^\infty}\sum_l \norm{P_l^{(h)}u}_{L^p} \le C[\nabla b]_\epsilon|h|^\epsilon\norm{u}_{L^p}.
\end{equation*}
For the remaining term, $\mathcal{T}_h$ is convolution with $(\partial_j k)\mathbf{1}_{\{|z|>2|h|\}} \in L^1(\R^d)$, whose $L^1$ norm is
$\le C\int_{|z|>2|h|}|z|^{-d-1}\rd z = C|h|^{-1}$ by \eqref{eq:app-kernel}; Young's inequality gives $\norm{\mathcal{T}_h}_{\mathcal{L}(L^p)} \le C|h|^{-1}$.
Combined with the bound on $c_h$,
\begin{equation*}
    \norm{c_h\cdot\mathcal{T}_h u}_{L^p} \le \norm{c_h}_{L^\infty}\norm{\mathcal{T}_h u}_{L^p} \le C[\nabla b]_\epsilon|h|^{1+\epsilon}\cdot|h|^{-1}\norm{u}_{L^p} = C[\nabla b]_\epsilon|h|^\epsilon\norm{u}_{L^p}.
\end{equation*}
Here the order-one blow-up $|h|^{-1}$ of the truncated singular integral $\mathcal{T}_h$ is compensated exactly by the second-order smallness $|h|^{1+\epsilon}$
of $c_h$; this is the sole point where the two derivatives of regularity carried by $b \in C^{1,\epsilon}$ are consumed.

Collecting the near part $N_h$ together with $F_h^{\mathrm{I}}$ and $F_h^{\mathrm{II}}$ establishes \eqref{eq:app-goal}. With \eqref{eq:app-lpbound} this
proves $R_j(b): L^p(\R^d) \to B^\epsilon_{p,\infty}(\R^d)$ with the stated norm bound. The continuous embedding
$B^\epsilon_{p,\infty}(\R^d) \hookrightarrow B^{\epsilon'}_{p,p}(\R^d) = W^{\epsilon',p}(\R^d)$ for every $\epsilon' \in (0,\epsilon)$
\cite[\S 2.3.2]{Triebel1983} then completes the proof of Lemma \ref{lem:remainder}.

\begin{remark}[On the target regularity]\label{rem:target}
The finite-difference argument delivers the endpoint Besov space $B^\epsilon_{p,\infty}$, hence the Sobolev--Slobodeckij space $W^{\epsilon',p}$
for every $\epsilon' < \epsilon$. This suffices in full for the compactness argument of Section \ref{sec:theorem_proof}, which only requires the
strictly positive fractional gain $\epsilon' > 0$ to activate the Rellich--Kondrachov embedding. Landing in the exact space
$W^{\epsilon,p} = B^\epsilon_{p,p}$ would require the $p$-summable refinement of \eqref{eq:app-goal} over dyadic $|h|$, which we do not pursue.
\end{remark}

\end{document}